        \crefname{subsection}{Subsection}{Subsections}
        \crefname{subsection}{Subsection}{Subsections}
        \tikzset{every picture/.style=thick}
        \tikzset{state/.style = {shape=circle,draw,inner sep=1pt, outer sep=1pt,minimum size=.5cm}}
        \tikzset{vertex/.style = {shape=circle,draw,inner sep=1pt, outer sep=1pt}}
        \tikzset{edge/.style = {->, >={latex[scale=1.5]}}}
    \theoremstyle{plain}
        \newtheorem{theorem}{Theorem}[section]
        \newtheorem{corollary}[theorem]{Corollary}
        \newtheorem{lemma}[theorem]{Lemma}
        \newtheorem{proposition}[theorem]{Proposition}
\newenvironment{thm}[1]
  {\innercustomthm}
  {\endinnercustomthm}
\newenvironment{thms}[1]
  {\innercustomthms}
  {\endinnercustomthms}
    \theoremstyle{definition}
        \newtheorem{definition}[theorem]{Definition}
        \newtheorem{example}[theorem]{Example}
    \theoremstyle{remark}
        \newtheorem{remark}[theorem]{Remark}
        \newtheorem{claim}[theorem]{Claim}
    \title{Limit spaces of vertex and edge replacement systems}
    \author{Davide Perego \and Matteo Tarocchi}
    \date{}
    \thanks{
    The first author is supported by the Swiss Government Excellence Scholarship and from Swiss NSF grant 200020-200400.
    The second author is supported by the French project GoFR (ANR-22-CE40-0004) and by a public grant from the Fondation Mathématique Jacques Hadamard.
    Both authors acknowledge support from the research grant PID2022-138719NA-I00 (Proyectos de Generación de Conocimiento 2022) financed by the Spanish Ministry of Science and Innovation.
    Both the authors are members of the Gruppo Nazionale per le Strutture Algebriche, Geometriche e le loro Applicazioni (GNSAGA) of the Istituto Nazionale di Alta Matematica (INdAM)
    }
    \address{Section de mathématiques, Université de Genève, rue du Conseil-Général 7-9, 1205 Genève, Switzerland}
    \email{\href{mailto:dperego9@gmail.com}{dperego9@gmail.com}}
    \address{Université Paris-Saclay, CNRS, Laboratoire de mathématiques d’Orsay, Orsay, France, EU \& Université de Rennes, CNRS, IRMAR, Rennes, France, EU}
    \email{\href{mailto:matteo.tarocchi.math@gmail.com}{matteo.tarocchi.math@gmail.com}}
    \newcommand{\col}[0]{\mathsf{c}}
    \newcommand{\type}[0]{\mathsf{t}}
    \newcommand{\Shift}[0]{\mathbb{S}}
    \newcommand{\Lang}[0]{\mathbb{L}}
    \newcommand{\Alphabet}[0]{\mathbb{A}}
    \newcommand{\dist}[0]{\mathrm{d}}
    \newcommand{\AT}[0]{\mathcal{H}}
    \newcommand{\V}[0]{\mathbf{V}}
    \newcommand{\Pred}[0]{\mathrm{P}}
    \newcommand{\bary}[1]{#1^\mathrm{b}}
    \newcommand{\R}[0]{\mathcal{R}}
    \newcommand{\Autom}[0]{\mathscr{A}}
    \newcommand{\BasilicaSpc}[0]{\mathcal{B}}
    \newcommand{\BasilicaGp}[0]{\mathrm{B}}
    \newcommand{\Grig}[0]{\mathrm{G}}
    \newcommand{\ERS}[0]{\mathcal{E}}
    \newcommand{\E}[0]{\mathsf{E}}
    \newcommand{\vtype}[0]{\mathsf{v}}
    \newcommand{\ST}[0]{\mathcal{ST}}
    \newcommand{\Crit}[0]{\mathrm{Crit}}
    \newcommand{\PCrit}[0]{\mathrm{PCrit}}
    \newcommand{\PCritColor}[4]{\begin{psmallmatrix}#1&#2\\#3&#4\end{psmallmatrix}}
\begin{document}

\begin{abstract}
We introduce and study VERSs (vertex and edge replacement systems) as a technology of graph expansions.
We consider its history graph, an augmented tree that records each graph expansion, and we provide sufficient conditions under which it is hyperbolic.
When hyperbolic, its Gromov boundary is what we call the limit space of the VERS.
We provide three examples from different areas of mathematics:
Schreier graphs and limit spaces of finitely generated contracting self-similar groups, injective post-critically finite iterated function systems and limit spaces of edge replacement systems.
\end{abstract}

    %


\maketitle


\section{Introduction}

Within the literature, plenty of recursively constructed graphs and topological spaces have been developed in multiple areas of mathematics (see for instance the three examples of this paper, briefly described later in this Introduction).
Such recursive structures are often handled on a case-by-case scenario and are not explicitly described in their generality.
The aim of this manuscript is to provide a general framework that is capable of describing many such constructions.

\subsection{Vertex and edge replacement systems}

The framework that we describe is that of what we call a \textbf{VERS}:
a \textit{vertex and edge replacement system}, which we introduce in \cref{sec:VERS}.
Essentially, a VERS consists of a starting bouquet of loops together with a finite set of rules for expanding vertices and edges according to their \textbf{type} and \textbf{color}, respectively.
When expanding, each vertex $v$ is replaced by other vertices of certain types according to the type of $v$, while each edge (say joining vertices $u$ and $v$), according to its color, is replaced by edges joining vertices generated by the expansions of $u$ and $v$.

As discussed in \cref{sec:limit:spaces}, each VERS has a \textbf{history graph}.
This is an \textbf{augmented tree}:
it has a vertical tree structure together with a horizontal structure on each level of the tree;
the $n$-th level of the history graph depicts the $n$-th expansion of the starting bouquet of loops at the root.
When the history graph of a VERS is \textbf{hyperbolic}, we define the \textbf{limit space} of the VERS as the Gromov boundary of the history graph.
We give conditions under which we say that a VERS is \textbf{expanding}, which ensure hyperbolicity:

\begin{thm}{\ref{thm:expanding:VERS:hyperbolic}}
If a VERS is expanding, then its history graph is hyperbolic.
\end{thm}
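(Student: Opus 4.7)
The plan is to reduce hyperbolicity of the history graph to a combinatorial condition on ancestors of horizontally adjacent vertices, following the philosophy developed by Kaimanovich and others for augmented trees arising from self-similar dynamics. The criterion I would aim for is the following \emph{ancestor-closeness bound}: there exists a uniform constant $N$, depending only on the replacement rules of the VERS, such that whenever two vertices $u$ and $v$ of the history graph at a common level $n$ are connected by a horizontal edge, their ancestors at any earlier level $n-k$ remain at horizontal distance bounded by a function of $k$ alone, growing at most linearly in $k$ with slope controlled by $N$.

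To establish such a bound, I would analyze how horizontal adjacencies propagate through one expansion step. When a vertex $v$ is replaced, the horizontal edges among its offspring split into two kinds: those internal to the replacement graph prescribed by $v$'s type, and those obtained by refining the old edges incident to $v$ according to their colors. The expanding hypothesis should be phrased precisely so that iterated application of these rules cannot indefinitely sustain ``long-range'' horizontal connections: in the symbolic dynamics of types and colors, no infinite chain of adjacencies can survive without eventually forcing an identification or near-identification of ancestors. I would then proceed by induction on the level, carefully tracking how a horizontal path of length $\ell$ at level $n$ constrains its endpoints' ancestors at level $n-1$.

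With the ancestor-closeness bound in hand, hyperbolicity follows by a standard Gromov-product estimate on augmented trees. For any three vertices $x, y, z$, the Gromov products computed in the full history graph differ from those computed in the vertical tree alone only by an $O(N)$ additive error, because any horizontal shortcut at depth $n$ can be replaced by a short vertical detour through nearby common ancestors. Since the vertical tree is $0$-hyperbolic, the four-point inequality then holds with a $\delta$ depending only on $N$. I would invoke the existing literature on augmented-tree hyperbolicity here rather than redo the estimate from scratch.

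The hardest step is almost certainly the ancestor-closeness bound itself. The subtlety is that a VERS involves two coupled replacement dynamics — on vertices by type and on edges by color — and the edge replacements can introduce new vertices of arbitrary types depending on the endpoints. Tracking a horizontal path through several expansion steps therefore requires simultaneous combinatorial control over both the type and color data, and the induction must disentangle internal contributions (from a single vertex's expansion) from cross-contributions (from edge refinements). The expanding hypothesis is presumably tailored exactly to tame this interaction, so the technical heart of the argument will be in verifying that it does so uniformly across all combinations of types and colors.
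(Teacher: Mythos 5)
There is a genuine gap, and it lies in both halves of your plan. Your central ``ancestor-closeness bound'' is vacuous: in any augmented tree, lifting a horizontal edge yields a horizontal edge (or a loop) between the immediate predecessors, so the ancestors at level $n-k$ of two horizontally adjacent vertices are always at horizontal distance at most $1$ (this is exactly \cref{rmk:lift:distance}). A bound growing linearly in $k$ is therefore satisfied by \emph{every} augmented tree, hyperbolic or not, and cannot serve as a hyperbolicity criterion. What actually needs to be controlled is the opposite direction: how long horizontal geodesics can persist as one descends, which is the content of Kaimanovich's ``no big squares'' condition (\cref{thm:Kaimano}). Relatedly, your concluding step is false: Gromov products in the history graph do \emph{not} differ from those in the vertical tree by a bounded additive error, because a single horizontal edge can join two vertices whose tree distance is $2n$; if your claim held, the boundary of the history graph would coincide with the boundary of the tree (a Cantor set), contradicting the very examples the construction is designed for (the Sierpi\'nski triangle, the Basilica Julia set, the interval). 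Finally, you never actually use the expanding hypothesis --- you only say it is ``presumably tailored'' to make the induction work --- so the technical heart of the argument is missing rather than merely hard.

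For comparison, the paper's proof is short and direct: it first shows (\cref{lem:squares:lift}) that the existence of a geodesic $(n+1)$-square forces a geodesic $n$-square, by lifting the bottom horizontal geodesic level by level and checking the lifts stay geodesic of full length; hence it suffices to exclude squares of one fixed size. Then, given a hypothetical geodesic $n$-square, the spanning lifts of the edges of its bottom geodesic $B$ form a path of length $n$ between the $n$-fold predecessors of the endpoints of $B$, and $B$ itself is a length-$n$ geodesic between $n$-th level successors of those endpoints, appearing in the $n$-th expansion of that path --- directly contradicting \cref{def:expanding:VERS}. Hyperbolicity then follows from \cref{thm:Kaimano}. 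If you want to salvage your approach, replace the ancestor bound by a statement about descendants of horizontally close vertices eventually separating, and you will find yourself reproving exactly this square-exclusion argument.
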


Notions of expansivity are common throughout the literature.
Essentially, the one that we provide for VERSs requires that iteratively expanding a geodesic of any possible shape (i.e., with any combination of edge colors and orientation) eventually increases its length.
Since not every geodesic necessarily appears among the VERS expansions, this is not a necessary condition for the hyperbolicity of the history graph (\cref{rmk:expanding:not:iff}).

In \cref{sec:self:similar:groups,,sec:IFS,,sec:ERSs} we will use VERSs to describe recursively generated graphs and their limit spaces in three main families of examples, each belonging to a distinct area of mathematics.

\subsection{Schreier graphs and limit spaces of self-similar groups (\cref{sec:self:similar:groups})}

Contracting self-similar groups (\cite[Section 2.11]{NekSSG}) make up a well-known class of groups that provides various examples of Burnside groups and groups of intermediate growth, together with deep connections with automata theory, dynamical systems \cite{GLN}, spectral graph theory \cite{GLN2, BGJRT} and other areas.
As they act on rooted trees, there is a natural interest towards the finite Schreier graphs for their actions on each level \cite{Bondarenko,BDN}.
Related to such graphs, limit spaces of contracting self-similar groups were defined by Nekrashevych in \cite{LimitSets}.

We will build a VERS $\R(G,S)$ associated to any contracting self-similar group $G$ generated by some finite $S \subseteq G$.

\begin{thm}{\ref{thm:self:similar:limit:sets}}
The history graph of $\R(G,S)$ is the augmented tree described in \cite{LimitSets}.
In particular, it is hyperbolic and the limit space of the VERS is the limit space of $G$.
\end{thm}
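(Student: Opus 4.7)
The plan is to identify the construction $\R(G,S)$ level-by-level with the augmented tree built by Nekrashevych in \cite{LimitSets}. Recall that the latter has vertex set $\bigsqcup_{n \geq 0} X^n$, where $X$ is the alphabet on which $G$ acts self-similarly; vertical (tree) edges are given by the prefix relation, and horizontal edges at level $n$ form the Schreier graph of $G$ on $X^n$ with respect to $S$.

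First, I would verify the announced description of $\R(G,S)$: take a single vertex type $\vtype$, one edge color $\col_s$ for each $s \in S$, and a base bouquet consisting of one $\vtype$-vertex carrying a $\col_s$-loop for each $s \in S$ (i.e. the Schreier graph on $X^0$). The vertex expansion rule replaces a $\vtype$-vertex with $|X|$ new $\vtype$-vertices indexed by $X$. The edge replacement rule for color $\col_s$ is dictated by the wreath recursion $s = \sigma_s(s|_{x})_{x \in X}$: an edge of color $\col_s$ from $u$ to $v$ becomes, for each $x \in X$, an edge of color $\col_{s|_x}$ joining the $x$-copy of $u$ to the $\sigma_s(x)$-copy of $v$.

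Next I would prove by induction on $n$ that the $n$-th level of the history graph of $\R(G,S)$ coincides with the Schreier graph of $G$ on $X^n$. The base case is built in. For the inductive step, the edge rule described above is precisely the self-similarity relation $s(xw) = \sigma_s(x)\, s|_x(w)$, so an $s$-edge from $w$ to $s(w)$ at level $n$ produces exactly the $s$-edges incident to the $|X|$ words $xw$ at level $n+1$. The augmenting (vertical) edges of Nekrashevych's tree connect each $w \in X^n$ to its extensions $xw \in X^{n+1}$, which matches the parent-child relation in the history graph coming from the vertex expansion rule.

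Once the two augmented trees are identified, the hyperbolicity and the identification of the Gromov boundary with the limit space of $G$ follow directly from the results of \cite{LimitSets}. The main technical obstacle will be bookkeeping: making the orientation, endpoint and color conventions of the edge replacement rule agree with those used to define VERSs in \cref{sec:VERS}, and handling loops or multiple edges correctly when some $s|_x$ fixes $x$ or when distinct generators induce the same permutation on $X$; the latter may require careful choice of the color set indexing to avoid identifying distinct $S$-edges spuriously.
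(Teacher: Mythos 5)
Your outline has the wreath recursion running in the wrong direction in the replacement rule, and this breaks the level-by-level induction that is the heart of the proof. In the VERS $\R(G,S)$ of the paper (and this is what is actually needed), the replacement graph $R_g$ for a $g$-colored edge contains, for every $h \in S$ and $x,y \in \Alphabet$ with $\sigma_h(x)=y$ and $h_x=g$, an $h$-colored edge $x\mathrm{i} \to y\mathrm{t}$: the new edges are colored by the generators $h$ whose restriction at the new letter equals $g$, so the spanning lift of an $h$-edge is colored by the restriction $h_x$. Your rule is the opposite: you expand an $s$-edge into $s|_x$-colored edges from the $x$-copy of $u$ to the $\sigma_s(x)$-copy of $v$. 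Since the tree grows at the same end at which the group reads letters (children of $v$ are $xv$, and $h(xu)=\sigma_h(x)\,h_x(u)$), the $h$-edges at level $n+1$ are determined by the $h_x$-edges at level $n$, not by the $h$-edges; your inductive-step sentence that an $s$-edge at $w$ ``produces exactly the $s$-edges incident to the words $xw$'' is therefore false — those $s$-edges come from the $s|_x$-edge at $w$. Concretely, for the Basilica generators $a=(b,1)$ with trivial root permutation and $b$ with $\sigma_b=(0\;1)$, $b|_0=a$, $b|_1=1$, your rule gives at level one a $b$-loop at $0$ and an $a$-edge from $0$ to $1$, whereas the Schreier graph on $\Alphabet=\{0,1\}$ has $a$-loops at $0$ and $1$ and $b$-edges between them; so with your rule the identification with the Schreier graphs already fails at the first expansion.

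A secondary point: to conclude that the history graph \emph{equals} the self-similarity graph $\Sigma(G,S)$ (with multiplicities), and not merely that the two adjacency relations agree, one needs that VERS expansions never produce two parallel edges of the same color, because $\Sigma(G,S)$ has exactly one $g$-edge out of each vertex for each $g \in S$; the paper isolates this as a separate lemma proved by induction via spanning lifts, whereas you only flag it as bookkeeping. Once the replacement rule is corrected to the ``$h$ with $h_x=g$'' form, the remainder of your plan — induction on the level to match horizontal edges, then quoting \cite{LimitSets} for hyperbolicity and for identifying the Gromov boundary with the limit space of $G$ — coincides with the paper's argument.
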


In particular, these VERSs provide a unified recursive procedure to build Schreier graphs, inspired by already existing constructions for tile graphs in the bounded automata case \cite{BDN}.

\subsection{Attractors of iterated function systems (\cref{sec:IFS})}

Attractors of iterated function systems provide a classic source of fractals \cite[Chapter 9]{Falcon}.
An iterated function system is a finite collection of contracting maps and its attractor is the invariant space.
In the last two decades, the literature witnessed multiple hyperbolic graphs linked to iterated function systems, usually with the aim of identifying their attractor with the Gromov boundaries of the graphs \cite{Kaimanovich,KLLW}.
This identification is often of a metric nature metric, as opposed to purely topological.

We build a VERS $\R_\Phi$ related to any injective post-critically finite iterated function system $\Phi$.

\begin{thms}{\ref{thm:IFS:history:graph} and \ref{thm:IFS:hyperbolic}}
Let $\Phi$ be an injective post-critically finite IFS.
Then the history graph of $\R_\Phi$ is hyperbolic and its horizontal edges correspond to intersections of same-length iterations of maps on the attractor.
Moreover, the limit space of $\R_\Phi$ is homeomorphic to the attractor of $\Phi$.
\end{thms}

The limit spaces of these VERSs are homeomorphic to the attractors of the iterated function systems.
Similar constructions (such as those in \cite{KLLW,,SSIFS}) produce history graphs whose levels are arranged according to the contracting ratios, so that the Gromov boundary is in some way ``metrically equivalent'' to the attractor.
VERSs provide a more general construction (for instance, we do not require the maps to be similarities, as opposed to \cite{SSIFS}), at the cost of only focusing on the topology of the attractor.
The hyperbolicity of both history graphs ultimately relies on similar arguments.

\subsection{Expansions and limit spaces of edge replacement systems (\cref{sec:ERSs})}

Edge replacement systems were developed in \cite{BF19} in order to codify a plethora of well-known fractals and to introduce their \textit{rearrangement groups}, which are countable groups of ``piecewise-rigid'' homeomorphisms of the fractal in the same vein as Thompson's groups $F$, $T$ and $V$.
Apart from an interest in their algebraic, geometric and algorithmic properties, rearrangement groups found applications in the study of larger groups acting on fractals \cite{NeretinBasilica,,BasilicaQuasiSymmetries,,QuasiSymmetries}, and edge replacement systems alone were the subject of \cite{RatGlue}.

Differently from VERSs, in edge replacement systems new vertices are added when expanding edges.
To address this difference, we build a VERS $\R_\ERS$ whose expansions are the barycentric subdivisions of the expansions of a given edge replacement system $\ERS$ (\cref{cor:ERS:to:VERS:full:expansion}).

\begin{thms}{\ref{thm:expanding:ERS:is:expanding:VERS} and \ref{thm:ERS:limit:space:is:VERS:limit:space}}
Given an edge replacement system $\ERS$ that is expanding (in the sense of \cite{BF19}), the VERS $\R_\Phi$ is expanding (in the sense of \cref{def:expanding:VERS}), so its history graph is hyperbolic.
Moreover, the limit space of $\R_\ERS$ is homeomorphic to the limit space of $\ERS$.
\end{thms}

\subsection{Final remarks}

Apart from the three examples above, we believe that VERSs are a suitable formalism for describing other recursive graph constructions.
For example, it is likely that the recursive structure hinted at in \cite[Example 3.2]{Perego} regarding atoms of hyperbolic groups (see \cite{BBM}) forms a VERS.

Observe also that limit spaces of VERSs need not be finitely ramified (in the sense of \cite{T08}).
There are examples coming from \cref{sec:self:similar:groups}:
the limit space of a finitely generated contracting not bounded self-similar group $G$ is not finitely ramified (\cite[Corollary IV.23]{Bondarenko}).
Other examples arise from \cref{sec:ERSs}:
the limit spaces of certain edge replacement systems are not finitely ramified.
On the other hand, all of the limit spaces that arise from \cref{sec:IFS} are finitely ramified.


\section{Vertex and edge replacement systems}
\label{sec:VERS}

In this section we introduce the titular technology, right after revising some basic notions and setting some useful notation.

\subsection{Background and notation}

Let us first fix some standard notions and the related notation.

\subsubsection{Graphs, colors and types}
\label{ssub:graphs}

A \textbf{graph} $\Gamma$ consists of a set $V(\Gamma)$ of \textit{vertices}, a set $E(\Gamma)$ of \textit{edges} and two maps $\iota, \tau \colon E(\Sigma) \to V(\Sigma)$ sending each edge to its \textit{initial} and \textit{terminal vertex}.
Note that parallel edges (i.e., $e_1$ and $e_2$ such that $\iota(e_1)=\iota(e_2)$ and $\tau(e_1)=\tau(e_2)$) and loops (i.e., $e$ such that $\iota(e)=\tau(e)$) are allowed.

We will say that two edges $e_1$ and $e_2$ are \textbf{adjacent} if they share a vertex (i.e., $\{\iota(e_1),\tau(e_1)\}\cap\{\iota(e_2),\tau(e_2)\}\neq\emptyset$) and that an edge $e$ is incident on a vertex $v$ if $v \in \{\iota(e),\tau(e)\}$.
For the sake of brevity, we will write that $e$ is $u \rightarrow v$ when $u=\iota(e)$ and $v=\tau(e)$.
Note that there may be multiple edges $u \rightarrow v$, since parallel edges are allowed.
We will also write that $e$ is $u \leftrightarrow v$ if $\{\iota(e),\tau(e)\} = \{u,v\}$.

A \textbf{walk} is a sequence of edges $e_1, e_2, \dots, e_k$ (possibly $k=\infty$) such that $e_i$ and $e_{i+1}$ are adjacent for all $i=1,2,\dots,k-1$.
A \textbf{path} is a walk without backtracking, i.e., such that $e_i \neq e_{i+1}$ for all $i=1,2,\dots,k-1$.
A \textbf{cycle} is a closed path, i.e., such that the first and last vertices are the same.
Note that walks and paths do not take edge orientation into consideration.
We regard $V(\Gamma)$ as a metric space with the distance between two vertices given by the shortest path joining them.
Such distance is infinite when the two vertices lie in distinct connected components.

A graph $\Gamma$ has \textbf{colors} $C$ and/or \textbf{types} $T$ if it comes equipped with maps $\col \colon E(\Gamma) \to C$ and/or $\type \colon V(\Gamma) \to T$.
Throughout the paper, $C$ will be a fixed set of colors and $T$ will be the set of vertices of a graph $\Sigma$.
The following notion allows us to have consistency between colors and types.

\begin{definition}
\label{def:kappa:compatible:colors:types}
Let $\Gamma$ be a graph that has colors $C$ and types $T$ and let $\kappa$ be a map $C \to T \times T,\ c \mapsto (\kappa(c)_1, \kappa(c)_2)$.
We say that $\Gamma$ is \textbf{$\kappa$-compatible} if, for all $e \in E(\Gamma)$, one has that $\type(\iota(e))=\kappa(c)_1$ and $\type(\tau(e))=\kappa(c)_2$.
\end{definition}

\subsubsection{Gromov boundaries of hyperbolic graphs}

Our notion of hyperbolic graphs will be that of Gromov, see for example \cite[Chapter III.H]{BridsonHaefliger}.
The hyperbolic graphs of this paper will all be augmented trees, which we will define in \cref{sec:limit:spaces}.
Such graphs have a handy characterization of hyperbolicity given by Kaimanovich in \cite{Kaimanovich}, which we will recall in \cref{thm:Kaimano}.
For this reason, we will never truly need to use the direct definition of a hyperbolic graph and we will fully rely on the ``no big squares'' condition of \cref{thm:Kaimano} by Kaimanovich.\

Let us recall the basics about Gromov boundaries, see for example \cite[Chapter III.H, \S3]{BridsonHaefliger}.
Fix a hyperbolic graph $\Gamma$ and an arbitrary vertex $v_0 \in V(\Gamma)$.

\begin{definition}
\label{def:asymptotic:relation}
Let $a_0, a_1, \dots$ and $b_0, b_1, \dots$ be the vertices of two infinite geodesic paths starting from $v_0$ ($a_0=b_0=v_0$).
We say that they are \textbf{asymptotically equivalent} if there exists some $D \geq 0$ such that $\forall i \in \mathbb{N},\ d_\Gamma(a_i,b_i) \leq D$.
\end{definition}

This defines an equivalence relation, which we will refer to as the \textbf{asymptotic relation} of $\Gamma$.
It is known that it does not depend on the choice of base vertex $v_0$, so the following definition makes sense and only depends on $\Gamma$.

\begin{definition}
\label{def:Gromov:boundary}
The \textbf{Gromov boundary} $\partial\Gamma$ of a hyperbolic graph $\Gamma$ is the quotient of the set of paths starting from $v_0$ under the asymptotic relation.
\end{definition}

It is standard to endow the Gromov boundary with the so-called visual topology, which makes it a compact metrizable topological space.

\subsubsection{Edge shifts}

A \textbf{directed walk} is a walk such that $\tau(e_i)=\iota(e_{i+1})$ (i.e., following the edge orientation).
A \textbf{directed path} is a directed walk that is a path.

If $\Sigma$ is a finite graph and $s \in V(\Sigma)$, we denote by $\Shift(\Sigma,s)$ the \textbf{initial edge shift} on $\Sigma$ starting at $s$, which is the set of all infinite directed paths on $\Sigma$ that start from $s$.
The \textbf{alphabet} of an edge shift $\Shift(\Sigma,s)$ is the set of edges of $\Sigma$ and the \textbf{language} is the set $\Lang\Shift(\Sigma,s)$ of all finite directed paths on $\Sigma$ that start from $s$.

The elements of $\Shift(\Sigma,s)$ (respectively, $\Lang\Shift(\Sigma,s)$) correspond to left-infinite words $\dots e_3 e_2 e_1$ (respectively, finite words $e_k \dots e_2 e_1$) in the alphabet $E(\Gamma)$, where $\iota(e_1)=s$ and $\tau(e_i)=\iota(e_{i+1})$ for all $i \geq 1$.
Note the unusual left direction of words, which will be useful in \cref{sec:self:similar:groups,,sec:IFS}.
It is useful to have the language $\Lang\Shift$ of an edge shift $\Shift$ include the empty path, corresponding to an empty word denoted by $\varepsilon$.

The \textbf{type} of a path $w \in \Lang\Shift(\Sigma,s)$ is the terminal vertex of the path $w$ in $\Sigma$.
The type of $\varepsilon$ is set as the initial vertex $s$ of the edge shift $\Shift(\Sigma,s)$.
Note that the type determines which digits are allowed next, i.e., for all $w_1, w_2 \in \Lang\Shift(\Sigma,s)$, if $x w_1 \in \Lang\Shift(\Sigma,s)$ then $x w_2 \in \Lang\Shift(\Sigma,s)$ if and only if $\type(w_1) = \type(w_2)$.

In the context of this paper, a fixed graph $\Sigma$ will determine an edge shift $\V=\Shift(\Sigma,s)$ and other graphs $\Gamma_n$ will have elements of $\Lang\V$ as vertices.
Thus, the type of a path $w \in \Lang\V$ on $\Sigma$ will actually determine a typing of the vertices of graphs $\Gamma_n$, in accordance with our previous description of the term \textit{type} in \cref{ssub:graphs}.

If $\Alphabet$ is a finite set, we will denote by $\Alphabet^n$ the set of words of a fixed length $n$, by $\Alphabet^*$ the set of all finite words and by $\Alphabet^{-\omega}$ the set of all left-infinite words.
If $w \in \Alphabet^*$, we will denote by $w^{-\omega}$ the periodic word $\dots w w$.

\subsection{Definition of VERSs and their limit spaces}

Let us now introduce the main technology of the paper. 

\begin{definition}
\label{def:VERS}
A \textbf{vertex and edge replacement system} (\textbf{VERS} for short) is the collection of the following data.
\begin{itemize}
    \item An initial edge shift $\V = \Shift(\Sigma,s)$.
    \item A finite set $C$ of colors.
    \item A map $\kappa \colon C \to V(\Sigma) \times V(\Sigma)$.
    \item For each color $c \in C$, a graph $R_c$ colored by $C$, typed by $\Lang\V$ and $\kappa$-compatible, whose set of vertices is
    $$\{ x\mathrm{i}, y\mathrm{t} \mid x,y \in E(\Sigma) \text{ such that } \iota(x)=\kappa(c)_1 \text{ and } \iota(y)=\kappa(c)_2 \}$$
    with $\type(x\mathrm{i})=\type(x\mathrm{t})=\tau(x)$ (here $\mathrm{i}$ and $\mathrm{t}$ are just symbols).
    \item A graph $\Gamma_0$ colored by $C$, typed by $\Lang\V$ and $\kappa$-compatible, consisting just of a bouquet of loops at the vertex $\varepsilon \in \Lang\V$.
\end{itemize}
\end{definition}

Many examples are given later on, see \cref{sub:self:similar:examples,,ex:IFS:Sierpiński:triangle,,ex:ERS:basilica}.

Note that, when the edge shift $\V$ is a full shift (i.e., when $V(\Sigma) = \{s\}$ and so $\V = E(\Sigma)^{-\omega}$), the map $\kappa$ is necessarily $c \mapsto (s,s)$ and thus can safely be omitted.

A VERS defines a sequence of graphs $\Gamma_n$ as follows.
Each graph $\Gamma_n$ has vertices $\Lang_n\V$, the set of finite words of length $n$ in the language of the initial edge shift $\V$.
From the $n$-th graph $\Gamma_n$, define $\Gamma_{n+1}$ as the graph whose vertices are $\Lang_{n+1}\V$ and whose edges are those obtained by expanding all of the edges of $\Gamma_n$.
By \textit{expanding} here we mean the following operation:
if $u \rightarrow v$ is an edge of $\Gamma_n$, then $\Gamma_{n+1}$ has edges obtained from those of $R_c$ by replacing $\mathrm{i}$ with $u$ and $\mathrm{t}$ with $v$ (so the vertices involved all have the form $xu$ and $xv$).

More generally, any $\kappa$-compatible graph $\Gamma$ can be expanded:
\begin{itemize}
    \item the vertices are $xu$, spanning all vertices $u$ of $\Gamma$ and all $x \in E(\Sigma)$ such that $\iota(x)=\type(u)$, with type $\type(ux) = \tau(x)$;
    \item the edges are obtained by replacing each edge $e$ of $\Gamma$ with a copy of $R_{\col(e)}$;
    if $e$ is $u \rightarrow v$, then the edge $x \mathrm{a} \to y \mathrm{a}$ of $R_{\col(e)}$ ($\mathrm{a} \in \{\mathrm{i},\mathrm{t}\}$) becomes $xu \rightarrow yv$.
\end{itemize}

If $\Lambda$ is a subgraph of a $\kappa$-compatible graph $\Gamma$, then clearly $\Lambda$ is $\kappa$-compatible and its expansion is a subgraph of the expansion of $\Gamma$.
Because of this, even if the VERS expansions are the graphs $\Gamma_n$, it will often make sense to consider the expansion of arbitrary graphs $\Gamma$, generally with the idea that they may appear as subgraphs of some $\Gamma_n$.

\begin{proposition}
\label{prop:expansion:kappa:compatible}
If $\Gamma$ is a $\kappa$-compatible graph, then its expansion is $\kappa$-compatible.
In particular, every $\Gamma_n$ is $\kappa$-compatible.
\end{proposition}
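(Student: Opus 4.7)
The proof is essentially a bookkeeping exercise that unpacks the definitions of expansion and of $\kappa$-compatibility, so I would handle it directly without any clever machinery. The only real content is the coherence between the typing rule on the new vertices ($\type(xu) = \tau(x)$) and the typing rule for the template graphs $R_c$ ($\type(x\mathrm{i}) = \type(x\mathrm{t}) = \tau(x)$), so the key is just to observe that these two rules match on the nose.

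First I would fix an arbitrary edge $\tilde{e}$ in the expansion of $\Gamma$ and, by the definition of expansion, trace it back to the edge $e$ of $\Gamma$ from which it arose, say $e \colon u \to v$ with color $c = \col(e)$; then $\tilde{e}$ is the image of some edge $f$ of $R_c$, with the same color $c' := \col(f)$ assigned in the expansion. By $\kappa$-compatibility of $\Gamma$, we have $\type(u) = \kappa(c)_1$ and $\type(v) = \kappa(c)_2$, which is exactly what is needed for the substitutions $\mathrm{i} \mapsto u$, $\mathrm{t} \mapsto v$ to land among valid vertices of the expansion (since the vertex $xu$ of the expansion requires $\iota(x) = \type(u)$, matching the requirement $\iota(x) = \kappa(c)_1$ from the vertex set of $R_c$).

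Now suppose $f$ in $R_c$ has endpoints of the form $x\mathrm{a}$ and $y\mathrm{b}$ with $\mathrm{a}, \mathrm{b} \in \{\mathrm{i}, \mathrm{t}\}$, which in the expansion become $xu'$ and $yv'$ where $u', v' \in \{u,v\}$ are determined by $\mathrm{a}, \mathrm{b}$. By $\kappa$-compatibility of $R_c$, one has $\type(x\mathrm{a}) = \kappa(c')_1$ and $\type(y\mathrm{b}) = \kappa(c')_2$. But $\type(x\mathrm{a}) = \tau(x)$ by the typing rule on the vertices of $R_c$, and in the expansion the type of $xu'$ is defined to be exactly $\tau(x)$; the same holds for $yv'$. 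Therefore the endpoint types of $\tilde{e}$ in the expansion are $\kappa(c')_1$ and $\kappa(c')_2$ respectively, proving $\kappa$-compatibility of the expansion.

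For the second assertion, I would proceed by induction on $n$: the base case is immediate because $\Gamma_0$ is required to be $\kappa$-compatible by \cref{def:VERS}, and the inductive step is precisely the first part of the proposition applied to $\Gamma_n$. I do not expect any obstacle here; the only point that warrants a moment of care is to make sure that the symbols $\mathrm{i}, \mathrm{t}$ in $R_c$ are consistently replaced according to whether the corresponding endpoint of $f$ sits over $u$ or over $v$, and that the color of $f$ is carried to the expansion unchanged so that the $\kappa$-condition to be verified refers to the same $c'$ on both sides.
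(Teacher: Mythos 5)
Your proof is correct and follows the same route as the paper: the paper's proof is just a one-line observation that the expanded edges inherit their colors and endpoint types from the $\kappa$-compatible graphs $R_c$, which is exactly what you verify, only spelled out in full detail (including the matching of $\type(x\mathrm{a})=\tau(x)$ with $\type(xu)=\tau(x)$ and the induction for the $\Gamma_n$).
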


\begin{proof}
Assume that $\Gamma$ is $\kappa$-compatible and let $\Gamma'$ be its expansion.
Then $\Gamma'$ has edges that are colored according to the $\kappa$-compatible graphs $R_c$, so $\Gamma'$ is $\kappa$-compatible.
\end{proof}


\section{Limit spaces of expanding VERSs}
\label{sec:limit:spaces}

In this section we define and discuss the history graph of VERSs, their limit spaces (when the history graph is hyperbolic) and a condition for hyperbolicity.

\begin{definition}
\label{def:history:graph}
Given a VERS $\R = \left( \V=\Shift(\Sigma,s), C, \kappa, (R_c)_{c \in C}, \Gamma_0 \right)$, its \textbf{history graph} is the graph $\AT_\R$ (or simply $\AT$ if $\R$ is understood) defined as follows.
\begin{itemize}
    \item The set of vertices of $\AT$ is $\Lang\V$.
    \item If $v, xv \in \Lang\V$ for some $x \in E(\Sigma)$, then $\AT$ has a \textbf{vertical edge} $v \rightarrow xv$.
    \item If $u \rightarrow v$ is an edge of some $\Gamma_n$, then it is a \textbf{horizontal edge} of $\AT$.
\end{itemize}
\end{definition}

\begin{definition}
\label{def:VERS:limit:space}
The \textbf{limit space} of a VERS $\R$ is the Gromov boundary of the history graph $\AT_\R$, provided $\AT_\R$ is hyperbolic.
\end{definition}

As we discuss in the remainder of this section, history graphs are hyperbolic for an interesting class of VERSs (\cref{thm:expanding:VERS:hyperbolic}).

\subsection{The history graph of a VERS is an augmented tree}

Given a tree $T$ with root $r$, let us denote by $\Pred \colon V(T) \setminus \{r\} \to V(T)$ the map that sends any vertex $u$ to its immediate predecessor.

\begin{definition}
\label{def:augmented:tree}
An \textbf{augmented tree} is a graph $T$ equipped with a bipartition of the set of edges into a set of \textit{vertical edges} and one of \textit{horizontal edges} satisfying the following properties:
\begin{itemize}
    \item the subgraph of $T$ consisting of the vertical edges is a rooted tree;
    \item if $u \rightarrow v$ is a horizontal edge, then there is a horizontal edge between the immediate predecessors $\Pred(u)$ and $\Pred(v)$.
\end{itemize}
\end{definition}

Note that this is not precisely the standard definition (e.g., Definition 3.2 of \cite{Kaimanovich}).
The sole difference is that we allow for loops and parallel edges, which does not alter the metric structure of the graph.

\begin{definition}
\label{def:lift}
In an augmented tree, the \textbf{lift} of a horizontal edge $u \leftrightarrow v$ is any edge $\Pred(u) \leftrightarrow \Pred(v)$ (which is necessarily horizontal).
\end{definition}

\begin{remark}
\label{rmk:lift:distance}
In any augmented tree, if $P = e_1, \dots, e_k$ is a horizontal path between vertices $u$ and $v$, the lifts of $e_1, \dots, e_k$ form a walk between $\Pred(u)$ and $\Pred(v)$.
In particular, there is a path of length at most $k$ between $\Pred(u)$ and $\Pred(v)$, so their horizontal distance is at most $k$.
In simple terms, this means that lifting cannot increase distances, in an augmented tree.
\end{remark}

\begin{proposition}
The history graph $\AT$ of any VERS is an augmented tree.
\end{proposition}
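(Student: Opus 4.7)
The plan is to verify the three requirements of \cref{def:augmented:tree} directly from \cref{def:history:graph} and the expansion procedure specified in \cref{def:VERS}. Namely, I need to check (i) that the sets of vertical and horizontal edges partition $E(\AT)$, (ii) that the subgraph of vertical edges is a rooted tree, and (iii) that every horizontal edge lifts to a horizontal edge between predecessors.

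For (i), the bipartition is built into \cref{def:history:graph}: vertical edges come from the relation $v \mapsto xv$ on $\Lang\V$ while horizontal edges come from the graphs $\Gamma_n$, and nothing in the construction places an edge into both classes. For (ii), I would identify the root as the empty word $\varepsilon$ and argue that each nonempty word $w=xu \in \Lang\V$ has exactly one incoming vertical edge, namely $u \to xu$; iterating this \emph{peel-off-the-first-letter} map gives a unique vertical path from $w$ back to $\varepsilon$, so the vertical subgraph is a rooted tree (with $\Pred(xu)=u$).

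The only point that requires any content is (iii), and it is essentially a bookkeeping check against the expansion rule. Pick a horizontal edge $e$ of $\AT$ and let $n$ be the level at which it lives, i.e., the common length of its endpoints. If $n=0$ then $e$ is a loop at $\varepsilon$ (since $\Gamma_0$ is a bouquet at $\varepsilon$) and the condition is vacuous, as no predecessor exists. If $n\geq 1$ then $e$ is an edge of $\Gamma_n$, which is by construction the expansion of $\Gamma_{n-1}$; by the expansion rule, each edge of $\Gamma_n$ has the form $xu \to yv$ for some $x,y \in E(\Sigma)$ and some edge $u \to v$ of $\Gamma_{n-1}$. Since $\Pred(xu)=u$ and $\Pred(yv)=v$, the required horizontal edge between predecessors is precisely the edge of $\Gamma_{n-1}$ that $e$ was lifted from.

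There is no real obstacle here: the statement is a sanity check that \cref{def:VERS} and \cref{def:history:graph} are compatible, and the potentially tricky corner case (edges of $\Gamma_0$, where $\Pred$ is undefined) is handled by the fact that $\Gamma_0$ consists solely of loops at the root, making the lifting condition vacuous at level $0$. I would write the proof in a handful of lines, with the main task being to match the notation of the expansion rule to the predecessor map on $\Lang\V$.
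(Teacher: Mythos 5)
Your proof is correct and follows essentially the same route as the paper: identify the vertical subgraph with the rooted tree of $\Lang\V$, and observe that every horizontal edge of $\Gamma_n$ arises by expanding an edge of $\Gamma_{n-1}$, which is exactly the required horizontal edge between predecessors. Your explicit treatment of the level-$0$ case (loops at $\varepsilon$, where $\Pred$ is undefined) is a small extra care the paper leaves implicit, but the argument is the same.
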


\begin{proof} 
The graph $\AT$ comes equipped with a bipartition into horizontal and vertical edges.
Let us see that it satisfies the two conditions of \cref{def:augmented:tree}.

The subgraph of $\AT$ that only consists of the vertical edges is the graph of the language of the initial edge shift $\V$, which is a rooted tree.

Assume that $u \rightarrow v$ is a horizontal edge of $\AT$ ($u$ and $v$ may be the same), say that it is an edge of $\Gamma_n$.
Let us consider the predecessors $\Pred(u)$ and $\Pred(v)$, which are vertices of $\Gamma_{n-1}$ (possibly the same).
By the very definition of the graphs $\Gamma_m$, we know that $e' = \Pred(u) \leftrightarrow \Pred(v)$ is an edge of $\Gamma_{n-1}$.
This means that there is a horizontal edge $e'$ in $\AT$ between the predecessors of $u$ and $v$, as needed.
\end{proof}

Every edge of an augmented tree has a lift (by the very definition of augmented tree, \cref{def:augmented:tree}), but in general there may be multiple lifts of the same edge.
The augmented tree $\AT_\R$ of a VERS $\R$ has additional structure:
for each edge, among its lifts there is a ``canonical'' one:

\begin{definition}
\label{def:spanning:lift}
Given a horizontal edge $e = xv \rightarrow yw$ ($v,w,xv,yx \in \Lang\V$ and $x,y \in \Alphabet\V$) in the augmented tree of a VERS, say that it belongs to $\Gamma_n$, its \textbf{spanning lift} is the unique edge $v \leftrightarrow w$ of $\Gamma_{n-1}$ (and so a horizontal edge in the augmented tree) whose expansion produces $e$.
\end{definition}

\subsection{Expanding VERS have hyperbolic history graphs}

Various notions of expansivity have been developed in multiple areas such as group theory, automata, graphs and dynamical systems (e.g. \cite{BF19,KLLW,NekSSG,HP}).
Here we provide one for our replacement systems.

\begin{definition}
\label{def:expanding:VERS}
A VERS is $n$-\textbf{expanding} if, given any path $\Gamma$ of length $n$, its $n$-th expansion does not feature any geodesic of length $n$ joining vertices successors of the endpoints of $\Gamma$.
The VERS is \textbf{expanding} if there exists an $n \in \mathbb{N}$ such that it is $n$-expanding.
\end{definition}

The idea behind such notion is to make sure that any two vertices of a graph see the distances of their successors grow larger, eventually (after some finite amount of expansions), whatever the shape of the geodesics between them.

\begin{remark}
Whether a VERS is $n$-expanding or not for a fixed $n \in \mathbb{N}_{>0}$ is decidable:
one simply needs to consider the $n$-th expansions of all possible paths of length $n$ (i.e., for all possible choices of colors and orientations of the edges of such path) and check whether they feature geodesics of length $n$ joining the successors of the endpoints of the original path.
This provides a naive algorithm which shows that the problem of determining whether a VERS is expanding is semi-decidable.
\end{remark}

Let us recall the main tool (from \cite{Kaimanovich}) that we will use to show that every expanding VERS has a hyperbolic history graph.
A \textbf{geodesic $n$-square} is a cycle of an augmented tree $\AT$ that decomposes as four consecutive geodesics $H_1$, $V_1$, $H_2$, $V_2$ of $\AT$, each of length $n$, of which $H_1$ and $H_2$ are fully comprised of horizontal edges and $V_1$ and $V_2$ of vertical edges.

\begin{theorem}[Definition 3.12 and Theorem 3.13 of \cite{Kaimanovich}]
\label{thm:Kaimano}
An augmented tree is hyperbolic if and only if it satisfies the \textbf{``no big squares'' condition}, i.e., the size of geodesic squares is bounded.
\end{theorem}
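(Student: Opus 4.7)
The theorem is a biconditional, so I address both implications.

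\emph{Forward direction} ($\Rightarrow$). Suppose $\AT$ is $\delta$-hyperbolic. Since geodesic quadrilaterals in a $\delta$-hyperbolic space are $2\delta$-slim (split along a diagonal into two $\delta$-slim triangles), consider a geodesic $n$-square with corners $a,b,c,d$, horizontal sides $H_1=ab$ at depth $k$ and $H_2=cd$ at depth $k+n$, and vertical sides $V_1=bc$, $V_2=da$ of length $n$. Let $m$ be the midpoint of $H_1$. By $2\delta$-slimness, $m$ lies within $2\delta$ of $H_2\cup V_1\cup V_2$. Each vertical edge changes depth by exactly one, so any vertex of $H_2$ is at distance at least $n$ from $m$; and any vertex of $V_i$ at depth $k+j$ is at distance at least $\max(j,n/2-j)\geq n/4$ from $m$ (using the vertical-change lower bound together with $d(m,a)=d(m,b)=n/2$ and the triangle inequality). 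Hence $n/4\leq 2\delta$, giving $n\leq 8\delta$.

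\emph{Backward direction} ($\Leftarrow$). Suppose every geodesic square has size at most $N$. The plan is to verify Gromov's four-point condition with constant depending only on $N$, in three steps. First, using the lift property and \cref{rmk:lift:distance}, I would establish a canonical ``V-shape'' for geodesics: between any two vertices $u,v$ there is a geodesic that descends vertically from $u$ to some $u'$, moves horizontally from $u'$ to some $v'$ at that common depth, and ascends vertically to $v$. This is produced by pushing horizontal excursions of a given geodesic as close to the root as the lift condition allows; by \cref{rmk:lift:distance} this operation does not increase length, and iterated pushing leads to a V-shape. Second, the ``no big squares'' hypothesis would translate into a uniform bound $N$ on the horizontal portion of such V-shapes: if the horizontal piece at depth $d$ had length exceeding $N$, then iteratively lifting it together with the two flanking vertical descents would exhibit a geodesic square of size exceeding $N$, contradicting the hypothesis. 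Third, for any four vertices, compare the three relevant pairwise V-shaped geodesics: the uniform bound on horizontal portions reduces the Gromov-product comparison to the underlying vertical subtree (a genuine tree, which trivially satisfies the four-point condition), with an additive error $O(N)$ coming from the horizontal portions.

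The main obstacle is clearly the backward direction, and specifically making the V-shape structural claim fully rigorous. While \cref{rmk:lift:distance} shows lifting cannot increase horizontal distance, rebuilding a whole geodesic in V-shape without loss of length requires a careful induction on depth that handles interleaved horizontal and vertical segments and the possible local maxima in depth along the original geodesic. Once the V-shape and the horizontal bound are established, the verification of the four-point condition is essentially bookkeeping, and this is the technical content of Kaimanovich's proof of \cref{thm:Kaimano} in \cite{Kaimanovich}.
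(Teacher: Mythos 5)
First, a point of comparison: the paper does not prove \cref{thm:Kaimano} at all. It is quoted as Definition 3.12 and Theorem 3.13 of \cite{Kaimanovich} and used as a black box, so there is no in-paper argument to measure your proposal against; the question is only whether your proposal stands on its own.

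Your forward direction is essentially complete and correct: quadrilaterals in a $\delta$-hyperbolic space are $2\delta$-slim, the level function changes by exactly one along vertical edges and not at all along horizontal ones, and the midpoint estimate $\max(j,\,n/2-j)\geq n/4$ yields $n\leq 8\delta$. (You implicitly assume the vertical sides are monotone in the level, i.e., $V_1$ joins a vertex to its $n$-th predecessor; that is the intended reading, as the paper's own use of squares in \cref{lem:squares:lift} confirms.)

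The backward direction, however, is a plan rather than a proof, and you say so yourself, so there is a genuine gap. Two remarks. The V-shape lemma you flag as the main obstacle is in fact the more tractable of your missing steps: given a geodesic $\gamma$ from $u$ to $v$, let $\ell$ be the minimal level it visits; the level function forces $\gamma$ to contain at least $(|u|-\ell)+(|v|-\ell)$ vertical edges, and projecting each vertex of $\gamma$ to its ancestor at level $\ell$ (horizontal edges lift to horizontal edges or degenerate steps by \cref{rmk:lift:distance}, vertical edges project to trivial steps) shows that $\gamma$ contains at least $h(\ell)$ horizontal edges, where $h(\ell)$ is the horizontal distance between the level-$\ell$ ancestors of $u$ and $v$. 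Hence the V-shaped path through level $\ell$ is no longer than $\gamma$ and is itself a geodesic. The genuinely delicate point is your second step: to extract a geodesic $n$-square with $n>N$ from a canonical geodesic with a long horizontal part, you must produce a \emph{top} horizontal side that is a geodesic of the augmented tree of the same length $n$ as the other three sides, not merely a horizontal walk of length at most $n$; ruling out shortcuts over the top requires exploiting the minimality of the canonical level (which gives $h(\ell-j)\geq L-2j$), and your proposal does not address this. The third step, reducing the four-point condition to the underlying tree with additive error $O(N)$, is likewise only asserted. So the proposal correctly identifies the architecture of Kaimanovich's argument but does not supply the backward implication.
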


Note that the following lemma is about any augmented tree.

\begin{lemma}
\label{lem:squares:lift}
If an augmented tree has geodesic $(n+1)$-squares, then it has geodesic $n$-squares.
\end{lemma}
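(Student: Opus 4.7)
The plan is to combine sub-geodesics of the sides of the given $(n+1)$-square with the lift operation on horizontal edges (guaranteed by the augmented tree axiom and \cref{rmk:lift:distance}) in order to produce a geodesic $n$-square. Write the $(n+1)$-square as $[a,b,c,d]$, with horizontal sides $H_1$ from $a$ to $b$, $H_2$ from $c$ to $d$ (each of length $n+1$) and vertical sides $V_1$ from $b$ to $c$, $V_2$ from $d$ to $a$.

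First I would record two elementary observations. Any sub-path of a side is itself a geodesic, so length-$n$ vertical sub-paths of $V_1, V_2$ are immediately at hand. And by \cref{rmk:lift:distance} applied to $H_1$, we have $d(\Pred(a),\Pred(b)) \leq n+1$; combined with the triangle inequality
\[ n+1 = d(a,b) \leq d(a,\Pred(a)) + d(\Pred(a),\Pred(b)) + d(\Pred(b),b) = d(\Pred(a),\Pred(b))+2, \]
this forces $d(\Pred(a),\Pred(b)) \in \{n-1,n,n+1\}$, and an identical bound holds for $d(\Pred(c),\Pred(d))$.

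Next I would case-split based on the direction (toward or away from the root) of each vertical edge incident to $a, b, c, d$. Since $V_1$ and $V_2$ are unique paths in the tree, this is finite combinatorial data that reduces by symmetry to a handful of essential cases. In each case I would select new corners among $\{a, b, c, d, \Pred(a), \Pred(b), \Pred(c), \Pred(d)\}$, typically shifting upward at those corners where the adjacent vertical edge ascends, so that the new vertical sides become length-$n$ sub-paths of $V_1, V_2$ (automatically geodesics) and the new horizontal sides are lifts of the original ones, with length exactly $n$ by the estimate above.

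The main obstacle is pinning down the length of the new horizontal sides to exactly $n$, rather than $n-1$ or $n+1$. In the sub-cases where a lifted horizontal path has the wrong length, I would either iterate the lift on that side one more level, or combine the lift on one horizontal side with a plain sub-path of the opposing vertical side, checking in either case that a length-$n$ geodesic cycle closes up. Finally, I would rule out degenerate configurations in which the four chosen corners collapse in pairs; such collapses would shortcut the original $(n+1)$-geodesics and contradict the hypothesis that $[a,b,c,d]$ is a geodesic square.
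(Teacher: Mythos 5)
Your plan has a genuine gap, and in fact its central hope is false. Say $H_1$ (with endpoints $a,b$) is the horizontal side farther from the root. The paper's own argument shows that \emph{every} iterated lift of $H_1$, all the way up to the level of the opposite side $H_2$, is a geodesic of length exactly $n+1$: lifting never increases length (\cref{rmk:lift:distance}), so if a single lift dropped to length $\leq n$, or failed to be a geodesic, this defect would propagate upward and the endpoints of $H_2$ would be at distance $<n+1$, contradicting that $H_2$ is a geodesic of length $n+1$. In particular $\dist(\Pred(a),\Pred(b))=n+1$, so the case your construction relies on --- a lifted horizontal side of length exactly $n$ --- never occurs; your triangle-inequality estimate only gives membership in $\{n-1,n,n+1\}$ and cannot pin the length down, and even in a lucky case a distance of $n$ would not by itself provide a \emph{horizontal geodesic} of length $n$, which is what the square needs. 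Note also that your corner-shifting leaves the far side $H_2$ at length $n+1$ untouched, and your fallback options (``iterate the lift one more level'', ``combine with a sub-path of the opposing vertical side'') are not carried out and do not obviously produce four geodesic sides of length $n$ closing into a cycle. Since all lifts of $H_1$ retain length $n+1$, no square built solely from lifts of the horizontal sides and sub-paths of $V_1,V_2$ can work.

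The missing idea is to shrink the square \emph{horizontally} rather than vertically. Lift $H_1$ repeatedly, obtaining a grid of horizontal geodesics, each of length exactly $n+1$ by the argument above; delete from each the last edge (the one adjacent to, say, $V_1$); and erect a \emph{new} vertical side, interior to the original square, namely the vertical path over the penultimate vertex $w$ of $H_1$, formed by the edges $\Pred^{i+1}(w)\rightarrow\Pred^{i}(w)$ (these exist because consecutive lifts join predecessors). The trimmed bottom, a trimmed intermediate lift as the new top, a length-$n$ sub-path of $V_2$, and this new interior vertical side then form a geodesic $n$-square. This global ``lift to the top to force exact lengths, then trim one edge and insert an interior vertical side'' step is precisely what your local case analysis at the corners does not supply.
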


\begin{proof}
Consider a square $(T,L,B,R)$ of size $n+1$ where $T$ and $B$ are the horizontal geodesics, $T$ the nearest to the root.
Let $u$ and $v$ be the endpoints of $B$ and let $u'=\Pred^n(u)$ and $v'=\Pred^n(v)$ be the endpoints of $T$.
We will essentially prove that the square $(T,L,B,R)$ has an underlying ``grid'' of subsquares.

By \cref{rmk:lift:distance}, lifting the bottom geodesic $B_0 \coloneq B$ produces a walk $B_1$ of length at most $n+1$.
Iteratively, we can lift $B_i$ to $B_{i+1}$, obtaining a sequence of walks $B_0=B, B_1, \dots, B_n$, each of length at most $n+1$.
We claim that each $B_i$ is actually a geodesic of length $n+1$.
Let us start by proving that the length of each $B_i$ is $n+1$, so assume by contradiction that some $B_i$ has length $l \leq n$.
Then, by the aforementioned \cref{rmk:lift:distance}, for each $j = i+1, \dots, n$ the walk $B_j$ has length at most $l$.
In particular, this implies that $B_n$ is a walk of length strictly less than $n+1$.
Now, $B_n$ is a walk that joins $u'$ and $v'$, so their distance is strictly less than $n+1$.
This contradicts the fact that $T$ is a geodesic of length $n+1$ between $u'$ and $v'$, which must hold by the assumption the $(T,L,B,R)$ is a geodesic square of size $n+1$.
This shows that each $B_i$ has length $n+1$.
Each of these now must be a geodesic by essentially the same argument:
if some $B_i$ were not a geodesic, any geodesic $P$ between the endpoints of $B_i$ would have length strictly less than $n+1$, so lifting it would produce a geodesic between $u'$ and $v'$ of length strictly less than $n+1$, once again contradicting the fact that $T$ is a geodesic of length $n+1$.

Now let $B_i'$ be the subgeodesic of $B_i$ obtained by removing from each $B_i$ the vertex that it shares with $R$.
Let us write $B'$ for $B_0'$ and $T'$ for $B_{n-1}'$ (note that $B$ and $B'$ have the same height, whereas $T'$ and $T$ do not).
Let $L'$ be the subgeodesic of $L$ obtained by removing from $L$ the vertex that it shares with $T$.
We claim that there is a geodesic square $(T',L',B',R')$ of size $n$, so let us see that there exists a geodesic $R'$ of length $n$ between the terminal endpoints of $B'$ and $T'$.
Consider the last edge $w \rightarrow v$ of $B$ (the closest to $R$).
This lifts to some edge $\Pred(w) \rightarrow \Pred(v)$ that belongs to $B_1$.
Since $\Pred(w)$ is the immediate predecessor of $w$, there is a vertical edge $\Pred(w) \rightarrow w$.
Iterating this argument, we found a vertical path $\Pred^{n-1}(w) \rightarrow \Pred^{n-2}(w) \rightarrow \dots \rightarrow w$ which we call $R'$.
Then $(T',L',B',R')$ is a geodesic square of size $n$, as needed.
\end{proof}

\begin{theorem}
\label{thm:expanding:VERS:hyperbolic}
If a VERS is expanding, then its history graph $\AT$ is hyperbolic.
\end{theorem}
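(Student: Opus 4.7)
The plan is to apply Kaimanovich's ``no big squares'' criterion (\cref{thm:Kaimano}) and show that if the VERS is $n$-expanding then $\AT$ admits no geodesic squares of size at least $n$. Invoking \cref{lem:squares:lift} iteratively, the contrapositive converts this into the single task of ruling out geodesic $n$-squares: if there were a geodesic $m$-square for some $m \geq n$, repeated application of the lemma would produce a geodesic $n$-square.

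Suppose for contradiction that $\AT$ has a geodesic $n$-square $(T,L,B,R)$, with $T$ at level $k$ (closer to the root) and $B$ at level $k+n$. The core idea is to lift $B$ back up to a horizontal geodesic of length $n$ at level $k$ whose $n$-th expansion still contains $B$, thereby contradicting the $n$-expanding condition applied to this lifted path. The natural candidate for the lift is obtained via iterated \emph{spanning} lifts (\cref{def:spanning:lift}): replacing each edge of $B$ by its spanning lift yields a horizontal walk at level $k+n-1$, and $n$ iterations produce a horizontal walk $P$ at level $k$. Because spanning lifts of adjacent edges share a predecessor vertex, $P$ is well-defined, and its endpoints are the common predecessors $u, v$ of the endpoints of $B$, which by the vertical geodesics $L$ and $R$ are exactly the endpoints of $T$. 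By \cref{rmk:lift:distance} applied inductively, $P$ has length at most $n$; combined with $d_\AT(u,v) = n$ (since $T$ is a geodesic) this forces $P$ to be a path of length exactly $n$, with no backtracking and hence a horizontal geodesic in $\Gamma_k$.

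Next I would verify that $B$ sits inside $P^{(n)}$, the $n$-th expansion of $P$. By the very definition of spanning lift, each edge of $B$ appears in the expansion of its spanning lift, so by induction on the number of iterations $B \subseteq P^{(n)}$ as a subgraph. Since $P^{(n)} \subseteq \Gamma_{k+n}$, any path in $P^{(n)}$ between the endpoints $u', v'$ of $B$ is a horizontal path in $\AT$ of the same length, so $d_{P^{(n)}}(u', v') \geq d_\AT(u', v') = n$; on the other hand $B$ itself exhibits a path of length $n$ in $P^{(n)}$, so $d_{P^{(n)}}(u', v') = n$ and $B$ is a geodesic of length $n$ in $P^{(n)}$ joining successors of the endpoints of $P$. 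This directly contradicts the $n$-expanding condition of \cref{def:expanding:VERS} applied to the length-$n$ path $P$.

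The main obstacle I anticipate is the bookkeeping around spanning lifts: one must check that they assemble correctly into a walk, that iterated spanning lifts produce something of length exactly $n$ rather than a degenerate shorter walk, and that the subgraph inclusion $B \subseteq P^{(n)}$ is genuine rather than merely a containment of vertex sets. All three facts follow from the twin observations that lifting cannot increase distance (\cref{rmk:lift:distance}) and that the endpoints of $P$ coincide with those of $T$, pinning the length of $P$ from below by $d_\AT(u,v) = n$. Once this is in place, \cref{thm:Kaimano} delivers hyperbolicity of $\AT$.
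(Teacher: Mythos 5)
Your proposal is correct and follows essentially the same route as the paper's proof: reduce to excluding geodesic $n$-squares via \cref{lem:squares:lift} and \cref{thm:Kaimano}, then take iterated spanning lifts of the bottom geodesic $B$, use that $T$ is a geodesic between the $n$-fold predecessors to force the lifted walk $P$ to be a genuine path of length $n$, and contradict the $n$-expanding condition since the $n$-th expansion of $P$ contains $B$. Your extra bookkeeping (the subgraph inclusion $B \subseteq P^{(n)}$ and why $B$ remains a geodesic there) only makes explicit what the paper leaves implicit.
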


\begin{proof}
Assume that the given VERS is $n$-expanding.
We claim that its history graph $\AT$ has no squares of size $n$.
Observe that, if $\AT$ has no squares of size $n$, then it has no squares of size $n+1$ by \cref{lem:squares:lift}.
Thus, by \cref{thm:Kaimano}, our claim implies that $\AT$ is hyperbolic.

Assume by contradiction that $\AT$ has a square $(T,L,B,R)$ of size $n$.
If $B$ is $b_0 \leftrightarrow b_1 \leftrightarrow \dots \leftrightarrow b_n$, then consider the vertices $t_i \coloneq \Pred^n(b_i)$ for $i=0,1,\dots,n$.
If $e_i = b_{i-1} \leftrightarrow b_i$ are the edges of $B$, consider their spanning lifts $s_i = t_{i-1} \leftrightarrow t_i$.
Note that $T$ is not necessarily comprised of the edges $s_i$.

Observe that the $t_i$'s are all distinct, otherwise there would be a path between $t_0$ and $t_n$ that is shorter than $T$, so $T$ would not be a geodesic.
Then the edges $s_i$ are all distinct too, meaning that they form a path of length $n$ between $t_0$ and $t_n$.
This means that there is a path $s_1, \dots, s_n$ of length $n$ joining $t_0$ and $t_n$ and whose $n$-th expansion features the geodesic $B$, which is of length $n$.
This contradicts \cref{def:expanding:VERS}, concluding the proof.
\end{proof}

We will use this result in \cref{sec:ERSs} in order to prove \cref{thm:ERS:limit:space:is:VERS:limit:space}.

\begin{remark}
\label{rmk:expanding:not:iff}
If $\AT$ is hyperbolic, the VERS may not be expanding.
Indeed, paths that cause the expanding condition to fail may not occur in expansions of the base graph.
The VERS depicted in \cref{fig:grigorchuk:group:VERS}, which we will introduce in \cref{sub:self:similar:examples}, serves as an example:
a path comprised of $n$ edges whose colors are among $b$, $c$ and $d$ expands to a path with the same property, so its $n$-th expansion produces a geodesic $n$-square and thus the VERS is not expanding.
This is not an obstruction to hyperbolicity simply because expansions of the base graph never feature such paths.
\end{remark}


\section{Schreier graphs and limit spaces of finitely generated contracting self-similar groups}
\label{sec:self:similar:groups}

In this section we build VERSs whose history graphs are the finite Schreier graphs of finitely generated contracting self-similar groups and whose boundaries are the limit spaces of such groups.
Essentially, we show that the notions introduced in \cite{NekSSG} fit nicely into the framework of VERSs.

\subsection{Background on self-similar groups and Nekrashevych's self-similarity graphs}

Throughout this section, let $\Alphabet$ denote the set $\{0,1,\dots,d-1\}$, for some fixed $d \in \mathbb{N}_{\geq1}$ and let $\Lang = \Alphabet^*$.
This is consistent with our previous use of the symbols $\Alphabet$ and $\Lang$, since $\V$ will be a full shift throughout the section.

\subsubsection{Contracting self-similar groups}

Let us briefly recall the definitions and notation of self-similar groups.
See \cite{NekSSG} for much more detail.

A \textbf{self-similar group} is a group $G$ of permutations of $\Alphabet^*$ such that
$$\forall g \in G,\ x \in \Alphabet,\ \exists h \in G,\ y \in \Alphabet \mid \forall w \in \Alphabet^*,\ (xw)g = y (w)h.$$
The element $h$ will be denoted by $g_x$ (the \textit{restriction} of $g$ to $x$) and $\sigma_g$ will denote the permutation of $\Alphabet$ that maps $x$ to $y$.
Together, $\sigma_g$ and $\{g_i\}_{i\in\Alphabet}$ uniquely determine the element $g$.
See \cite[Section 1.5]{NekSSG} for more detail and explanations.

The notion of restriction extends to finite words with an inductive definition:
if $w \in \Alphabet^*$ and $x \in \Alphabet$, then $g_{xw}$ will denote $(g_x)_w$, i.e., the restriction to $w$ of the restriction of $g$ to $x$.
A self-similar group $G$ is \textbf{contracting} if there exists a finite $N \subseteq G$ such that, for all $g \in G$, there exists $k \in \mathbb{N}$ such that $g_w \in N$ for all words $w$ of length at least $k$.

\subsubsection{Self-similarity graphs and limit spaces}

The notions that we briefly introduce here are from \cite{LimitSets}, see also \cite[Section 3.7]{NekSSG}. 

Given a self-similar group $G$ generated by a finite set $S$, the \textbf{self-similarity graph} $\Sigma(G,S)$ has vertices $\Lang$, vertical edges $w \rightarrow xw$ for all $w \in \Lang$ and $x \in \Alphabet$ and horizontal edges $u \xrightarrow{s} v$ for all $u,v \in \Lang$ and $s \in S$.
This is an augmented tree as soon as the restrictions of all elements of $S$ belong to $S$.
Note that the connected components of the graph spanned by horizontal edges of $\Sigma(G,S)$ are exactly the finite Schreier graphs of the action of $G$ on the sets of words of fixed length.

As shown in \cite{LimitSets}, if $G$ is contracting then $\Sigma(G,S)$ is hyperbolic and different choices of $S$ produce quasi-isometric self-similarity graphs, so the Gromov boundary of $\Sigma(G,S)$ does not depend on the choice of $S$.
Moreover, the Gromov boundary of $\Sigma(G,S)$ is homeomorphic to the limit space of $G$, defined in \cite[Definition 3.3]{NekSSG} as a quotient of $\V$.

\subsection{VERSs for \texorpdfstring{$\Sigma(G,S)$}{Sigma(G,S)} of contracting self-similar groups}
\label{sub:self:similar:VERS}

Let us show how to build VERS for finitely generated contracting self-similar groups.

\begin{remark}
It is known that any finitely generated contracting self-similar group $G$ admits a finite generating set $S$ that is closed under taking restrictions, i.e., $\forall g \in S,\ \forall x \in \Alphabet,\ g_x \in S$.
Indeed, given any finite generating set $S'$, one can construct a finite generating set $S$ that is closed under restrictions by adding to $S'$ all restrictions of its elements, which are finitely many by the contracting property.
\end{remark}

Let us define a VERS $\R(G,S)$ based on $G$ and a finite generating set $S$ that is closed under restrictions.
\begin{itemize}
    \item $\V$ is a full shift, i.e., $\V=\Shift(\Sigma,t)$ where $V(\Sigma) = \{t\}$.
    \item The set of colors is $S$.
    \item The map $\kappa$ sends each $g \in S$ to $(t,t)$.
    \item For each $g \in S$, the replacement graph $R_g$ has, for all $x,y\in\Alphabet$ and $h \in S$ such that $\sigma_h(x)=y$ and $h_x=g$, an $h$-colored edge from $x\mathrm{i}$ to $y\mathrm{t}$.
    \item The base graph $\Gamma_0$ consists of one $g$-colored loop for each $g \in S$.
\end{itemize}
This defines a VERS that we denote by $\R(G,S)$.

\begin{lemma}
\label{lem:self:similar:parallel:edges}
The expansions of any $\R(G,S)$ never feature parallel edges with the same color.
\end{lemma}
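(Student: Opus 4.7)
The plan is to prove the statement by induction on $n$, showing that each expansion $\Gamma_n$ of $\R(G,S)$ has no two parallel edges sharing the same color. The base case is immediate: $\Gamma_0$ consists of exactly one $g$-colored loop at $\varepsilon$ for each $g \in S$, so no two distinct edges of $\Gamma_0$ can share a color.

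For the inductive step, assume that $\Gamma_n$ has the property and suppose, aiming at a contradiction, that $\Gamma_{n+1}$ contains two distinct edges $e_1$ and $e_2$ that are parallel and share a common color $h \in S$. Then both are edges from some $xu$ to some $yv$, with $x,y\in\Alphabet$ and $u,v\in\Alphabet^n$. By the definition of VERS expansion, each $e_i$ arises from expanding a unique edge $f_i$ of $\Gamma_n$ using a copy of $R_{\col(f_i)}$; more specifically, $f_i$ must go from $u$ to $v$, and the defining rule for $R_g$ forces $\col(f_i)=h_x$, since $R_g$ contributes an $h$-colored edge from $x\mathrm{i}$ to $y\mathrm{t}$ precisely when $\sigma_h(x)=y$ and $h_x=g$. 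The crucial point is that the restriction $h_x$ is uniquely determined by $h$ and $x$, so $\col(f_1)=\col(f_2)=h_x$.

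Thus $f_1$ and $f_2$ are parallel same-colored edges in $\Gamma_n$, and by the inductive hypothesis they coincide. A single edge in $\Gamma_n$ is replaced by a single copy of $R_{h_x}$, and within that copy there is at most one $h$-colored edge from $x\mathrm{i}$ to $y\mathrm{t}$ (determined by $h$, $x$, and the forced value $y=\sigma_h(x)$). Hence $e_1=e_2$, contradicting their distinctness. This closes the induction. The only genuinely essential ingredient, beyond bookkeeping, is the uniqueness of the restriction $h_x$; this is what collapses the two potential source edges in $\Gamma_n$ into the same one. I do not expect any real obstacle: the argument is a direct unraveling of the expansion rule together with the defining property of self-similarity.
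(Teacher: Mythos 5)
Your proof is correct and follows essentially the same route as the paper: induction on the expansion level, passing to the (unique) spanning lifts of the two offending edges, and exploiting that the restriction $h_x$ is determined by $h$ and $x$ together with the fact that a replacement graph $R_g$ has at most one $h$-colored edge $x\mathrm{i} \to y\mathrm{t}$. The paper merely organizes the same ingredients in the opposite order (first showing the lifts are distinct, then deriving a contradiction on their colors), so there is no substantive difference.
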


\begin{proof}
Assume by contradiction that some $\Gamma_n$ has two edges $e_1$ and $e_2$ from $xu$ to $yv$ ($x,y \in \Alphabet$, $u,v \in \Lang$) with the same color $g$.
Proceeding by induction on $n$, let us assume that $\Gamma_i$ has no parallel edges of the same colors for any $i \leq n-1$, which is true for the base graph $\Gamma_0$.
Let $f_1$ and $f_2$ be the spanning lifts of $e_1$ and $e_2$, respectively (\cref{def:spanning:lift}).
By the definition of $\R(G,S)$, no replacement graph features parallel edges of the same color, so $f_1$ and $f_2$ must be different edges.
They are both edges from $u$ to $v$, so they cannot have the same color by induction hypothesis, say that $g_1 \neq g_2$ are their respective colors.
By definition of $\R(G,S)$, for each $i \in \{1,2\}$, since $e_i$ is an edge of $\Gamma_n$ we have that $g_x=g_i$, so ultimately $g_1=g_2$, a contradiction.
\end{proof}

\begin{theorem}
\label{thm:self:similar:limit:sets}
The history graph $\AT$ determined by the VERS $\R(G,S)$ is the same as the self-similarity graph $\Sigma(G,S)$.
\end{theorem}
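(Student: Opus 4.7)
The plan is to match $\AT$ and $\Sigma(G,S)$ directly on vertices, vertical edges, and horizontal edges. Since $\V$ is a full shift with alphabet $\Alphabet$, the vertex set $\Lang\V$ of $\AT$ equals $\Alphabet^* = \Lang$, the vertex set of $\Sigma(G,S)$. The vertical edges are identical by inspection of the two definitions: in both graphs they are exactly the edges $w \to xw$ for $w \in \Lang$ and $x \in \Alphabet$.

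The bulk of the work is to match the horizontal edges. I would prove by induction on $n$ the following statement: for every $u \in \Alphabet^n$ and every $s \in S$, the graph $\Gamma_n$ contains exactly one $s$-colored edge issuing from $u$, and that edge is $u \xrightarrow{s} (u)s$. Once this is established, the horizontal edges of $\AT$ are exactly the edges $u \xrightarrow{s} (u)s$ for $u \in \Lang$ and $s \in S$, which is precisely the set of horizontal edges of $\Sigma(G,S)$.

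The base case $n=0$ is immediate: by construction $\Gamma_0$ has exactly one $s$-colored loop at $\varepsilon$ for each $s \in S$, and $(\varepsilon)s = \varepsilon$. For the inductive step, fix $w = xu \in \Alphabet^{n+1}$ and $h \in S$, and set $g \coloneq h_x$, which lies in $S$ since $S$ is closed under restrictions. By the inductive hypothesis $\Gamma_n$ contains a unique $g$-edge out of $u$, namely $u \xrightarrow{g} (u)g$. The definition of $R_g$ places an $h$-colored edge from $x\mathrm{i}$ to $\sigma_h(x)\mathrm{t}$ precisely because $h_x = g$, so expanding $u \xrightarrow{g} (u)g$ produces in $\Gamma_{n+1}$ the $h$-edge
\[
xu \xrightarrow{h} \sigma_h(x) \cdot (u)g = \sigma_h(x) \cdot (u)h_x = (xu)h,
\]
where the last equality is the defining formula of the self-similar action. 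Conversely, any $h$-edge out of $xu$ in $\Gamma_{n+1}$ must arise from expanding some edge of $\Gamma_n$ out of $u$ whose color $g$ satisfies $h_x = g$; this forces $g = h_x$ and, by the inductive hypothesis together with \cref{lem:self:similar:parallel:edges}, there is only one such source edge and the resulting $h$-edge is unique.

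There is no deep obstacle here; the proof is essentially a bookkeeping exercise. The only point requiring care is the interplay between the expansion operation and the restriction formula $(xu)h = \sigma_h(x)(u)h_x$, which is precisely what makes the combinatorial rule defining $R_g$ in $\R(G,S)$ mirror the dynamical behavior encoded by Nekrashevych's self-similarity graph.
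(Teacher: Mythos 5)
Your proposal is correct and follows essentially the same route as the paper: an induction on word length that matches the definition of $R_g$ against the restriction formula $(xu)h = \sigma_h(x)\,(u)h_x$, together with the observation that every new edge out of $xu$ comes from expanding an edge out of $u$ (the spanning lift). The only organizational difference is that your single invariant --- exactly one $s$-colored edge issuing from each $u$ in $\Gamma_n$, namely $u \xrightarrow{s} (u)s$ --- handles both inclusions and the multiplicity question at once, thereby subsuming the role the paper assigns to \cref{lem:self:similar:parallel:edges}.
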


\begin{proof}
The set of vertices and of vertical edges of the two graphs are the same, so let us show that the horizontal edges are the same too.
Since there are no parallel edges with the same color by \cref{lem:self:similar:parallel:edges}, it suffices to show that $\Sigma(G,S)$ has a $g$-colored edge from $u$ to $v$ if and only if $\AT$ has that same edge.

Assume that $u \xrightarrow{g} v$ is a horizontal edge of $\AT$.
If $u$ and $v$ are words of length $1$, then they are the starting vertex $\varepsilon$ and the edge is $\varepsilon \xrightarrow{g} \varepsilon$ for some $g \in S$.
This is also an edge of $\Sigma(G,S)$, since clearly $g(\varepsilon) = \varepsilon$.
Now let us assume that the edges of $\AT$ of height at most $n$ are also edges of $\Sigma(G,S)$ and consider an edge $u \xrightarrow{g} v$ of $\AT$, where $u=iu'$ and $v=jv'$ for some $i,j \in \Alphabet$ and $u',v' \in \Lang$.
Then $u' \xrightarrow{g'} v'$ is an edge of $\AT$, for some $g' \in S$ such that $\sigma_{g'}(i)=j$ and $g_i=g'$.
By the induction hypothesis, $u' \xrightarrow{g'} v'$ is also an edge of $\Sigma(G,S)$, so $g'(u')=v'$.
All together, this implies that $g(u) = g(iu') = \sigma_g(i) g_i(u') = j g'(u') = jv' = v$, so $g(u)=v$ and thus $u \xrightarrow{g} v$ is an edge of $\Sigma(G,S)$.

Assume that $u \xrightarrow{g} v$ is a horizontal edge of $\Sigma(G,S)$.
Let $u=iu'$ and $v=jv'$ for some $i,j \in \Alphabet$ and $u',v' \in \Lang$.
Since $g(u)=v$, we have $\sigma_g(i) g_i(u') = j v'$, so $j=\sigma_g(i)$ and $v'=g_i(u')$.
Since $g_i \in S$, $\Sigma(G,S)$ has an edge $u' \xrightarrow{g_i} v'$.
By the same induction argument used in the previous paragraph, we assume that this is also an edge of $\AT$.
By the definition of $\AT$, this means that $iu' \xrightarrow{g} \sigma_g(i)v' = u \xrightarrow{g} v$ is an edge of $\AT$.
\end{proof}

As an immediate consequence, we have the following fact.

\begin{corollary}
If $G$ is a finitely generated contracting self-similar group and $S$ a finite set of generators that is closed under restrictions, the limit space of the VERS $\R(G,S)$ is the same as the limit space of $G$.
\end{corollary}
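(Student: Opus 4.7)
The plan is to read off this corollary as a direct consequence of \cref{thm:self:similar:limit:sets} together with the standard facts from \cite{LimitSets} (and \cite[Section 3.7]{NekSSG}) that were recalled in the background subsection.

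First, I would invoke \cref{thm:self:similar:limit:sets} to identify the history graph $\AT_{\R(G,S)}$ with Nekrashevych's self-similarity graph $\Sigma(G,S)$, as graphs equipped with the same bipartition into vertical and horizontal edges. Since by hypothesis $G$ is contracting and $S$ is a finite generating set closed under restrictions, the results recalled from \cite{LimitSets} guarantee that $\Sigma(G,S)$ is hyperbolic. Consequently $\AT_{\R(G,S)}$ is hyperbolic, which is precisely the condition required by \cref{def:VERS:limit:space} for the limit space of the VERS to be defined.

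Next, I would simply compare definitions. By \cref{def:VERS:limit:space}, the limit space of $\R(G,S)$ is the Gromov boundary $\partial\AT_{\R(G,S)}$, which under the identification above coincides with $\partial\Sigma(G,S)$. The results of \cite{LimitSets} (again recalled in the background) state that $\partial\Sigma(G,S)$ is homeomorphic to the limit space of $G$ as introduced in \cite[Definition 3.3]{NekSSG}. Chaining these homeomorphisms concludes the argument.

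There is essentially no obstacle: the theorem immediately before has already done the real work of matching the two combinatorial objects, and everything else is a citation of established facts about self-similarity graphs. The only minor point to be careful about is verifying that the bipartition of edges into vertical and horizontal used in \cref{def:history:graph} agrees with the one used in \cite{LimitSets}, so that the notion of Gromov boundary is computed with respect to the same graph structure on both sides; this is immediate from how $\Sigma(G,S)$ was described in the background.
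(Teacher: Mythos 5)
Your argument is correct and follows the same route the paper intends: the corollary is stated there as an immediate consequence of \cref{thm:self:similar:limit:sets}, combined with the facts recalled from \cite{LimitSets} that $\Sigma(G,S)$ is hyperbolic for contracting $G$ and that its Gromov boundary is the limit space of $G$. Your extra remark about matching the vertical/horizontal bipartitions is a reasonable sanity check but adds nothing beyond what the identification in \cref{thm:self:similar:limit:sets} already provides.
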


Some of these VERS are not expanding, see \cref{rmk:expanding:not:iff}.
However, even without knowing if such a VERS is expanding, we can conclude that its history graph is hyperbolic by \cref{thm:self:similar:limit:sets} together with \cite[Theorem 4.2]{LimitSets}.

These VERSs allow us to construct the Schreier graphs of any finitely generated contracting self-similar group through a simple recursive procedure, as VERS expansions.
The literature witnesses multiple ways to recursively generate such objects in many particular cases (see e.g. \cite{GLN,BGJRT}) and to generate tile graphs of bounded automata groups (see \cite{BDN}). 
However, as far as the authors know, this is the first unified procedure for such aim.

\subsection{Examples of \texorpdfstring{$\R(G,S)$}{R(G,S)}}
\label{sub:self:similar:examples}

If $\Autom$ is a finite automaton whose set $S$ of states generates a contracting self-similar group $G$, consider the following VERS.
\begin{itemize}
    \item $\V$ is a full shift, i.e., $\V=\Shift(\Sigma,t)$ for $V(\Sigma)=\{t\}$.
    \item The set of colors is $S$.
    \item The map $\kappa$ sends every $q \in S$ to $(t,t)$.
    \item For each $q \in S$, the replacement graph $R_q$ consists of the following edges:
    for each transition in $\Autom$ from $q'$ to $q$ that reads $x|y$, $R_q$ has a $q'$-colored edge from $x\mathrm{i}$ to $y\mathrm{t}$.
    \item The base graph $\Gamma_0$ consists of one $q$-colored loop for each $q \in S$.
\end{itemize}
It is readily verified that this is the VERS $\R(G,S)$ defined in \cref{sub:self:similar:VERS}.

\subsubsection{The Basilica group} \label{subsec:basilica}

The basilica group is the contracting self-similar group $\BasilicaGp$ generated by the automaton depicted in \cref{fig:basilica:group:automaton}.
Its Schreier graphs were completely classified in \cite{DDMT} and a recursive procedure to generate them can be found in \cite{BGJRT}.
The limit space of the Basilica group is homeomorphic to the well-known Basilica Julia set for the polynomial $z^2-1$, depicted in \cref{fig:basilica}.

\begin{figure}
\centering
\begin{tikzpicture}
    \node[state,gray] (id) at (0,0) {$1$};
    \node[state,red] (a) at (-2.5,1) {$a$};
    \node[state,blue] (b) at (-2.5,-1) {$b$};
    \draw[edge] (id) to[out=-30,in=30,min distance=1cm] node[right,align=center]{$0|0$\\$1|1$} (id);
    \draw[edge] (a) to[out=240,in=120] node[left]{$0|0$} (b);
    \draw[edge] (b) to[out=60,in=300] node[right]{$0|1$} (a);
    \draw[edge] (a) to[out=0,in=135] node[above]{$1|1$} (id);
    \draw[edge] (b) to[out=0,in=225] node[below]{$1|0$} (id);
\end{tikzpicture}
\caption{The automaton for the basilica group $\BasilicaGp$.}
\label{fig:basilica:group:automaton}
\end{figure}
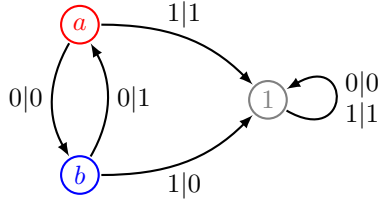

\begin{figure}
\centering
\includegraphics[width=.5\textwidth]{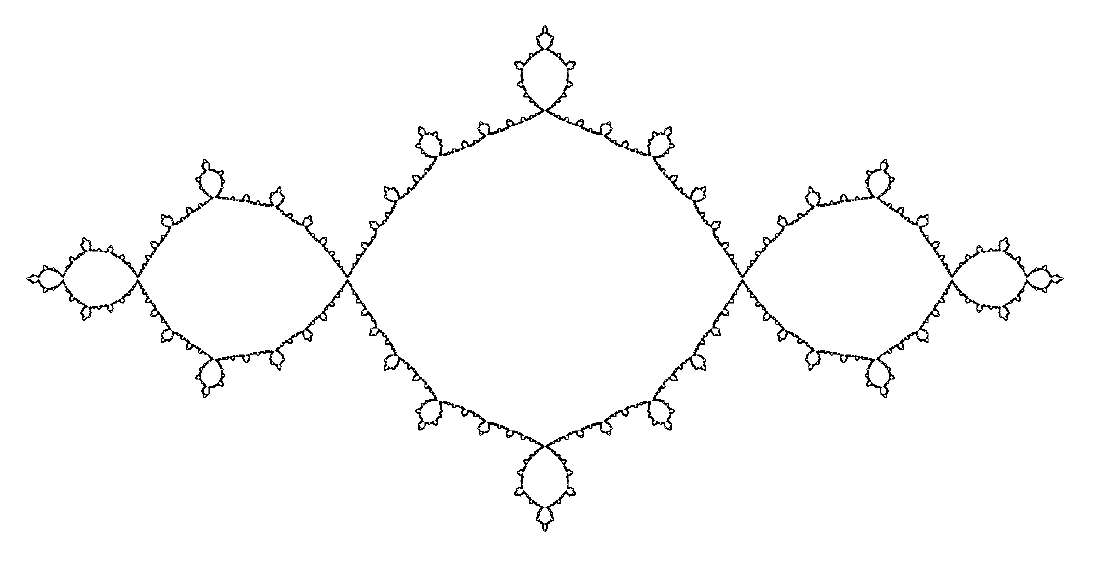}
\caption{The basilica Julia set (\href{https://commons.wikimedia.org/wiki/File:Julia_z2-1.png}{image by Prokofiev}).}
\label{fig:basilica}
\end{figure}

The replacement graphs of the associated VERS $\R(\BasilicaGp,S)$ are depicted in \cref{fig:basilica:group:VERS}.
Since every $1$-colored edge is a loop, we depict the replacement graph $R_1$ directly in the case in which the $1$-colored edge being expanded is a loop, with $\mathrm{i}=\mathrm{t}$ denoted by $\mathrm{x}$ instead.

\begin{figure}
\centering
\begin{tikzpicture}
    \begin{scope}[xshift=-4.75cm]
    \node (it) at (0,0) {$\mathrm{x}$};
    \node (0it) at (-.75,-1.5) {$0\mathrm{x}$};
    \node (1it) at (.75,-1.5) {$1\mathrm{x}$};
    \draw[edge,gray] (it) to[out=-30,in=30,min distance=1cm] node[right]{$1$} (it);
    \draw (it) to (0it);
    \draw (it) to (1it);
    \draw[edge,gray] (0it) to[out=240,in=300,min distance=1cm] node[below]{$1$} (0it);
    \draw[edge,red] (1it) to[out=-30,in=30,min distance=1cm] node[right]{$a$} (1it);
    \draw[edge,gray] (1it) to[out=240,in=300,min distance=1cm] node[below]{$1$} (1it);
    \draw[edge,blue] (1it) to node[above]{$b$} (0it);
    \end{scope}
    \node (i) at (-1,0) {$\mathrm{i}$};
    \node (t) at (1,0) {$\mathrm{t}$};
    \node (0i) at (-1.5,-1.5) {$0\mathrm{i}$};
    \node (1i) at (-.5,-1.5) {$1\mathrm{t}$};
    \node (0t) at (.5,-1.5) {$0\mathrm{i}$};
    \node (1t) at (1.5,-1.5) {$1\mathrm{t}$};
    \draw[edge,red] (i) to node[above]{$a$} (t);
    \draw (i) to (0i);
    \draw (i) to (1i);
    \draw (t) to (0t);
    \draw (t) to (1t);
    \draw[edge,blue] (0i) to[out=-30,in=210] node[below]{$b$} (1t);
    \begin{scope}[xshift=4.75cm]
    \node (i) at (-1,0) {$\mathrm{i}$};
    \node (t) at (1,0) {$\mathrm{t}$};
    \node (0i) at (-1.5,-1.5) {$0\mathrm{i}$};
    \node (1i) at (-.5,-1.5) {$1\mathrm{t}$};
    \node (0t) at (.5,-1.5) {$0\mathrm{i}$};
    \node (1t) at (1.5,-1.5) {$1\mathrm{t}$};
    \draw[edge,blue] (i) to node[above]{$b$} (t);
    \draw (i) to (0i);
    \draw (i) to (1i);
    \draw (t) to (0t);
    \draw (t) to (1t);
    \draw[edge,red] (0i) to[out=-30,in=210] node[below]{$a$} (0t);
    \end{scope}
\end{tikzpicture}
\caption{The replacement graphs of the VERS $\R(\BasilicaGp,S)$.}
\label{fig:basilica:group:VERS}
\end{figure}

\subsubsection{The Grigorchuk group}

The Grigorchuk group is the contracting self-similar group $\Grig$ generated by the automaton depicted in \cref{fig:grigorchuk:group:automaton}.
Its Schreier graphs were described in \cite{Vorobets} and they were associated to a substitution shift which describes symbolically replacement rules that generate the graphs, see \cite{GLN,GLN2}.
In this case, the limit space is homeomorphic to the unit interval $[0,1]$.

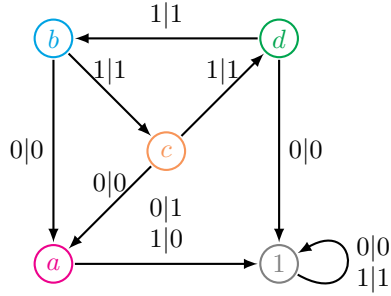
\begin{figure}
\centering
\begin{tikzpicture}
    \node[state,Magenta] (a) at (-1.5,-1.5) {$a$};
    \node[state,Cerulean] (b) at (-1.5,1.5) {$b$};
    \node[state,Peach] (c) at (0,0) {$c$};
    \node[state,Green] (d) at (1.5,1.5) {$d$};
    \node[state,gray] (id) at (1.5,-1.5) {$1$};
    \draw[edge] (id) to[out=-30,in=30,min distance=1cm] node[right,align=center]{$0|0$\\$1|1$} (id);
    \draw[edge] (a) to node[above,align=center]{$0|1$\\$1|0$} (id);
    \draw[edge] (b) to node[left]{$0|0$} (a);
    \draw[edge] (b) to node[above]{$1|1$} (c);
    \draw[edge] (c) to node[above]{$0|0$} (a);
    \draw[edge] (c) to node[above]{$1|1$} (d);
    \draw[edge] (d) to node[right]{$0|0$} (id);
    \draw[edge] (d) to node[above]{$1|1$} (b);
\end{tikzpicture}
\caption{The automaton for the Grigorchuk group $\Grig$.}
\label{fig:grigorchuk:group:automaton}
\end{figure}

The replacement graphs of the associated VERS $\R(\Grig,S)$ are depicted in \cref{fig:grigorchuk:group:VERS}.
Once again, the replacement graph $R_1$ is depicted in the case in which the $1$-colored edge being expanded is a loop.

\begin{figure}
\centering
\begin{tikzpicture}
    \begin{scope}[xshift=-3.75cm,yshift=-2cm]
    \node (i) at (-1,0) {$\mathrm{i}$};
    \node (t) at (1,0) {$\mathrm{t}$};
    \node (0i) at (-1.5,-1.5) {$0\mathrm{i}$};
    \node (1i) at (-.5,-1.5) {$1\mathrm{t}$};
    \node (0t) at (.5,-1.5) {$0\mathrm{i}$};
    \node (1t) at (1.5,-1.5) {$1\mathrm{t}$};
    \draw[edge,Magenta] (i) to node[above]{$a$} (t);
    \draw (i) to (0i);
    \draw (i) to (1i);
    \draw (t) to (0t);
    \draw (t) to (1t);
    \draw[edge,Cerulean] (0i) to[out=335,in=205] node[below]{$b$} (0t);
    \draw[edge,Peach] (0i) to[out=275,in=265] node[below]{$c$} (0t);
    \end{scope}
    \begin{scope}[xshift=-3.75cm,yshift=2cm]
    \node (i) at (-1,0) {$\mathrm{i}$};
    \node (t) at (1,0) {$\mathrm{t}$};
    \node (0i) at (-1.5,-1.5) {$0\mathrm{i}$};
    \node (1i) at (-.5,-1.5) {$1\mathrm{t}$};
    \node (0t) at (.5,-1.5) {$0\mathrm{i}$};
    \node (1t) at (1.5,-1.5) {$1\mathrm{t}$};
    \draw[edge,Cerulean] (i) to node[above]{$b$} (t);
    \draw (i) to (0i);
    \draw (i) to (1i);
    \draw (t) to (0t);
    \draw (t) to (1t);
    \draw[edge,Green] (1i) to[out=-30,in=210] node[below]{$d$} (1t);
    \end{scope}
    \begin{scope}
    \node (i) at (-1,0) {$\mathrm{i}$};
    \node (t) at (1,0) {$\mathrm{t}$};
    \node (0i) at (-1.5,-1.5) {$0\mathrm{i}$};
    \node (1i) at (-.5,-1.5) {$1\mathrm{t}$};
    \node (0t) at (.5,-1.5) {$0\mathrm{i}$};
    \node (1t) at (1.5,-1.5) {$1\mathrm{t}$};
    \draw[edge,Peach] (i) to node[above]{$c$} (t);
    \draw (i) to (0i);
    \draw (i) to (1i);
    \draw (t) to (0t);
    \draw (t) to (1t);
    \draw[edge,Cerulean] (1i) to[out=-30,in=210] node[below]{$b$} (1t);
    \end{scope}
    \begin{scope}[xshift=3.75cm,yshift=2cm]
    \node (i) at (-1,0) {$\mathrm{i}$};
    \node (t) at (1,0) {$\mathrm{t}$};
    \node (0i) at (-1.5,-1.5) {$0\mathrm{i}$};
    \node (1i) at (-.5,-1.5) {$1\mathrm{t}$};
    \node (0t) at (.5,-1.5) {$0\mathrm{i}$};
    \node (1t) at (1.5,-1.5) {$1\mathrm{t}$};
    \draw[edge,Green] (i) to node[above]{$d$} (t);
    \draw (i) to (0i);
    \draw (i) to (1i);
    \draw (t) to (0t);
    \draw (t) to (1t);
    \draw[edge,Peach] (1i) to[out=-30,in=210] node[below]{$c$} (1t);
    \end{scope}
    \begin{scope}[xshift=3.75cm,yshift=-2cm]
    \node (it) at (0,0) {$\mathrm{x}$};
    \node (0it) at (-.75,-1.5) {$0\mathrm{x}$};
    \node (1it) at (.75,-1.5) {$1\mathrm{x}$};
    \draw[edge,gray] (it) to[out=-30,in=30,min distance=1cm] node[right]{$1$} (it);
    \draw (it) to (0it);
    \draw (it) to (1it);
    \draw[edge,gray] (0it) to[out=240,in=300,min distance=1cm] node[below]{$1$} (0it);
    \draw[edge,gray] (1it) to[out=240,in=300,min distance=1cm] node[below]{$1$} (1it);
    \draw[edge,Green] (0it) to[out=150,in=210,min distance=1cm] node[left]{$d$} (0it);
    \draw[edge,Magenta] (0it) to[out=30,in=150] node[above]{$a$} (1it);
    \draw[edge,Magenta] (1it) to[out=210,in=330] node[above]{$a$} (0it);
    \end{scope}
\end{tikzpicture}
\caption{The replacement graphs of the VERS $\R(\Grig,S)$.}
\label{fig:grigorchuk:group:VERS}
\end{figure}


\section{History graphs of iterated function systems}
\label{sec:IFS}

In this section we associate to any injective post-critically finite IFS a VERS whose limit space is homeomorphic to the attractor of the IFS.

\subsection{Background on injective post-critically finite IFSs}
\label{sub:IFS:background}

Let us briefly revise the basics of IFSs and their attractors.
A standard reference is \cite[Chapter 9]{Falcon}.

Throughout this section, let $X$ be a compact subspace of $\mathbb{R}^m$ for some $m\geq1$ and let $\Alphabet = \{1,\dots,N\}$ for some $N \in \mathbb{N}_{\geq1}$.

A map $\phi \colon X \to X$ is \textbf{contracting} if there exists some $0 < C < 1$ (called \textbf{contracting ratio}) such that
$$\forall p,q \in X,\ \dist_X \left( p\phi, q\phi\right) \leq C \cdot \dist_X(p,q).$$

An \textbf{iterated function system} (\textbf{IFS} for short) is a finite set $\Phi = \{ \phi_i \}_{i\in\Alphabet}$ of contracting maps $\phi_i \colon X \to X$.
If $C_1, \dots, C_N$ are contracting ratios for $\phi_1, \dots, \phi_N$, respectively, then we say that $\max\{C_1,\dots,C_N\}$ is a \textbf{contracting ratio} of $\Phi$.

The foundational fact in the theory of IFSs is that, for each IFS $\Phi = \{ \phi_i \}_{i\in\Alphabet}$, there exists a unique non-empty compact subset $K \subseteq X$ such that $K = \bigcup_{i\in\Alphabet} (K)\phi_i$, which is called the \textbf{attractor} of $\Phi$.
It is also a standard fact that, for every non-empty compact $A \subseteq X$, one has $K = \lim_{n \to \infty} \left( (A)F_\Phi^n \right)$, where $F_\Phi$ denotes the \textit{Hutchinson operator} mapping each subset $A$ of $X$ to the closure of $\bigcup_{i\in\Alphabet}(A)\phi_i$.
In particular, every point $p \in K$ has an \textbf{address} (possibly more than one) $\alpha = \dots a_2 a_1 \in \Alphabet^{-\omega}$ such that $\bigcap_{n\geq1}(A)\phi_{a_n} \dots \phi_{a_1} = \{p\}$ for every compact $A \subseteq X$.

To avoid cumbersome notation, for all $w = x_n \dots x_1 \in \Alphabet^*$ we will write $\phi_w = \phi_{x_n \dots x_1}$ to denote the composition $\phi_{x_n} \dots \phi_{x_1}$.
Since the sets $(K)\phi_w$ will often appear, we will denote them by $K_w$.
With this notation, every point $p \in K$ has one or more addresses $\alpha = \dots a_2 a_1 \in \Alphabet^{-\omega}$ such that $\bigcap_{n\geq1} K_{a_n \dots a_1} = \{p\}$.
Moreover, note that $K_{xw} = (K_x)\phi_w \subset (K)\phi_w = K_w$ for all $x \in \Alphabet$ and $w \in \Alphabet^*$.

As done in \cite{Kigami}, let $\chi \colon \Alphabet^{-\omega} \to K$ be the map that sends each address $\dots a_2 a_1 \in \Alphabet^{-\omega}$ to the unique point in the intersection $\bigcap_{i \geq 1} K_{a_i}$.
The \textbf{critical points} of $\Phi$ are the elements of the set $\Crit \coloneqq \bigcup_{i \neq j} \left(K_i \cap K_j\right) \subseteq K$ and the \textbf{critical symbols} are the elements of $\mathcal{C} \coloneqq (\Crit)\chi^{-1}$.
We define the set of \textbf{post-critical symbols} as $\mathcal{PC} \coloneqq \bigcup_{n\geq1}\left(\mathcal{C}\right)\sigma^n$, where $\sigma$ is the shift map $(\ldots i_3 i_2 i_1)\sigma=\ldots i_3 i_2$.
The IFS $\Phi$ is said to be \textbf{post-critically finite} (\textbf{pcf} for short) if $\mathcal{PC}$ is finite.
If the IFS is injective, the set $\mathcal{PC}$ describes all possible pre-images of $\mathcal{C}$ via $\Phi$.
The \textbf{post-critical points} are the elements of the set $\PCrit \coloneqq \left(\mathcal{PC}\right)\chi$.

Note that, if $\Phi$ is pcf, then the connected components of $K$ are finitely ramified fractals in the sense of \cite{T08}, where the cells are $K_w$ for all $w \in \Alphabet^*$.

\begin{example}
\label{ex:IFS:Sierpiński:triangle}
The Sierpiński triangle (\cref{fig:Sierpiński:triangle}) can be realized as the attractor of the IFS $\ST = \{\phi_1, \phi_2, \phi_3\}$, with
$$(x,y)\phi_1 = \frac{1}{2}(x,y),\;
(x,y)\phi_2 = \frac{1}{2}(x,y)+\left(\frac{1}{2},0\right),\;
(x,y)\phi_3 = \frac{1}{2}(x,y)+\left(\frac{1}{4},\frac{\sqrt{3}}{4}\right).$$
This IFS is clearly injective.
Its critical points are $a=(\frac{1}{2},0)$, $b=(\frac{1}{4},\frac{\sqrt{3}}{4})$ and $c=(\frac{3}{4},\frac{\sqrt{3}}{4})$, and the critical symbols are $1^{-\omega}2$, $2^{-\omega}1$, $2^{-\omega}3$, $3^{-\omega}2$, $1^{-\omega}3$ and $3^{-\omega}1$.
Its post-critical symbols are $1^{-\omega}$, $2^{-\omega}$ and $3^{-\omega}$, so the post-critical points are $l = (0,0)$, $r = (1,0)$ and $t = (\frac{1}{2},\frac{\sqrt{3}}{2})$ and thus it is post-critically finite.
\end{example}

\begin{figure}
\centering
\includegraphics[width=.3333\textwidth]{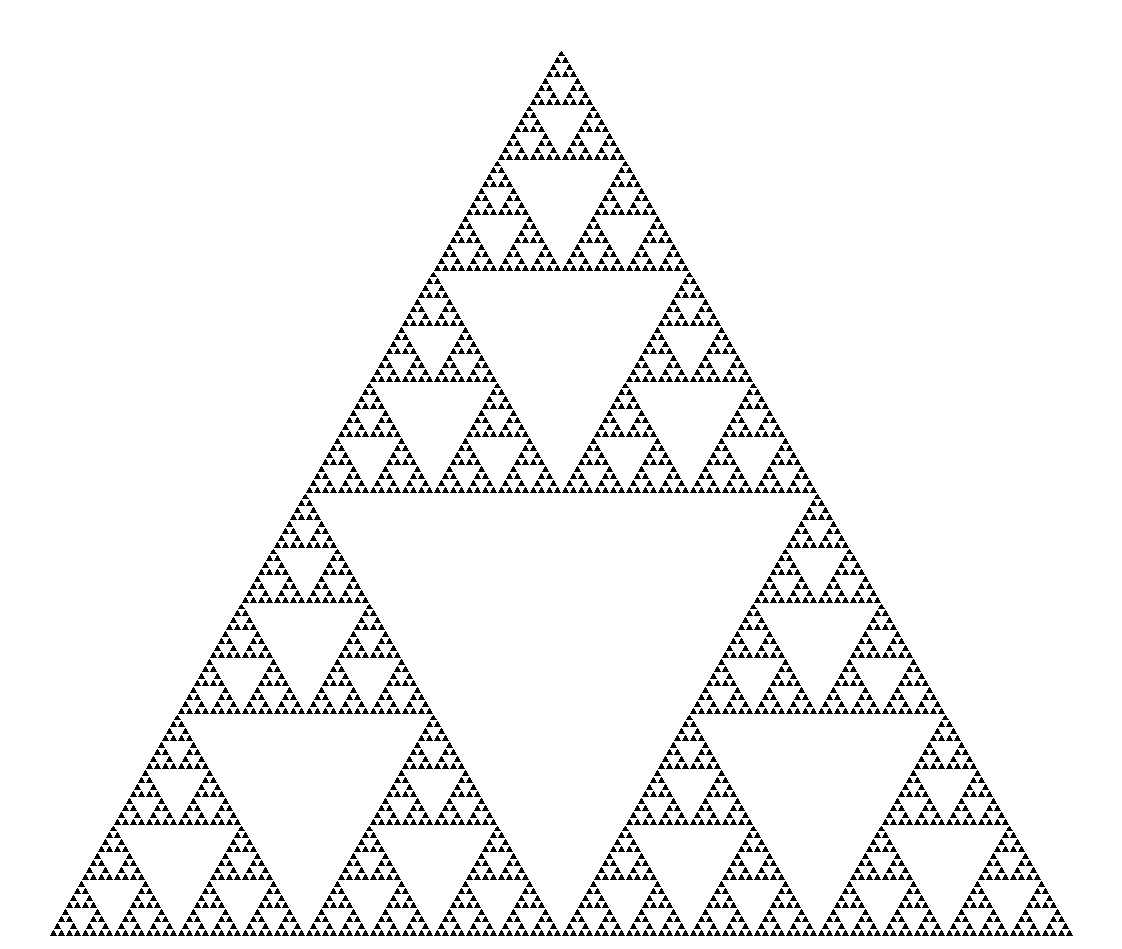}
\caption{The Sierpiński triangle.}
\label{fig:Sierpiński:triangle}
\end{figure}

\subsection{VERSs for injective pcf IFSs}

Post-critical finiteness helps us designing a VERS that keeps track of the dynamic of the IFS. Namely, each vertex represents some $K_w$ and each edge represents a point of intersection between two such sets.

To any injective pcf IFS $\Phi=\{\phi_i\}_{i=1}^N$ we associate the following VERS $\R_\Phi$.
\begin{itemize}
    \item $\V$ is the full shift on $\Alphabet$ (i.e., $V(\Sigma)=\{s\}$).
    \item The set of colors is $C = \{c_0\} \cup C_\Crit \cup C_\PCrit$, where
    $$C_\Crit = \left\{(x,y)_p \mid K_x \cap K_y = p \in \Crit,\ x<y \text{ in } \Alphabet \right\},$$
    $$C_{\PCrit} = \left\{ \PCritColor{p}{q}{x}{y} \mid p \in K_x \cap \PCrit,\ q \in K_y \cap \PCrit,\ x<y \text{ in } \Alphabet \right\}.$$
    \item The map $\kappa \colon C \to V(\Sigma) \times V(\Sigma)$ sends every $c \in C$ to $s$.
    \item The base graph consists of a $c_0$-colored loop.
    \item The replacement graphs $R_c$ have the following edges.
    \begin{itemize}
        \item The replacement graph $R_{c_0}$ has $c_0$-colored loops at every vertex and a $(x,y)_p$-colored edge $x\mathrm{i} \to y\mathrm{t}$ for all $x<y$ and all $p \in K_x \cap K_y$.
        \item For every $x<y$, the replacement graph $R_{(x,y)_p}$ has a $\PCritColor{p_1}{q_1}{z}{v}$-colored edge $z\mathrm{i} \to v\mathrm{t}$ for all $z<v$, all $p_1=(p)\phi_x^{-1} \in K_z$ and $q_1=(p)\phi_y^{-1} \in K_v$.
        \item For every $x<y$ and $p_1, q_1 \in K$, the replacement graph $R_{\PCritColor{p_1}{q_1}{x}{y}}$ has a $\PCritColor{p_2}{q_2}{z}{v}$-colored edge $z\mathrm{i} \to v\mathrm{t}$ for all $z<v$ and all $p_2=(p_1)\phi_x^{-1} \in K_z$ and $q_2=(q_1)\phi_y^{-1} \in K_v$.
    \end{itemize}
\end{itemize}

If $\Phi$ is an injective pcf IFS, its \textbf{history graph} is the history graph of the VERS $\R_\Phi$ (\cref{def:history:graph}).
We will denote it by $\AT_\Phi$, or simply $\AT$ when $\Phi$ is understood.

The VERS $\R_\ST$ associated to the IFS $\ST$ described in \cref{ex:IFS:Sierpiński:triangle} has colors
$$C = \{\textcolor{gray}{c_0}\} \cup C_\Crit \cup C_\PCrit = \{\textcolor{gray}{c_0}, \textcolor{cyan}{(1,2)_a}, \textcolor{Peach}{(1,3)_b}, \textcolor{OliveGreen}{(2,3)_c}, \textcolor{blue}{\PCritColor{l}{r}{1}{2}}, \textcolor{red}{\PCritColor{l}{t}{1}{3}}, \textcolor{Emerald}{\PCritColor{r}{t}{2}{3}}\}$$
and its replacement graphs are depicted in \cref{fig:Sierpiński:triangle:VERS}.
For instance, the replacement graph for $(1,2)_a$ is determined by the fact that $(a)\phi_1^{-1} = r \in K_2$ and $(a)\phi_2^{-1} = l \in K_1$, and the one for $\PCritColor{1}{2}{l}{r}$ by the fact that $(l)\phi_1^{-1} = l \in K_1$ and $(r)\phi_2^{-1} = r \in K_2$.
The replacement graph of $\textcolor{gray}{c_0}$ is depicted in the case in which the $\textcolor{gray}{c_0}$-colored edge is a loop, since this is always the case for the VERSs $\R_\Phi$ (for any IFS $\Phi$).

\begin{figure}
\centering
\begin{tikzpicture}[font=\small]
    \begin{scope}[xshift=-6.25cm,yshift=2.75cm]
    \node (it) at (0,0) {$\mathrm{x}$};
    \node (1it) at (-1.5,-1.5) {$1\mathrm{x}$};
    \node (2it) at (0,-1.5) {$2\mathrm{x}$};
    \node (3it) at (1.5,-1.5) {$3\mathrm{x}$};
    \draw[edge,gray] (it) to[out=-30,in=30,min distance=1cm] node[right]{$c_0$} (it);
    \draw (it) to (1it);
    \draw (it) to (2it);
    \draw (it) to (3it);
    \draw[edge,gray] (1it) to[out=210,in=150,min distance=1cm] node[left]{$c_0$} (1it);
    \draw[edge,gray] (2it) to[out=240,in=300,min distance=1cm] node[below]{$c_0$} (2it);
    \draw[edge,gray] (3it) to[out=-30,in=30,min distance=1cm] node[right]{$c_0$} (3it);
    \draw[edge,Cyan] (1it) to node[below]{$(1,2)_a$} (2it);
    \draw[edge,Peach] (1it) to[out=270,in=270,looseness=4/3] node[below]{$(1,3)_b$} (3it);
    \draw[edge,OliveGreen] (2it) to node[below]{$(2,3)_c$} (3it);
    \end{scope}
    \begin{scope}
    \node (i) at (-1.5,0) {$\mathrm{i}$};
    \node (1i) at (-2.5,-1.5) {$1\mathrm{i}$};
    \node (2i) at (-1.5,-1.5) {$2\mathrm{i}$};
    \node (3i) at (-.5,-1.5) {$3\mathrm{i}$};
    \node (t) at (1.5,0) {$\mathrm{t}$};
    \node (1t) at (.5,-1.5) {$1\mathrm{t}$};
    \node (2t) at (1.5,-1.5) {$2\mathrm{t}$};
    \node (3t) at (2.5,-1.5) {$3\mathrm{t}$};
    \draw[edge,Cyan] (i) to node[above]{$(1,2)_a$} (t);
    \draw (i) to (1i);
    \draw (i) to (2i);
    \draw (i) to (3i);
    \draw (t) to (1t);
    \draw (t) to (2t);
    \draw (t) to (3t);
    \draw[edge,blue] (1t) to[out=210,in=330] node[below]{$\PCritColor{1}{2}{l}{r}$} (2i);
    \end{scope}
    \begin{scope}[xshift=-6.25cm,yshift=-1.75cm]
    \node (i) at (-1.5,0) {$\mathrm{i}$};
    \node (1i) at (-2.5,-1.5) {$1\mathrm{i}$};
    \node (2i) at (-1.5,-1.5) {$2\mathrm{i}$};
    \node (3i) at (-.5,-1.5) {$3\mathrm{i}$};
    \node (t) at (1.5,0) {$\mathrm{t}$};
    \node (1t) at (.5,-1.5) {$1\mathrm{t}$};
    \node (2t) at (1.5,-1.5) {$2\mathrm{t}$};
    \node (3t) at (2.5,-1.5) {$3\mathrm{t}$};
    \draw[edge,Peach] (i) to node[above]{$(1,3)_b$} (t);
    \draw (i) to (1i);
    \draw (i) to (2i);
    \draw (i) to (3i);
    \draw (t) to (1t);
    \draw (t) to (2t);
    \draw (t) to (3t);
    \draw[edge,red] (1t) to node[below]{$\PCritColor{1}{3}{l}{t}$} (3i);
    \end{scope}
    \begin{scope}[xshift=0cm,yshift=-3.5cm]
    \node (i) at (-1.5,0) {$\mathrm{i}$};
    \node (1i) at (-2.5,-1.5) {$1\mathrm{i}$};
    \node (2i) at (-1.5,-1.5) {$2\mathrm{i}$};
    \node (3i) at (-.5,-1.5) {$3\mathrm{i}$};
    \node (t) at (1.5,0) {$\mathrm{t}$};
    \node (1t) at (.5,-1.5) {$1\mathrm{t}$};
    \node (2t) at (1.5,-1.5) {$2\mathrm{t}$};
    \node (3t) at (2.5,-1.5) {$3\mathrm{t}$};
    \draw[edge,OliveGreen] (i) to node[above]{$(2,3)_c$} (t);
    \draw (i) to (1i);
    \draw (i) to (2i);
    \draw (i) to (3i);
    \draw (t) to (1t);
    \draw (t) to (2t);
    \draw (t) to (3t);
    \draw[edge,Emerald] (2t) to[out=210,in=330] node[below]{$\PCritColor{2}{3}{r}{t}$} (3i);
    \end{scope}
    \begin{scope}[xshift=-6.25cm,yshift=-5.25cm]
    \node (i) at (-1.5,0) {$\mathrm{i}$};
    \node (1i) at (-2.5,-1.5) {$1\mathrm{i}$};
    \node (2i) at (-1.5,-1.5) {$2\mathrm{i}$};
    \node (3i) at (-.5,-1.5) {$3\mathrm{i}$};
    \node (t) at (1.5,0) {$\mathrm{t}$};
    \node (1t) at (.5,-1.5) {$1\mathrm{t}$};
    \node (2t) at (1.5,-1.5) {$2\mathrm{t}$};
    \node (3t) at (2.5,-1.5) {$3\mathrm{t}$};
    \draw[edge,blue] (i) to node[above]{$\PCritColor{1}{2}{l}{r}$} (t);
    \draw (i) to (1i);
    \draw (i) to (2i);
    \draw (i) to (3i);
    \draw (t) to (1t);
    \draw (t) to (2t);
    \draw (t) to (3t);
    \draw[edge,blue] (1i) to[out=330,in=210,looseness=2/3] node[below]{$\PCritColor{1}{2}{l}{r}$} (2t);
    \end{scope}
    \begin{scope}[xshift=0cm,yshift=-7cm]
    \node (i) at (-1.5,0) {$\mathrm{i}$};
    \node (1i) at (-2.5,-1.5) {$1\mathrm{i}$};
    \node (2i) at (-1.5,-1.5) {$2\mathrm{i}$};
    \node (3i) at (-.5,-1.5) {$3\mathrm{i}$};
    \node (t) at (1.5,0) {$\mathrm{t}$};
    \node (1t) at (.5,-1.5) {$1\mathrm{t}$};
    \node (2t) at (1.5,-1.5) {$2\mathrm{t}$};
    \node (3t) at (2.5,-1.5) {$3\mathrm{t}$};
    \draw[edge,red] (i) to node[above]{$\PCritColor{1}{3}{l}{t}$} (t);
    \draw (i) to (1i);
    \draw (i) to (2i);
    \draw (i) to (3i);
    \draw (t) to (1t);
    \draw (t) to (2t);
    \draw (t) to (3t);
    \draw[edge,red] (1i) to[out=330,in=210,looseness=2/3] node[below]{$\PCritColor{1}{3}{l}{t}$} (3t);
    \end{scope}
    \begin{scope}[xshift=-6.25cm,yshift=-8.75cm]
    \node (i) at (-1.5,0) {$\mathrm{i}$};
    \node (1i) at (-2.5,-1.5) {$1\mathrm{i}$};
    \node (2i) at (-1.5,-1.5) {$2\mathrm{i}$};
    \node (3i) at (-.5,-1.5) {$3\mathrm{i}$};
    \node (t) at (1.5,0) {$\mathrm{t}$};
    \node (1t) at (.5,-1.5) {$1\mathrm{t}$};
    \node (2t) at (1.5,-1.5) {$2\mathrm{t}$};
    \node (3t) at (2.5,-1.5) {$3\mathrm{t}$};
    \draw[edge,Emerald] (i) to node[above]{$\PCritColor{2}{3}{r}{t}$} (t);
    \draw (i) to (1i);
    \draw (i) to (2i);
    \draw (i) to (3i);
    \draw (t) to (1t);
    \draw (t) to (2t);
    \draw (t) to (3t);
    \draw[edge,Emerald] (2i) to[out=330,in=210,looseness=2/3] node[below]{$\PCritColor{2}{3}{r}{t}$} (3t);
    \end{scope}
\end{tikzpicture}
\caption{The replacement graphs of the VERS $\R_\ST$.}
\label{fig:Sierpiński:triangle:VERS}
\end{figure}
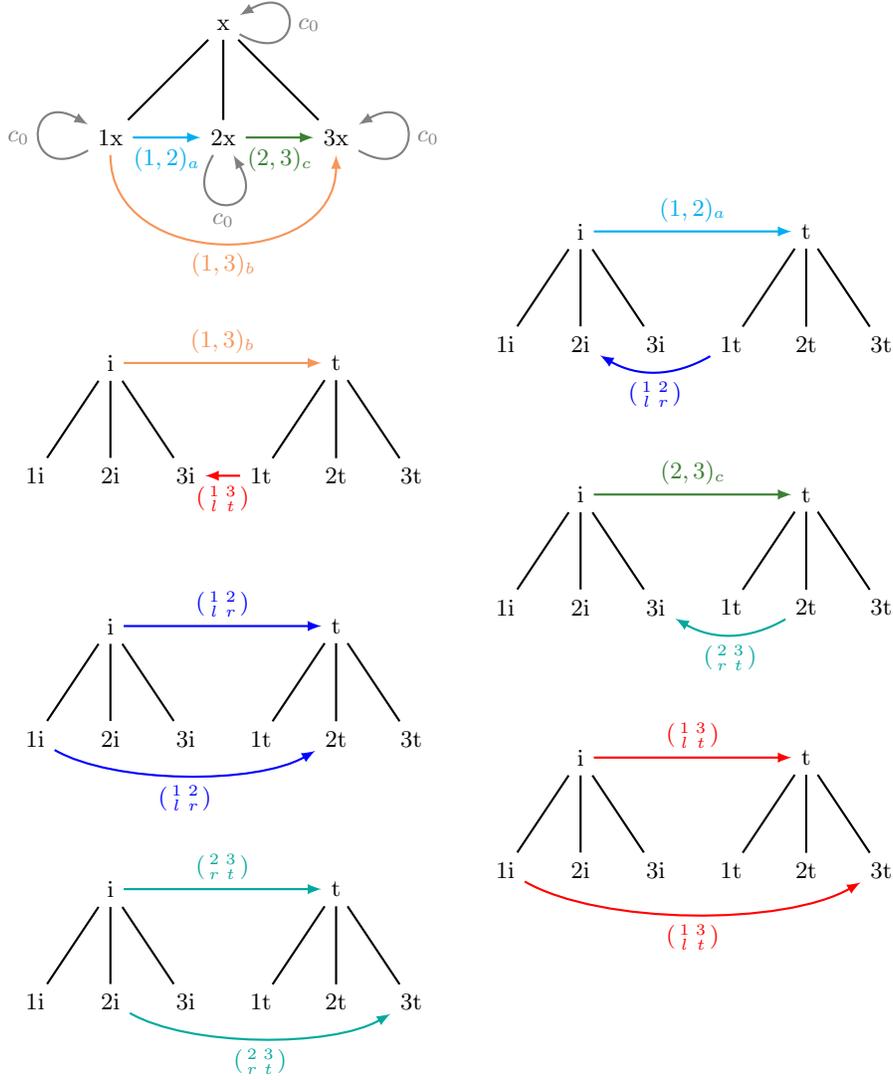

\begin{remark}
In the construction above, $\V$ is a full shift (the set of all infinite words over a finite alphabet).
It is likely that, replacing $\V$ with any edge shift, one could generalize the construction for \textit{graph directed constructions} from \cite{MauldinWilliams}.
\end{remark}

Note that this approach is not dissimilar from the recursive procedure developed in \cite{BDN} for bounded automata groups.
In particular, parallels can be drawn both between the tiles of a bounded automata group and the sets $K_w$ and between the post-critical symbols of a pcf self-similar IFS (see \cref{sub:IFS:background}) and the post-critical sequences of a bounded automata group.
For example, in the case of the Sierpiński triangle, the post-critical sequences for the Hanoi tower group (see \cite[Example 3.9.2]{NekSSG}) are the same as the post-critical symbols in \cref{ex:IFS:Sierpiński:triangle}, even if the corresponding VERSs are slightly different.

\begin{remark}
\label{rmk:IFS:colors}
Assume that $\Gamma_n$ has a $c$-colored edge $x_n \dots x_1 \to y_n \dots y_1$.
By the definition of the replacement graphs of $\R_\Phi$, an easy inductive argument shows that:
\begin{itemize}
    \item $c=c_0$ if and only if $x_i=y_i$ for all $i=1,\dots,n$;
    \item $c \in C_\Crit$ if and only if $x_i=y_i$ for all $i=1,\dots,n-1$, $x_n<y_n$ and $c=(x_n,y_n)_p$ for some $p$;
    \item $c \in C_\PCrit$ if and only $x_{n-1} \neq y_{n-1}$, $x_n < y_n$ and $c = \PCritColor{p}{q}{x_n}{y_n}$ for some $p, q$.
\end{itemize}
\end{remark}

\begin{proposition}
\label{prop:IFS:Crit:colors}
Let $n\in\mathbb{N}$, $w\in\Alphabet^n$ and $x<y$ in $\Alphabet$.
Then there exists $r \in K_{xw} \cap K_{yw}$ if and only if $\Gamma_{n+1}$ has an edge $xw \to yw$ colored by $(x,y)_p \in C_\Crit$ for some $p$, in which case $p=(r)\phi_w^{-1}$.
\end{proposition}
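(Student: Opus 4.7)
The plan is to classify all edges in $\Gamma_{n+1}$ of the form $xw \to yw$ with $x<y$ by tracing them through their spanning lifts in $\Gamma_n$ and then applying the definition of $R_{c_0}$.

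First, I would record two consequences of \cref{rmk:IFS:colors}: any loop of any $\Gamma_m$ is $c_0$-colored (since all of its coordinates coincide), and any edge whose endpoints differ only in the leftmost coordinate is $C_\Crit$-colored, hence of the form $(x,y)_p$ with $p \in K_x \cap K_y$. Then, by induction on $n$, every vertex of $\Gamma_n$ carries at least one $c_0$-loop: the base $n = 0$ is built into $\Gamma_0$, and for the inductive step, since $R_{c_0}$ has a $c_0$-loop at every one of its vertices and both $\mathrm{i}$ and $\mathrm{t}$ are replaced by $w$ when expanding a loop at $w$, every vertex $xw$ of $\Gamma_{n+1}$ inherits a $c_0$-loop from the expansion of the $c_0$-loop at $w$.

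Next I would pin down the spanning lift (\cref{def:spanning:lift}) of a horizontal edge $xw \to yw$ in $\Gamma_{n+1}$: it is the unique edge $w \leftrightarrow w$ in $\Gamma_n$, i.e., a loop at $w$, and by the first observation it is $c_0$-colored. Conversely, inspecting $R_{c_0}$ shows that the expansion of the $c_0$-loop at $w$ produces precisely the $(x',y')_{p'}$-colored edges $x'w \to y'w$ for all $x' < y'$ in $\Alphabet$ and all $p' \in K_{x'} \cap K_{y'}$. Combined with the first observation, this yields the intermediate equivalence that $\Gamma_{n+1}$ has a $(x,y)_p$-colored edge $xw \to yw$ (for some $p$) if and only if there exists $p \in K_x \cap K_y$.

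Finally, I would translate this to the statement about $K_{xw} \cap K_{yw}$ using the injectivity of $\Phi$: each $\phi_i$ is injective, hence so is $\phi_w$, yielding
\[
K_{xw} \cap K_{yw} \;=\; (K_x)\phi_w \cap (K_y)\phi_w \;=\; (K_x \cap K_y)\phi_w,
\]
where the second equality requires injectivity. The bijection $r \mapsto (r)\phi_w^{-1}$ between $K_{xw} \cap K_{yw}$ and $K_x \cap K_y$ then provides both the existence equivalence and the identification $p = (r)\phi_w^{-1}$. The most delicate step is this last use of injectivity: without it one would only have the inclusion $(K_x \cap K_y)\phi_w \subseteq K_{xw} \cap K_{yw}$, so the direction going from a point $r \in K_{xw} \cap K_{yw}$ to a critical symbol $(x,y)_p \in C_\Crit$ would fail.
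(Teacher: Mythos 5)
Your proposal is correct and follows essentially the same route as the paper's proof: the forward direction expands the $c_0$-colored loop at $w$ guaranteed by the construction of $\R_\Phi$, and the converse identifies the spanning lift of the edge $xw \to yw$ as that $c_0$-loop, forcing $p \in K_x \cap K_y$. Your explicit use of injectivity to get $K_{xw} \cap K_{yw} = (K_x \cap K_y)\phi_w$ (and the well-definedness of $p = (r)\phi_w^{-1}$) is the same point the paper uses implicitly under its standing injectivity assumption, just spelled out more carefully.
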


\begin{proof}
Let us first assume that $K_{xw} \cap K_{yw}$ includes a point $r$ and let $p=(r)\phi_w^{-1}$.
Then $p \in K_x \cap K_y$.
By the very definition of $\R_\Phi$, the graph $\Gamma_n$ (possibly $n=0$) features a $c_0$-colored loop at the vertex $w$.
Since $p \in K_x \cap K_y$, its expansion produces an $(x,y)_p$-colored edge $xw \to yw$, as needed.

Assume now that $\Gamma_{n+1}$ has a $(x,y)_p$-colored edge $xw \to yw$ and let $r=(p)\phi_w$.
By the definition of $\R_\Phi$, its spanning lift is necessarily a $c_0$-colored loop $w \to w$, so $p \in K_x \cap K_y$ and thus $r \in K_{xw} \cap K_{yw}$.
\end{proof}

\begin{proposition}
\label{prop:IFS:PCrit:colors}
Let $m,k\in\mathbb{N}_{>0}$ and consider $u = x_m \dots x_1 x_0 w$, $v = y_m \dots y_1 y_0 w$ elements of $\Alphabet^*$ for some $w \in \Alphabet^k$ and $x_i, y_i \in \Alphabet$ ($i=0\dots,m$) with $x_m < y_m$ and $x_0 \neq y_0$.
Then there exists $r \in K_u \cap K_v$ if and only if $\Gamma_{k+m+1}$ has an edge $u \to v$ that is colored by $\PCritColor{p}{q}{x_m}{y_m} \in C_\PCrit$ for some $p$ and $q$, in which case $p=(r)\phi_{x_{m-1} \dots x_1 x_0 w}^{-1}$ and $q=(r)\phi_{y_{m-1} \dots y_1 y_0 w}^{-1}$.
\end{proposition}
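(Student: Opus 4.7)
The plan is to prove the proposition by induction on $m$, using Proposition \ref{prop:IFS:Crit:colors} as the base input and leveraging the uniqueness of spanning lifts in the augmented tree $\AT_\Phi$ for the induction step.

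For the base case $m=1$, any candidate PCrit edge $u \to v$ in $\Gamma_{k+2}$ has a unique spanning lift in $\Gamma_{k+1}$ between the predecessors $x_0 w$ and $y_0 w$. Because $x_0 \neq y_0$, this lift is not a $c_0$-loop, and since no PCrit edges exist yet at level $k+1$, the lift must be a Crit edge. By Proposition \ref{prop:IFS:Crit:colors}, such a Crit edge exists if and only if there is a point $r_0 \in K_{x_0 w} \cap K_{y_0 w}$, in which case its color is $(\min\{x_0, y_0\}, \max\{x_0, y_0\})_{(r_0)\phi_w^{-1}}$. A direct inspection of the Crit replacement rule then shows that expanding this Crit edge produces the PCrit edge $u \to v$ precisely when the top-letter inequality $x_1 < y_1$ is compatible with the preimage conditions $(r_0)\phi_{x_0 w}^{-1} \in K_{x_1}$ and $(r_0)\phi_{y_0 w}^{-1} \in K_{y_1}$. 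Setting $r := r_0$ gives $r \in K_u \cap K_v$, and the claimed formulas for $p$ and $q$ follow by direct computation; their post-criticality comes from the fact that their addresses lie in $\sigma(\mathcal{C}) \subseteq \mathcal{PC}$.

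For the induction step $m \geq 2$, the spanning lift of a PCrit edge $u \to v$ in $\Gamma_{k+m+1}$ sits in $\Gamma_{k+m}$ between $u' = x_{m-1}\dots x_0 w$ and $v' = y_{m-1}\dots y_0 w$. The hypothesis $x_0 \neq y_0$ ensures these predecessors are distinct (ruling out a $c_0$-loop) and rules out a Crit lift, because a Crit edge between words of length $k+m$ requires all non-top letters of the endpoints to coincide, contradicting $x_0 \neq y_0$. So the lift is a PCrit edge; checking the PCrit expansion rule shows that only the orientation $u' \to v'$ can expand to our $u \to v$, which forces $x_{m-1} < y_{m-1}$. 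The induction hypothesis then supplies the equivalence between the lift's existence and that of a point $r' \in K_{u'} \cap K_{v'}$ with color data $p' = (r')\phi_{x_{m-2}\dots x_0 w}^{-1}$ and $q' = (r')\phi_{y_{m-2}\dots y_0 w}^{-1}$. Applying the PCrit expansion rule for $R_{\PCritColor{p'}{q'}{x_{m-1}}{y_{m-1}}}$ identifies the edge $u \to v$ as the unique new edge satisfying $x_m < y_m$ together with the preimage conditions $(p')\phi_{x_{m-1}}^{-1} \in K_{x_m}$ and $(q')\phi_{y_{m-1}}^{-1} \in K_{y_m}$; these translate to $r := r' \in K_u \cap K_v$, and composing inverses yields the claimed formulas for $p$ and $q$.

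The main obstacle will be the careful bookkeeping of conventions across induction steps: tracking which preimages of $r$ land in which cells, confirming post-criticality via shifts of critical addresses, and verifying the forward direction under the hypothesis that only constrains the extremal letters $x_m, y_m, x_0, y_0$. In particular, one must argue that the intermediate letters $x_1, \dots, x_{m-1}$ and $y_1, \dots, y_{m-1}$ are automatically forced into the relations required by successive expansion rules; otherwise, the preimages of $r$ at intermediate levels would fail to land in the appropriate cells, precluding the existence of $r$ itself and keeping both sides of the iff simultaneously false.
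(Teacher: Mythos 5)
Your proposal follows essentially the same route as the paper's proof: induction on $m$, with \cref{prop:IFS:Crit:colors} feeding the base case, the spanning lift (\cref{def:spanning:lift}) and the color classification of \cref{rmk:IFS:colors} handling the backward implication, and the expansion rules transporting the intersection point $r$ and its preimages through one level. The ``main obstacle'' you defer at the end (the intermediate letters) is in fact absorbed by the induction hypothesis itself, exactly as in the paper, so no separate argument is needed there.

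Two local justifications do need repair. First, in the base case your reason for the lift being Crit-colored --- ``no PCrit edges exist yet at level $k+1$'' --- is false in general: PCrit-colored edges appear from level $2$ onward, and $k+1\geq 2$. The correct reason, which is what the paper extracts from \cref{rmk:IFS:colors}, is that this particular lift joins $x_0w$ and $y_0w$, two words agreeing everywhere below their distinct top letters, so its color can be neither $c_0$ nor post-critical. Second, in the induction step you claim that the existence of the edge $u\to v$ forces $x_{m-1}<y_{m-1}$, so that the lift must be oriented $u'\to v'$. The hypotheses only order $x_m,y_m$ (and distinguish $x_0,y_0$): when $y_{m-1}<x_{m-1}$ the lift is the edge $v'\to u'$ colored $\PCritColor{q'}{p'}{y_{m-1}}{x_{m-1}}$, and its expansion still yields the edge $u\to v$ colored $\PCritColor{p}{q}{x_m}{y_m}$, since the direction and color of the new edge are normalized by $x_m<y_m$ rather than inherited from the parent's orientation; this symmetric reading of the replacement rule is what the paper's forward implication relies on (``colored by $\PCritColor{\hat p}{\hat q}{x_{m-1}}{y_{m-1}}$ or $\PCritColor{\hat q}{\hat p}{y_{m-1}}{x_{m-1}}$, depending on whether $x_{m-1}<y_{m-1}$ or $y_{m-1}<x_{m-1}$''). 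Indeed, if your orientation claim were enforced, the forward implication would fail whenever $y_{m-1}<x_{m-1}$ and $x_m<y_m$, so the correct fix is the paper's two-case treatment: allow either orientation of the lift and apply the induction hypothesis with the roles of $u'$ and $v'$ exchanged when necessary. With these two adjustments, and carrying out the preimage computations you only sketch, your argument coincides with the paper's.
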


\begin{proof}
Let us fix $k>0$ and proceed by induction on $m>0$.
We first show both implications when $m=1$, for which $u =  x_1 x_0 w$ and $v = y_1 y_0 w$ with $x_1 < y_1$.

Assume that there exists $r \in K_u \cap K_v$.
Then $r \in K_{x_0w} \cap K_{y_0w}$.
Now, by \cref{prop:IFS:Crit:colors}, $\Gamma_{m+1}$ has an edge $x_0w \leftrightarrow y_0w$ colored by $(x_0,y_0)_{(r)\phi_w^{-1}}$ or $(y_0,x_0)_{(r)\phi_w^{-1}}$ (depending on whether $x_0<y_0$ or $y_0<x_0$).
By the definition of $\R_\Phi$, the expansion of such edge produces an edge $x_1 x_0 w \to y_1 y_0 w$ of $\Gamma_{n+2}$ that is colored by $\PCritColor{p}{q}{x_1}{y_1}$ for $p=(r)\phi_{w}^{-1}\phi_{x_0}^{-1}=(r)\phi_{x_0w}^{-1}$ and $q=(r)\phi_{w}^{-1}\phi_{y_0}^{-1}=(r)\phi_{y_0w}^{-1}$.

Conversely, assume that $\Gamma_{n+2}$ has a $\PCritColor{p}{q}{x_1}{y_1}$-colored edge $x_1 x_0 m \to y_1 y_0 w$, for some $p$ and $q$.
By \cref{rmk:IFS:colors}, its spanning lift is an edge $x_0 w \leftrightarrow y_0 w$ colored by $(x_0,y_0)_s$ for some $s$.
By definition of $\R_\Phi$ we have that $p=(s)\phi_{x_0}^{-1}\in K_{x_1}$ and $q=(s)\phi_{y_0}^{-1}\in K_{y_1}$, so $s=(r)\phi_w^{-1} \in K_{x_1 x_0} \cap K_{y_1 y_0}$.
Let $r = (s)\phi_w$.
Then $r \in K_{x_1 x_0 w} \cap K_{y_1 y_0 w}$ and $p = (r)\phi_{x_0w}^{-1}$ and $q = (r)\phi_{y_0w}^{-1}$, as needed.

Suppose now that $m\geq2$ and assume the statement for the prefixes $\hat{u} = x_{m-1} \dots x_0 w$ and $\hat{v} = y_{m-1} \dots y_0 w$.
Let us show both implications of the statement for $u=x_m\hat{u}$ and $v=y_m\hat{v}$, one at a time.

Assume that there exists $r \in K_u \cap K_v$.
Then $r \in K_{\hat{u}} \cap K_{\hat{v}}$.
By the inductive hypothesis, $\Gamma_{k+m}$ has an edge $\hat{u} \leftrightarrow \hat{v}$ colored by $\PCritColor{\hat{p}}{\hat{q}}{x_{m-1}}{y_{m-1}}$ or $\PCritColor{\hat{q}}{\hat{p}}{y_{m-1}}{x_{m-1}}$ (depending on whether $x_{m-1} < y_{m-1}$ or $y_{m-1}<x_{m-1}$), where $\hat{p} = (r)\phi^{-1}_{x_{m-2} \dots x_0 w}$ and $\hat{q} = (r)\phi^{-1}_{y_{m-2} \dots y_0 w}$.
Let $p=(r)\phi_{x_{m-1} \dots x_1 x_0 w}^{-1}$ and $q=(r)\phi_{y_{m-1} \dots y_1 y_0 w}^{-1}$ and note that $p = (\hat{p})\phi_{x_{m-1}}^{-1} \in K_{x_m}$ and $q = (\hat{q})\phi_{y_{m-1}}^{-1} \in K_{y_m}$.
Then, by the definition of $\R_\Phi$, the expansion of the aforementioned edge $\hat{u} \leftrightarrow \hat{v}$ produces an edge $x_m \hat{u} \to y_m \hat{v}$ (i.e., $u \to v$) colored by $\PCritColor{p}{q}{x_m}{y_m}$, since $x_m < y_m$.

Conversely, assume that $\Gamma_{k+m+1}$ has a $\PCritColor{p}{q}{x_m}{y_m}$-colored edge $u \to v$ for some $p$ and $q$.
By \cref{rmk:IFS:colors}, since $m\geq2$ and $x_{m-2} \neq y_{m-2}$, its spanning lift is an edge $x_{m-1} \dots x_0 w \leftrightarrow y_{m-1} \dots y_0 w$ colored by $\PCritColor{\hat{p}}{\hat{q}}{x_{m-1}}{y_{m-1}}$ for some $\hat{p}$ and $\hat{q}$.
By the induction hypothesis, there exists $r \in K_{\hat{u}} \cap K_{\hat{v}}$, which is such that $\hat{p} = (r)\phi_{x_{m-2} \dots x_0 w}^{-1}$ and $\hat{q} = (r)\phi_{y_{m-2} \dots y_0 w}^{-1}$.
By definition of $\R_\Phi$ we have that $p = (\hat{p})\phi_{x_{m-1}}^{-1} \in K_{x_m}$ and that $q = (\hat{q})\phi_{y_{m-1}}^{-1} \in K_{y_m}$.
Putting the last two sentences together, we obtain that
$$p = \left((r)\phi_{x_{m-2} \dots x_0 w}^{-1}\right)\phi_{x_{m-1}}^{-1} = (r)\phi_{x_{m-1} \dots x_0 w} \in K_{x_m},$$
$$q = \left((r)\phi_{y_{m-2} \dots y_0 w}^{-1}\right)\phi_{y_{m-1}}^{-1} = (r)\phi_{y_{m-1} \dots y_0 w} \in K_{y_m},$$
so $r \in K_u \cap K_v$, all as needed.
\end{proof}

\begin{remark}
\label{rmk:IFS:Crit:PCrit:colors}
By \cref{prop:IFS:Crit:colors,,prop:IFS:PCrit:colors}, each edge $u \leftrightarrow v$ whose color lies in $C_\Crit \cup C_\PCrit$ identifies a point that lies in $K_u \cap K_v$, denoted by $r$ in both propositions.
In the proof of \cref{prop:IFS:PCrit:colors} we actually showed that $r$ remains the same when expanding.
More precisely, let $e_1$ and $e_2$ be edges whose colors lie in $C_\Crit \cup C_\PCrit$ and let $r_1$ and $r_2$ be the points associated (in the sense described just above) to the edges $e_1$ and $e_2$, respectively;
if $e_1$ is the spanning lift of $e_2$, then $r_1 = r_2$.
\end{remark}

\cref{prop:IFS:Crit:colors,,prop:IFS:PCrit:colors} immediately show the following crucial fact.

\begin{theorem}
\label{thm:IFS:history:graph}
For all $n \in \mathbb{N}$, the graph $\Gamma_n$ has an edge $u \leftrightarrow v$ if and only if $K_u \cap K_v$ is non-empty.
\end{theorem}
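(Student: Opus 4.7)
The plan is to deduce the theorem from Propositions~\ref{prop:IFS:Crit:colors} and~\ref{prop:IFS:PCrit:colors} by case analysis on $u,v\in\Alphabet^n$.

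First, if $u=v$, a quick induction on $n$ shows that $\Gamma_n$ carries a $c_0$-colored loop at every vertex: the base graph $\Gamma_0$ has one at $\varepsilon$, and the replacement $R_{c_0}$ adds such loops at every vertex of the form $x\mathrm{i}$ produced by expansion. Coupled with $K_u\cap K_v = K_u\neq\emptyset$, this settles the case.

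If $u\neq v$, I would take $w$ to be the maximal common suffix of $u$ and $v$ and write $u=\alpha w$, $v=\beta w$ with $|\alpha|=|\beta|=m+1$ and $a_0\neq b_0$ (where $\alpha=a_m\dots a_0$, $\beta=b_m\dots b_0$). Set $k=|w|$. In the base case $m=0$, after swapping $u$ and $v$ if necessary to arrange $a_0<b_0$, Proposition~\ref{prop:IFS:Crit:colors} (applied with the value $k$ in place of its $n$) yields the biconditional for $C_\Crit$-colored edges; by Remark~\ref{rmk:IFS:colors}, any horizontal edge between $u$ and $v$ necessarily has a color in $C_\Crit$ (leftmost letters differ, rest agree), so this captures all possible edges. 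When $m\geq 1$ and $a_m\neq b_m$, again up to swapping, Proposition~\ref{prop:IFS:PCrit:colors} gives the analogous biconditional for $C_\PCrit$-colored edges, and Remark~\ref{rmk:IFS:colors} confirms these are the only colors allowed when the two leftmost letters of $u$ and $v$ differ.

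The subtle case is $m\geq 1$ with $a_m=b_m$. By Remark~\ref{rmk:IFS:colors}, no horizontal edge between $u$ and $v$ can exist in $\Gamma_n$, since every nontrivial edge color forces the leftmost letters to differ, so the task reduces to showing $K_u\cap K_v=\emptyset$. I would attack this by induction on $m$: factoring $K_u=(K_{a_m})\phi_{u''}$ and $K_v=(K_{b_m})\phi_{v''}$ through the common cell $K_{a_m}=K_{b_m}$ and exploiting the injectivity of each $\phi_i$ should allow one to reduce to a strictly shorter instance of the theorem already handled. Making this reduction airtight under the injective pcf hypothesis, so that no hypothetical shared intersection point slips through the $z<v$ constraint of the replacement rules, is what I expect to be the main technical obstacle.
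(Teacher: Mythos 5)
Your reduction to \cref{prop:IFS:Crit:colors,prop:IFS:PCrit:colors} in the cases $u=v$ and ``the leftmost letters of $u$ and $v$ differ'' matches what the paper does --- the paper in fact offers no further argument, asserting that the two propositions give the theorem immediately --- and your observation via \cref{rmk:IFS:colors} that those propositions only account for pairs whose leftmost letters differ is a sharper reading than the paper's one-line derivation. (The minor point that \cref{prop:IFS:PCrit:colors} formally assumes $k>0$ while your maximal common suffix may be empty is harmless.)

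The gap you flag in the remaining case is, however, genuine and cannot be closed in the way you hope. The claim you need --- that $K_u\cap K_v=\emptyset$ whenever $u\neq v$ share the same leftmost letter --- does not follow from injectivity and post-critical finiteness; it is false in general. Take $X=[0,1]$ with $\phi_1(t)=t/2$ and $\phi_2(t)=1-t/2$: this IFS is injective and pcf (here $\PCrit=\{0,1\}$), yet $K_{21}=[1/4,1/2]$ meets $K_{22}=[1/2,3/4]$ at $1/2$ even though both words begin with $2$. On the VERS side this intersection is indeed not recorded: $(1/2)\phi_1^{-1}=(1/2)\phi_2^{-1}=1\in K_2$, so the replacement graph $R_{(1,2)_{1/2}}$, whose edges require $z<v$, is empty, and $\Gamma_2$ has no edge $21\leftrightarrow 22$, in accordance with \cref{rmk:IFS:colors}. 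Hence in this case the implication ``$K_u\cap K_v\neq\emptyset$ implies there is an edge'' of \cref{thm:IFS:history:graph} fails as literally stated, so your intended induction would be aimed at a false statement --- which is exactly why the reduction resists being made airtight. Completing this case requires either an additional hypothesis on $\Phi$ (that distinct intersecting cells of the same level always have distinct leading letters, as happens in the standard orientation-preserving examples such as the Sierpiński gasket) or an amendment of the construction so that $C_\PCrit$ and the replacement rules also allow $z=v$; with either fix in place, your case analysis would complete the proof.
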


Note that, as a consequence, for a self-similar pcf IFS whose maps have the same similarity ratio, the history graph $\AT_\Phi$ is the same as the augmented tree considered in \cite{KLLW}.

\subsection{Powers of IFSs and their history graphs}
\label{sub:power}

Let us introduce a few tools that will be useful to show that the history graph is hyperbolic in the next subsection.

Let $\Phi$ be an IFS and let $k \in \mathbb{N}_{\geq1}$.
The \textbf{$k$-th power} of $\Phi$ is the IFS $\Phi^k = \left\{ \phi_w \mid w \in \Alphabet^k \right\}$.
This is just a recoding of the IFS into an alphabet of words of the given length $k$.
It is clear that the attractors of $\Phi$ and $\Phi^k$ are the same.

A notion of power can also be defined for augmented trees as follows.

\begin{definition}
\label{def:augmented:tree:power}
Let $\AT$ be an augmented tree with root $v_0$ and let $k \in \mathbb{N}_{\geq0}$.
The \textbf{$k$-th power} of $\AT$ is the graph $\AT^k$ defined as follows:
\begin{itemize}
    \item $V(\AT^k) = \{ v \in \AT \mid \dist_{\AT}(v_0,v) \equiv 0\ \mathrm{mod}\ k \}$;
    \item the vertical edges of $\AT^k$ are $u \to v$ whenever $\dist_{\AT}(u,v_0) < \dist_{\AT}(v,v_0)$ and  $\dist_{\AT}(u,v)=k$;
    \item the horizontal edges $\AT^k$ are the same as those of $\AT$, whenever they join vertices that belong to $\AT^k$ too.
\end{itemize}
\end{definition}

\begin{remark}
\label{rmk:IFS:power:history:graph}
Let $\Phi$ be an injective pcf IFS and let $k \in \mathbb{N}_{\geq0}$.
Then it is clear that the history graph of $\Phi^k$ is the same as the power of the history graph of $\Phi$.
\end{remark}

The following fact is a generalization of point 2 of Lemma 4.1 of \cite{NekSSG}.

\begin{lemma}
\label{lem:IFS:power:quasi:isometry}
An augmented tree $\mathcal{AT}$ is quasi-isometric to $\mathcal{AT}^k$ for any $k \in \mathbb{N}$.
\end{lemma}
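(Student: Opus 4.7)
The plan is to verify that the natural inclusion $\iota \colon \mathcal{AT}^k \hookrightarrow \mathcal{AT}$ is a quasi-isometric embedding with cobounded image. By definition of $\mathcal{AT}^k$, the horizontal edges of $\mathcal{AT}^k$ are literally horizontal edges of $\mathcal{AT}$, while each vertical edge of $\mathcal{AT}^k$ joins vertices whose $\mathcal{AT}$-distance along the tree is exactly $k$. Hence concatenating realizations of edges gives the easy upper bound
$$d_\mathcal{AT}(u,v) \leq k \cdot d_{\mathcal{AT}^k}(u,v) \quad \text{for all } u,v \in V(\mathcal{AT}^k).$$
For the cobounded image, every vertex $w \in V(\mathcal{AT})$ admits a unique ancestor at depth $k\lfloor \mathrm{depth}(w)/k\rfloor$, which lies in $V(\mathcal{AT}^k)$ and is at $\mathcal{AT}$-distance strictly less than $k$ from $w$.

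For the lower bound, I would define a retraction $f \colon V(\mathcal{AT}) \to V(\mathcal{AT}^k)$ sending $w$ to its unique ancestor at depth $k\lfloor\mathrm{depth}(w)/k\rfloor$; by construction $f$ restricts to the identity on $V(\mathcal{AT}^k)$. The key claim is that $f$ sends adjacent vertices of $\mathcal{AT}$ to vertices at $\mathcal{AT}^k$-distance at most $1$. Chaining this estimate along any $\mathcal{AT}$-geodesic between $u,v \in V(\mathcal{AT}^k)$ then yields
$$d_{\mathcal{AT}^k}(u,v) = d_{\mathcal{AT}^k}\bigl(f(u),f(v)\bigr) \leq d_\mathcal{AT}(u,v),$$
which combined with the previous inequality proves the quasi-isometric embedding.

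The claim for $f$ splits into two cases. If $u \to v$ is a vertical edge of $\mathcal{AT}$, then $\mathrm{depth}(v) = \mathrm{depth}(u) + 1$, so either $f(u) = f(v)$ (when crossing the edge does not change $\lfloor \cdot/k\rfloor$) or $f(v)$ is the immediate $\mathcal{AT}^k$-successor of $f(u)$ (when the new depth is divisible by $k$); in both cases $d_{\mathcal{AT}^k}(f(u),f(v)) \leq 1$. If instead $u \leftrightarrow v$ is a horizontal edge at depth $n$, then the main tool is the lift property of augmented trees: by iteratively applying \cref{rmk:lift:distance} exactly $n - k\lfloor n/k\rfloor$ times, the horizontal edge lifts to a horizontal walk of length at most $1$ between $\mathrm{Pred}^{n-k\lfloor n/k\rfloor}(u) = f(u)$ and $\mathrm{Pred}^{n-k\lfloor n/k\rfloor}(v) = f(v)$. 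Since the horizontal edges of $\mathcal{AT}$ joining vertices of $\mathcal{AT}^k$ are preserved in $\mathcal{AT}^k$, we obtain $d_{\mathcal{AT}^k}(f(u),f(v)) \leq 1$.

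The only subtlety is verifying that the horizontal edge produced by the lift at level $k\lfloor n/k\rfloor$ genuinely lives in $\mathcal{AT}^k$, which is immediate from \cref{def:augmented:tree:power}, and that $f$ is well defined, which follows from uniqueness of ancestors in the tree. No hyperbolicity of $\mathcal{AT}$ is needed; the statement holds for an arbitrary augmented tree.
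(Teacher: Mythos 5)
Your proof is correct, and for the harder inequality it takes a genuinely different route from the paper. The easy bound $d_{\mathcal{AT}}(u,v)\leq k\cdot d_{\mathcal{AT}^k}(u,v)$ and the coboundedness via nearest ancestors are exactly the paper's argument. For the reverse direction, the paper performs surgery on an $\mathcal{AT}$-geodesic: it keeps the vertical edges and replaces each horizontal edge $u\leftrightarrow v$ by a detour (climb to the nearest ancestors $u',v'$ lying in $\mathcal{AT}^k$, cross the horizontal edge $u'\leftrightarrow v'$ guaranteed by the augmented-tree property, descend back), which yields the weaker estimate $d_{\mathcal{AT}^k}(p,q)\leq(2k+1)\,d_{\mathcal{AT}}(p,q)$. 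You instead build the nearest-ancestor retraction $f\colon V(\mathcal{AT})\to V(\mathcal{AT}^k)$ and check edge by edge that it is $1$-Lipschitz into $\mathcal{AT}^k$: vertical edges either collapse under $f$ or cross a level divisible by $k$, in which case $f(u)=\Pred^k(f(v))$ and the depth gap forces $\dist_{\mathcal{AT}}(f(u),f(v))=k$, so this is a vertical edge of $\mathcal{AT}^k$ in the sense of \cref{def:augmented:tree:power}; horizontal edges lift, via iterated use of \cref{rmk:lift:distance}, to a horizontal edge or a coincidence at the level of $f$, and such a horizontal edge between vertices of $\mathcal{AT}^k$ is by definition an edge of $\mathcal{AT}^k$. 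Chaining along a geodesic then gives the sharper bound $d_{\mathcal{AT}^k}(u,v)\leq d_{\mathcal{AT}}(u,v)$ with multiplicative constant $1$ rather than $2k+1$. Both arguments ultimately rest on the same lift property of augmented trees, and, as you note, neither needs hyperbolicity; your projection argument is arguably cleaner, since it avoids having to argue that the surgered path ``corresponds to'' a path in $\mathcal{AT}^k$, which is the one slightly delicate point in the paper's version.
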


\begin{proof}
Consider the map that associates to each vertex of $\mathcal{AT}^{k}$ the corresponding vertex in $\mathcal{AT}$.
Quasi-density follows by considering, for any vertex of $\mathcal{AT}$, the first of its predecessors belonging to $\mathcal{AT}^k$:
their distance is smaller than $k$.

Now let us prove that $\dist_{\mathcal{AT}}(p,q) \leq k\cdot\dist_{\mathcal{AT}^k}(p,q)$.
Consider a geodesic joining $p$ and $q$ in $\mathcal{AT}^k$:
it corresponds to a path $\gamma$ in $\mathcal{AT}$
where the vertical edges come in multiples of $k$, each multiple contributing $1$ to the distance in $\AT^k$.
From this observation, we obtain the desired inequality as follows:
$$\dist_{\mathcal{AT}}(p,q) \leq \text{ length of } \gamma \leq k\cdot\dist_{\mathcal{AT}^k}(p,q).$$

For the other inequality, let $\gamma$ be a geodesic in $\AT$ and let us modify it as follows.
We keep the vertical edges unchanged and we replace each horizontal edge $e$ with a path $e'$ comprised of three pieces described right below.
Say that $e$ is $u \leftrightarrow v$ and let $u'$ and $v'$ be the respective closest predecessors that belong to $\AT^k$.
Then the path $e'$ has a vertical part joining $u$ to $u'$, a horizontal edge $u' \leftrightarrow v'$ (which exists because $\AT$ is an augmented tree) and a vertical part joining $v'$ with $v$.
Let $\gamma'$ denote the result of this modification.
Note that $\gamma'$ corresponds to a path in $\AT^k$, so $\dist_{\mathcal{AT}^k}(p,q) \leq \text{length}(\gamma')$.
Moreover, by construction the length of $\gamma'$ is at most $(2k+1) \cdot\dist_{\mathcal{AT}}(p,q)$.
Hence we ultimately have that $\dist_{\mathcal{AT}^k}(p,q) \leq (2k+1) \cdot \dist_{\mathcal{AT}}(p,q)$, which concludes the proof.
\end{proof}

\begin{remark}
\label{rmk:IFS:power:contracting:ratios}
The composition of two contractions $f_1$ and $f_2$ is a contraction whose ratio is at most the product of the ratios of $f_1$ anb $f_2$.
Thus, if $k \geq 2$ then the contracting ratio of $\Phi^k$ is strictly smaller than the one of $\Phi$.

\end{remark}

\begin{lemma}
\label{lem:IFS:power:postcrit}
For every IFS $\Phi$ and all $k\in\mathbb{N}_{\geq0}$, we have that $\PCrit_{\Phi^k} \subseteq \PCrit_\Phi$.
\end{lemma}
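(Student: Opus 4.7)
The plan is to chase addresses under the natural identification $(\Alphabet^k)^{-\omega} = \Alphabet^{-\omega}$, where the shift map of $\Phi^k$ becomes $\sigma^k$ (with $\sigma$ denoting the $\Phi$-shift) and the address map $\chi$ for $\Phi^k$ coincides with the one for $\Phi$. The cases $k \leq 1$ are immediate (with $\Phi^1 = \Phi$), so I would assume $k \geq 2$.

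First I would fix $p \in \PCrit_{\Phi^k}$ and write $p = \chi(\beta)$ with $\beta = (\gamma)\sigma^{nk}$ for some $n \geq 1$ and $\gamma \in \mathcal{C}_{\Phi^k}$. Setting $q := \chi(\gamma) \in \Crit_{\Phi^k}$, the point $q$ lies in $K_U \cap K_V$ for some distinct $U, V \in \Alphabet^k$, so $q$ admits an address ending in $U$ and another ending in $V$. Since $\gamma$ is itself one address of $q$ and must end in some length-$k$ word, I can select a second address $\alpha$ of $q$ that ends in a length-$k$ word distinct from the one in which $\gamma$ ends. In particular, $\gamma$ and $\alpha$ first differ at some position $j$ with $1 \leq j \leq k$.

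The next step is to shift both $\gamma$ and $\alpha$ in order to extract a $\Phi$-critical symbol. Setting $w = \gamma_{j-1} \cdots \gamma_1 = \alpha_{j-1} \cdots \alpha_1$ and applying the identity $\chi(\delta) = (\chi((\delta)\sigma^{j-1}))\phi_w$ (valid for any left-infinite address $\delta$) to both $\delta = \gamma$ and $\delta = \alpha$, the injectivity of $\phi_w$ forces $\chi((\gamma)\sigma^{j-1}) = \chi((\alpha)\sigma^{j-1}) =: q'$. Since $(\gamma)\sigma^{j-1}$ and $(\alpha)\sigma^{j-1}$ end in $\gamma_j \neq \alpha_j$, we have $q' \in K_{\gamma_j} \cap K_{\alpha_j} \subseteq \Crit_\Phi$, hence $(\gamma)\sigma^{j-1} \in \mathcal{C}_\Phi$.

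To close the argument I would rewrite $\beta = (\gamma)\sigma^{nk} = ((\gamma)\sigma^{j-1})\sigma^{nk-j+1}$; since $j \leq k \leq nk$, the exponent $nk - j + 1$ is at least $1$, so $\beta \in (\mathcal{C}_\Phi)\sigma^{nk-j+1} \subseteq \mathcal{PC}_\Phi$ and therefore $p = \chi(\beta) \in \PCrit_\Phi$. The subtle point that makes the whole argument work is the bound $j \leq k$: this is exactly what criticality of $q$ for $\Phi^k$ (as opposed to some larger power) buys us, ensuring that two addresses of $q$ already diverge within their last $k$ letters and leaving enough shift budget to extract a $\Phi$-critical ancestor of $p$. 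Everything else is bookkeeping with the shift operator.
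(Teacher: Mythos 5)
Your argument is correct, but it takes a route different from the paper's. The paper isolates an auxiliary claim stated at the level of cells: if $q=(p)\phi_w\in K_u\cap K_v$ with $u\neq v$ and $|u|=|v|\leq|w|$, then $p\in\PCrit_\Phi$; this is proved by induction on $|u|$, using that a point not in $\Crit_\Phi$ lies in a unique $K_x$, which forces $u$, $v$ and $w$ to end in the same letter, so common last letters are peeled off until the point becomes $\Phi$-critical, and the lemma follows by applying the claim with $u,v\in\Alphabet^k$. You instead argue purely symbolically: you choose a second address of the $\Phi^k$-critical point $q$ that diverges from $\gamma$ within the last $k$ letters (which uses that every point of $K_U$ admits an address ending in $U$, a consequence of surjectivity of $\chi$), shift both addresses past the common suffix to the first disagreement, use injectivity of $\phi_w$ to see that the shifted addresses describe the same point, which is then $\Phi$-critical, and finally check the shift budget $nk-j+1\geq(n-1)k+1\geq1$. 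Both proofs hinge on the same quantitative fact — the divergence occurs within the last $k$ letters, leaving at least one $\Phi$-shift to spare — but yours replaces the induction and the auxiliary claim by a direct bookkeeping with addresses and the shift map, at the price of making the identification of $\chi$ and $\sigma^k$ for $\Phi^k$ with their $\Phi$-counterparts explicit (which the paper also uses implicitly). Note that, exactly as in the paper's proof, your argument uses injectivity of the maps (through $\phi_w^{-1}$), which is the standing assumption of the section even though the statement nominally concerns every IFS.
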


\begin{proof}
We will make use of the following more general claim.

\begin{claim}
Let $u,v,w \in \Alphabet^*$ with $|u|=|v|\leq|w|$.
Let $p \in K$ and assume that $q = (p)\phi_w \in K_u \cap K_v$.
Then $(q)\phi_w^{-1} \in \PCrit_\Phi$.
\end{claim}

Let us first see that this claim implies the Lemma.
If $p \in \PCrit_{\Phi^k}$, then $p=(q)\phi_w^{-1}$ for some $w \in {}^{*}_{}{\left(\Alphabet^k\right)}$ and $q \in \Crit_{\Phi_k}$, i.e., $q \in K_u \cap K_v$ for some $u,v \in \Alphabet^k$.
In particular, $|u|=|v|\leq|w|$ and $p$ and $q$ are as in the claim, so we can conclude that $p \in \PCrit_\Phi$, as needed.

We now prove the claim by induction on $m=|u|=|v|$.
If $m=1$ then $u,v\in\Alphabet$, so $q \in K_u \cap K_v$ implies that $q \in \Crit_\Phi$ and thus $p = (q)\phi_w^{-1} \in \PCrit_\Phi$, as needed.
Let us now assume that $m \geq 2$.
If $q \in \Crit_\Phi$, then once again $p = (q)\phi_w^{-1} \in \PCrit_\Phi$, so let us assume that $q \not\in \Crit_\Phi$.
Then there must exist a unique $x \in \Alphabet$ such that $q \in K_x$.
Hence, since $q \in K_u \cap K_v \cap K_2$, we have that $u=\hat{u}x$, $v=\hat{v}x$ and $w=\hat{w}x$, where $|\hat{u}|=|\hat{v}|=m-1$ and $|\hat{w}| = |w|-1 \geq m-1 \geq 1$, so $w \in \Alphabet^*$.
Let $\hat{q}=(q)\phi_x^{-1} \in K_{\hat{w}}$.
Then $\hat{q} \in K_{\hat{u}} \cap K_{\hat{v}}$ and $\hat{q} = (q)\phi_x^{-1} = \left((p)\phi_w\right)\phi_x^{-1} = (p)\phi_{\hat{w}}\phi_x\phi_x^{-1} = (p)\phi_{\hat{w}}$.
We can then apply the inductive hypothesis on $\hat{q}$, concluding that $p = (\hat{q})\phi_{\hat{w}}^{-1} \in \PCrit_\Phi$.
\end{proof}

\subsection{Hyperbolicity of the history graph}

Let us first see that a simple condition on the contracting ratio is sufficient for the history graph to be hyperbolic.

\begin{proposition}
\label{prop:IFS:no:2:squares}
If the contracting ratio of an injective pcf IFS is smaller than
$$\min\left\{ \dist(p,q) \mid p,q\in\PCrit \right\} / \max\left\{ \dist(p,q) \mid p,q\in\PCrit \right\},$$
then its history graph has no geodesic $2$-squares.

\end{proposition}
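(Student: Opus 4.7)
The plan is to assume, for contradiction, that a geodesic $2$-square exists, and to extract from its bottom chain two distinct post-critical points that are forced to lie in a single level-$2$ cell. Combined with the hypothesis $C < \delta/D$, where $\delta$ and $D$ are the minimum and maximum pairwise distances among distinct points of $\PCrit$, this will produce a numerical contradiction.

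Fix notation: let the top geodesic be $a \leftrightarrow m_1 \leftrightarrow b$ at level $n$ and the bottom be $c \leftrightarrow m_2 \leftrightarrow d$ at level $n+2$, with $c$ descending from $a$ and $d$ from $b$ via vertical paths of length $2$. Set $m_0 := \Pred^2(m_2)$; applying the augmented-tree property twice to the bottom edges forces $m_0$ to be a neighbor of both $a$ and $b$ in $\Gamma_n$, and the top geodesic assumption rules out $m_0 \in \{a,b\}$, so $m_0$ is a common neighbor distinct from $a$ and $b$, with $m_2 = y_2 y_1 m_0$ for some $y_1, y_2 \in \Alphabet$. By \cref{thm:IFS:history:graph} I pick $p \in K_c \cap K_{m_2}$ and $q \in K_{m_2} \cap K_d$, and note $p \ne q$, else $K_c \cap K_d \ni p$ would contradict the bottom being a geodesic of length $2$. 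By \cref{rmk:IFS:Crit:PCrit:colors}, $p$ and $q$ are also the distinguished points of the twice spanning-lifted level-$n$ edges $a \leftrightarrow m_0$ and $m_0 \leftrightarrow b$.

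The key technical step is showing that $A := (p)\phi_{m_0}^{-1}$ and $B := (q)\phi_{m_0}^{-1}$ are distinct elements of $\PCrit$ lying in $K_{y_2 y_1}$. Containment in $K_{y_2 y_1}$ is immediate from $K_{m_2} = (K_{y_2 y_1})\phi_{m_0}$, and distinctness follows from injectivity of $\phi_{m_0}$. Post-criticality comes from \cref{prop:IFS:Crit:colors,prop:IFS:PCrit:colors}: both write the distinguished point of a non-loop edge at level $n$ involving $m_0$ as $(P)\phi_{\hat{m}_0}$, where $\hat{m}_0$ is $m_0$ with its leading letter $\beta$ removed and $P$ lies in $K_\beta$ and is either critical (in the $C_\Crit$ case) or post-critical (in the $C_\PCrit$ case). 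Writing $P = (P')\phi_\beta$, the point $P' = (P)\phi_\beta^{-1}$ is post-critical in both cases (the one-letter shift of a critical address lies in $\mathcal{PC}$ by definition, and the shift of a post-critical address again lies in $\mathcal{PC}$); this gives $p = (P')\phi_{m_0}$, i.e., $A = P' \in \PCrit$, and likewise for $B$.

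Finally, pulling back once more by $\phi_{y_2 y_1}^{-1}$ produces distinct points $A'', B''$ in $K$ that remain post-critical by the same shift argument, whence $\delta \le \dist(A'', B'') \le D$. Contracting by $\phi_{y_2 y_1}$ yields $\dist(A, B) \le C^2 D$, while the fact that $A \ne B$ both lie in $\PCrit$ forces $\dist(A, B) \ge \delta$. Hence $\delta \le C^2 D$, which contradicts $C < \delta/D$ since the hypothesis implies $C^2 D < \delta^2/D \le \delta$. The main obstacle I expect is the uniform post-critical identification in the previous paragraph, which forces a short case distinction between $C_\Crit$ and $C_\PCrit$ color classes; once that identification is in place, the diameter estimate is essentially a one-liner.
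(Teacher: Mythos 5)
Your argument is correct and follows essentially the paper's own route: assume a geodesic $2$-square, pass to (double) spanning lifts of its bottom edges, attach to the middle column two \emph{distinct} post-critical points, and contradict the hypothesis by comparing the contraction ratio with the $\min/\max$ of distances in $\PCrit$ (the paper reads these points directly off the $C_\PCrit$ color labels and uses a single contraction step, whereas you pull back the geometric intersection points $p,q$ and use a $C^2$ estimate — an inessential difference). The one spot to tighten is your justification that $(P)\phi_\beta^{-1}\in\PCrit$ when $P\in\PCrit\cap K_\beta$: shifting a post-critical address of $P$ only yields this when that particular address ends in the letter $\beta$, and otherwise $P\in K_z\cap K_\beta$ with $z\neq\beta$ is in fact critical and the $\Crit$-type shift applies — precisely the short case analysis carried out in the claim inside the proof of \cref{lem:IFS:power:postcrit}, so the gap is minor and fillable (and you correctly anticipated that a case distinction is needed there).
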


\begin{proof}
Assume by contradiction that $\AT$ has a geodesic $2$-square.
Reasoning as in the proof of \cref{lem:squares:lift}, we can take the spanning lifts of the bottom horizontal edges and then their spanning lifts, obtaining a ``grid'' with vertices $u$, $v$ and $w$ at the top, $xu$, $yv$ and $zw$ in the middle and $\hat{x}xu$, $\hat{y}yv$ and $\hat{z}zw$ at the bottom, where the outer edges of the grid form a geodesic $2$-square.
By \cref{rmk:IFS:colors}, the horizontal edges of the square have colors in $C_\Crit \cup C_\PCrit$.
The intermediate horizontal edges $xu \leftrightarrow yv$ and $yv \leftrightarrow zw$ are the spanning lifts of the bottom horizontal edges $\hat{x}xu \leftrightarrow \hat{y}yv$, respectively, and the top horizontal edges $u \leftrightarrow v$ and $v \leftrightarrow w$ are the spanning lifts of $xu \leftrightarrow yv$ and $yv \leftrightarrow zw$, respectively.
Then, by definition of $\R_\Phi$, the four horizontal edges of the bottom subrectangle have colors in $C_\PCrit$, so ultimately the grid must have the following shape:
\begin{center}
\begin{tikzcd}
    u \arrow[r,leftrightarrow] \arrow[d,-stealth] & v \arrow[r,leftrightarrow] \arrow[d,-stealth] & w \arrow[d,-stealth]&\\
    xu \arrow[r,leftrightarrow,"\PCritColor{a}{b}{x}{y}"] \arrow[d,-stealth] & yv \arrow[r,leftrightarrow,"\PCritColor{c}{d}{y}{z}"] \arrow[d,-stealth] & zw \arrow[d,-stealth]&\\
    \hat{x}xu \arrow[r,leftrightarrow,"\PCritColor{\hat{a}}{\hat{b}}{\hat{x}}{\hat{y}}"] & \hat{y}yv \arrow[r,leftrightarrow,"\PCritColor{\hat{c}}{\hat{d}}{\hat{y}}{\hat{z}}"] & \hat{z}zw &
\end{tikzcd}
\end{center}

Since the horizontal edges of the bottom subrectangle have colors in $C_\PCrit$, we can apply \cref{prop:IFS:PCrit:colors,,rmk:IFS:Crit:PCrit:colors}, obtaining that there exist $r \in K_{\hat{x}xu} \cap K_{\hat{y}yv}$ and $s \in K_{\hat{y}yv} \cap K_{\hat{z}zw}$ such that
$$r = (a)\phi_u = (b)\phi_v = (\hat{a})\phi_{xu} = (\hat{b})\phi_{yv},\ s = (\hat{c})\phi_v = (\hat{d})\phi_w = (\hat{c})\phi_{yv} = (\hat{d})\phi_{zw}.$$
In particular, $b = (\hat{b})\phi_y$ and $c = (\hat{c})\phi_y$.
For all $i \in \Alphabet$, let $C_i$ be the contracting ratio of $\phi_i$ and let $C = \max \left\{ C_i \mid i \in \Alphabet \right\}$.
Using the hypothesis, we have that
\begin{multline*}
\dist(b,c) = \dist\left( (\hat{b})\phi_y, (\hat{c})\phi_y \right) \leq C_y \cdot \dist(\hat{b},\hat{c}) \leq C \cdot \dist(\hat{b},\hat{c}) <\\
<\frac{\min\left\{ \dist(p,q) \mid p,q\in\PCrit \right\}}{\max\left\{ \dist(p,q) \mid p,q\in\PCrit \right\}} \cdot \dist(\hat{b},\hat{c}) \leq \frac{\dist(b,c)}{\dist(\hat{b},\hat{c})} \cdot \dist(\hat{b},\hat{c}) = \dist(b,c),
\end{multline*}
which is a contradiction.
Thus, $\AT_\Phi$ cannot have geodesic $2$-squares, as needed.
\end{proof}

Let us now put together the results of this and the previous subsections.

\begin{theorem}
\label{thm:IFS:hyperbolic}
The history graph of an injective pcf IFS is hyperbolic.
\end{theorem}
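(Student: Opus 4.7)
The plan is to reduce the statement to \cref{prop:IFS:no:2:squares} by passing to a sufficiently high power of $\Phi$, and then to transfer hyperbolicity back via quasi-isometry. If $C < 1$ is a contracting ratio of $\Phi$, iterating \cref{rmk:IFS:power:contracting:ratios} shows that $\Phi^k$ admits $C^k$ as a contracting ratio, which tends to $0$ as $k \to \infty$. Moreover, $\Phi^k$ inherits injectivity from $\Phi$, and by \cref{lem:IFS:power:postcrit} its post-critical set satisfies $\PCrit_{\Phi^k} \subseteq \PCrit_\Phi$, so it is finite and $\Phi^k$ is again injective and pcf.

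Next, I would set
\[
\rho \coloneqq \frac{\min\{\dist(p,q) \mid p,q\in\PCrit_\Phi,\ p\neq q\}}{\max\{\dist(p,q) \mid p,q\in\PCrit_\Phi\}} > 0,
\]
the nontrivial case being $|\PCrit_\Phi|\geq 2$. Because $\PCrit_{\Phi^k} \subseteq \PCrit_\Phi$, shrinking the set can only raise the minimum and lower the maximum, so the analogous ratio for $\Phi^k$ is at least $\rho$. I would then choose $k$ large enough that $C^k < \rho$; at this point \cref{prop:IFS:no:2:squares} applies to $\Phi^k$, yielding that $\AT_{\Phi^k}$ has no geodesic $2$-squares. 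Iterating \cref{lem:squares:lift} rules out geodesic $n$-squares for every $n \geq 2$, so \cref{thm:Kaimano} delivers hyperbolicity of $\AT_{\Phi^k}$.

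To conclude, I would invoke \cref{rmk:IFS:power:history:graph} to identify $\AT_{\Phi^k}$ with the power $\AT_\Phi^k$ of the original history graph, and then use \cref{lem:IFS:power:quasi:isometry} to obtain a quasi-isometry between $\AT_\Phi^k$ and $\AT_\Phi$. Since hyperbolicity is a quasi-isometry invariant of geodesic graphs, $\AT_\Phi$ is itself hyperbolic.

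The main obstacle is keeping the ratio $\rho$ uniformly bounded below as $k$ varies, which is exactly what \cref{lem:IFS:power:postcrit} provides: without the containment $\PCrit_{\Phi^k} \subseteq \PCrit_\Phi$, the contracting-ratio bound demanded by \cref{prop:IFS:no:2:squares} could become unreachable as $k$ grows and the whole reduction would collapse. With that containment secured, the rest is a matter of assembling the machinery built up in \cref{sub:power}.
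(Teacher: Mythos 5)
Your proposal is correct and follows essentially the same route as the paper: pass to a power $\Phi^k$, use \cref{lem:IFS:power:postcrit} to keep the post-critical distance ratio bounded below while \cref{rmk:IFS:power:contracting:ratios} drives the contracting ratio below it, apply \cref{prop:IFS:no:2:squares} with \cref{thm:Kaimano}, and transfer hyperbolicity back through \cref{rmk:IFS:power:history:graph} and \cref{lem:IFS:power:quasi:isometry}. The only difference is that you spell out explicitly (via \cref{lem:squares:lift}) why excluding $2$-squares bounds all geodesic squares, a step the paper leaves implicit.
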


\begin{proof}
Recall that having no geodesic $2$-squares is sufficient to show that $\AT_\Phi$ is hyperbolic by \cref{thm:Kaimano}.
Then, by \cref{lem:IFS:power:quasi:isometry,,prop:IFS:no:2:squares}, it suffices to show that some power of $\Phi$ is such that the maximum of all its contracting ratios is smaller than the ratio
$$R \coloneqq \min\left\{ \dist(p,q) \mid p,q\in\PCrit \right\} / \max\left\{ \dist(p,q) \mid p,q\in\PCrit \right\}.$$
By \cref{lem:IFS:power:postcrit}, taking powers of an IFS either preserves or removes post-critical points, so $R$ does not decrease when taking powers.
Powers strictly decrease the maximum of the contracting ratios (\cref{rmk:IFS:power:contracting:ratios}), so there exists some $k$ such that $\Phi^k$ is as desired and we are done.
\end{proof}

\subsection{Identification of the attractor with the VERS limit space}

Consider the map $\chi$ from the set of geodesic rays of $\AT_\Phi$ to $K$ mapping each $\alpha$ with vertices $\dots a_2 a_1 \in \Alphabet^{-\omega}$ to the unique point in the intersection $\bigcap_{i \geq 1} K_{a_i}$.

\begin{lemma}
\label{lem:IFS:quotient:map:well:defined}
If $\alpha$ and $\beta$ are asymptotically equivalent geodesic rays of $\AT_\Phi$, then $(\alpha)\chi = (\beta)\chi$.
\end{lemma}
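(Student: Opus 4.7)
The plan is to leverage two facts: (i) by Theorem~\ref{thm:IFS:history:graph} every horizontal edge $u\leftrightarrow v$ of $\AT_\Phi$ witnesses a non-empty intersection $K_u\cap K_v$, and (ii) every vertical edge $v\to xv$ witnesses the inclusion $K_{xv}\subseteq K_v$. Together these say that along \emph{any} edge of $\AT_\Phi$, the associated cells $K_w$ at the two endpoints meet. Combined with the fact that cells $K_w$ shrink geometrically as $|w|$ grows, this should force $\chi(\alpha)$ and $\chi(\beta)$ to coincide.

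Concretely, first I would fix the asymptotic bound $D\geq 0$ such that $\dist_{\AT_\Phi}(\alpha_n,\beta_n)\leq D$ for every $n$, where $\alpha_n$ and $\beta_n$ denote the level-$n$ vertices of the two rays. For each $n$ I would pick a geodesic path $\alpha_n = p_0,p_1,\dots,p_{k_n}=\beta_n$ of length $k_n\leq D$ in $\AT_\Phi$. Since each step is either horizontal or vertical, applying (i) or (ii) to each edge gives $K_{p_i}\cap K_{p_{i+1}}\neq\emptyset$, so I can choose a point $x_i$ in each such intersection.

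Next I would estimate diameters. Let $C<1$ be a contracting ratio for $\Phi$, so $\mathrm{diam}(K_w)\leq C^{|w|}\cdot\mathrm{diam}(K)$. Writing $\ell_i$ for the level of $p_i$, since the geodesic has equal numbers of up- and down-steps and total length $\leq D$, the minimum visited level satisfies $\ell_i\geq n-D/2$. Using $x_{i-1},x_i\in K_{p_i}$ and bracketing by $\chi(\alpha)\in K_{p_0}=K_{\alpha_n}$ and $\chi(\beta)\in K_{p_{k_n}}=K_{\beta_n}$, a telescoping estimate yields
\[
\dist_{\mathbb{R}^m}\!\bigl(\chi(\alpha),\chi(\beta)\bigr)\;\leq\;\bigl(D+2\bigr)\,C^{\,n-D/2}\,\mathrm{diam}(K).
\]
Since $D$ is fixed and $C<1$, the right-hand side tends to $0$ as $n\to\infty$, forcing $\chi(\alpha)=\chi(\beta)$.

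The only mildly delicate point is arguing that the geodesic between $\alpha_n$ and $\beta_n$ cannot dip below level $n-D/2$; this is immediate from the level-parity observation above, but it is the step on which the whole estimate hinges, so I would state it explicitly. Everything else is a straightforward combination of Theorem~\ref{thm:IFS:history:graph}, the definition of $\chi$, and the contractive property of the maps in $\Phi$.
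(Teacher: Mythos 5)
Your argument is correct, but it takes a genuinely different route from the one in the paper. You work directly with the metric on $K$: you fix the asymptotic constant $D$, take for each $n$ a geodesic of length at most $D$ between the level-$n$ vertices, observe that every edge of $\AT_\Phi$ (horizontal by \cref{thm:IFS:history:graph}, vertical by the nesting $K_{xv}\subseteq K_v$) forces the cells at its endpoints to meet, use the level-parity count to keep the geodesic above level $n-D/2$, and conclude by a chaining estimate with $\mathrm{diam}(K_w)\leq C^{|w|}\mathrm{diam}(K)$; all of these steps are sound, and the delicate point you flag (the geodesic cannot dip more than $D/2$ levels) is handled correctly. The paper instead avoids any metric estimate at this stage: it first passes to a power of $\Phi$ (using \cref{rmk:IFS:power:history:graph,lem:IFS:power:quasi:isometry,prop:IFS:no:2:squares}) so that $\AT_\Phi$ has no geodesic $2$-squares, deduces via \cref{lem:squares:lift} that $\dist_{\AT_\Phi}(a_i,b_i)\leq 1$ for all $i$, and then uses \cref{thm:IFS:history:graph} together with \cref{rmk:IFS:Crit:PCrit:colors} to see that $K_{a_i}\cap K_{b_i}$ is eventually one and the same singleton $\{p\}$, which must equal both $(\alpha)\chi$ and $(\beta)\chi$. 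Your approach is more elementary and self-contained (it needs neither the power machinery nor the no-big-squares property, only the edge--intersection correspondence and the contraction ratio), and is essentially the standard shrinking-cells argument in the style of the reference the paper cites for comparison; the paper's version buys a purely combinatorial identification of the common limit point via the color classification and reuses its structural results on powers, which it then exploits again in the surrounding arguments.
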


Even if our augmented tree $\AT_\Phi$ is different from the one in \cite{SSIFS}, the arguments of its Lemma 4.1 hold almost verbatim.
Here we provide a different argument of our own that applies to our augmented tree and relies on previous results.

\begin{proof}
Assume that $\alpha$ and $\beta$ are equivalent geodesic rays with vertices $\dots a_2 a_1$ and $\dots b_2 b_1$, respectively.
Using \cref{lem:squares:lift}, it is easy to see that, in general, if an augmented tree $\AT$ has no geodesic $n$-squares then $\dist_\AT(a_i,b_i) < n$ for all $i \geq 1$.
By \cref{prop:IFS:no:2:squares}, up to passing to powers (which we can do without changing the attractor of the IFS nor boundary of $\AT_\Phi$ and the asymptotic equivalence, see \cref{rmk:IFS:power:history:graph,,lem:IFS:power:quasi:isometry}), we can assume that $\AT_\Phi$ has no geodesic $2$-squares.
This means that $\dist_{\AT_\Phi}(a_i,b_i) \leq 1$ for all $i \geq 1$.
If $\dist_{\AT_\Phi}(a_i,b_i)$ is eventually $0$ then there is nothing to prove, so let it be eventually $1$.
By \cref{thm:IFS:history:graph,,rmk:IFS:Crit:PCrit:colors}, this means that $K_{a_i} \cap K{b_i}$ is eventually the same singleton $\{p\} \subseteq K$.
We know that $(\alpha)\chi = \bigcap_{i \geq 1} K_{a_i}$ and $(\beta)\chi = \bigcap_{i \geq 1} K_{b_i}$ are singletons, so they must both be $p$ and, in particular, they must be equal.
\end{proof}

Thus, if $X$ is the limit space of the VERS $\R_\Phi$ (i.e., the boundary of $\AT_\Phi$), the quotient map $\overline{\chi} \colon X \to K$ is well-defined.
Let us show that it is a homeomorphism.

\begin{theorem}
\label{thm:IFS:attractor:is:VERS:limit:space}
The attractor of an injective pcf IFS $\Phi$ is homeomorphic to the limit space of the VERS $\R_\Phi$.
\end{theorem}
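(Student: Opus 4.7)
The plan is to identify both $X = \partial \AT_\Phi$ and $K$ as the same quotient of $\Alphabet^{-\omega}$ by a common equivalence relation, with $\overline{\chi}$ being the canonical homeomorphism between the two descriptions. The first key observation is that every infinite geodesic ray in $\AT_\Phi$ starting from $\varepsilon$ is purely vertical: horizontal edges preserve depth while vertical edges change it by one, so $d_{\AT_\Phi}(\varepsilon, v)$ equals the depth of $v$, and along a geodesic ray from $\varepsilon$ the distance to $\varepsilon$ grows by one at each step, forcing each edge to be a vertical descent. Thus infinite geodesic rays from $\varepsilon$ are in natural bijection with $\Alphabet^{-\omega}$, giving a map $\pi \colon \Alphabet^{-\omega} \to X$ sending $\dots a_2 a_1$ to the class of the vertical ray $\varepsilon, a_1, a_2 a_1, \dots$. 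This $\pi$ is continuous (two words sharing a long prefix give vertical rays that meet at a deep vertex, making the Gromov product at $\varepsilon$ large) and surjective; since $\Alphabet^{-\omega}$ is compact and $X$ is Hausdorff, $\pi$ is automatically a quotient map and $X \cong \Alphabet^{-\omega}/{\sim_\AT}$, where $\alpha \sim_\AT \beta$ iff $d_{\AT_\Phi}(a_n \dots a_1, b_n \dots b_1)$ stays bounded in $n$. Analogously, the classical IFS address map $\chi_{\mathrm{IFS}} \colon \Alphabet^{-\omega} \to K$ sending $\dots a_2 a_1$ to the unique point of $\bigcap_n K_{a_n \dots a_1}$ is continuous and surjective, hence likewise a quotient map, so $K \cong \Alphabet^{-\omega}/{\sim_{\mathrm{IFS}}}$ where $\alpha \sim_{\mathrm{IFS}} \beta$ iff they are addresses of the same point of $K$.

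The crux is then to show $\sim_\AT = \sim_{\mathrm{IFS}}$. The inclusion $\sim_\AT \subseteq \sim_{\mathrm{IFS}}$ is precisely the content of Lemma \ref{lem:IFS:quotient:map:well:defined}. For the other direction, suppose $\chi_{\mathrm{IFS}}(\alpha) = \chi_{\mathrm{IFS}}(\beta) = p$; then $p \in K_{a_n \dots a_1} \cap K_{b_n \dots b_1}$ for every $n$, so \cref{thm:IFS:history:graph} guarantees that $a_n \dots a_1$ and $b_n \dots b_1$ are either equal or joined by a horizontal edge in $\Gamma_n$. Either way $d_{\AT_\Phi}(a_n \dots a_1, b_n \dots b_1) \leq 1$ for all $n$, and hence $\alpha \sim_\AT \beta$. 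Once both equivalence relations agree, the two quotients coincide via the homeomorphism induced by the identity on $\Alphabet^{-\omega}$; comparing definitions this induced map is exactly $\overline{\chi}$, which is therefore a homeomorphism.

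The main obstacle I anticipate is the verification that $\pi$ is a quotient map, i.e., that the visual topology on $X$ coincides with the quotient of the product topology on $\Alphabet^{-\omega}$ via $\pi$. This is the only place where one interacts directly with the abstract Gromov boundary construction, but it becomes essentially routine thanks to the verticality of all geodesic rays from $\varepsilon$: a common prefix of length $n$ translates into a Gromov product of at least $n$ at the basepoint, which already suffices for continuity of $\pi$, and then compactness of $\Alphabet^{-\omega}$ together with the Hausdorff property of $X$ upgrades this continuous surjection to a quotient map automatically.
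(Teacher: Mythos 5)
Your proposal is correct, and its key identifications are the same as the paper's: one inclusion of relations is exactly \cref{lem:IFS:quotient:map:well:defined}, and the other (same address $\Rightarrow$ bounded distance) is exactly the paper's injectivity argument via \cref{thm:IFS:history:graph}, with compactness and Hausdorffness doing the final upgrade. Where you genuinely diverge is in how the topology is handled. The paper proves that $\overline{\chi}\colon\partial\AT_\Phi\to K$ is a continuous bijection directly, importing the continuity argument of \cite[Theorem 4.3]{SSIFS} (itself modelled on Kaimanovich) with the diameter bound $\mathrm{diam}(K_{x_n\dots x_1})\leq C^n\,\mathrm{diam}(K)$. You instead avoid touching $\overline{\chi}$'s continuity altogether: you observe that all geodesic rays from the root are vertical (distance to the root equals depth), so the boundary is parametrized by $\Alphabet^{-\omega}$ via a map $\pi$ that is continuous by a standard Gromov-product estimate, and likewise the address map $\Alphabet^{-\omega}\to K$ is continuous; both being continuous surjections from a compact space onto Hausdorff spaces, they are quotient maps, so it suffices to match the two equivalence relations, after which $\overline{\chi}$ is the induced homeomorphism. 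Your route is more self-contained (no adaptation of external continuity arguments is needed, only the routine estimate that a common suffix of length $n$ forces Gromov product at least $n$ at the basepoint, plus hyperbolicity of $\AT_\Phi$ from \cref{thm:IFS:hyperbolic} to know the boundary is compact Hausdorff), while the paper's route is shorter given the cited literature; both ultimately rest on the same two structural facts, \cref{thm:IFS:history:graph} and \cref{lem:IFS:quotient:map:well:defined}.
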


\begin{proof}
The map $\overline{\chi}$ is surjective because every point has an address (see \cref{sub:IFS:background}).
As for injectivity, assume that two geodesic rays $\alpha$ and $\beta$ of $\AT_\Phi$ (say with vertices $\dots a_2 a_1$ and $\dots b_2 b_1$, respectively) have the same image via $\chi$.
This means that $\bigcap_{n\geq1} K_{a_n} = \bigcap_{n\geq1} K_{b_n}$ is non-empty, so the intersections $K_{a_n} \cap K_{b_n}$ are non-empty.
By \cref{thm:IFS:history:graph}, this implies that there are edges $a_n \leftrightarrow b_n$, so $\alpha$ and $\beta$ are asymptotically equivalent, as needed.
Since $\chi$ is a bijection between two compact Hausdorff spaces, we now only need to show that it is continuous.

Even if the augmented tree considered in \cite{SSIFS} is different from our history graph, the proof of continuity of \cite[Theorem 4.3]{SSIFS} (which in turn is based on the proof of \cite[Theorem 3.24]{Kaimanovich}) works with minimal modifications.
Indeed, Equation 2.1, Proposition 2.2 and Theorem 2.3 \cite{SSIFS} are about any hyperbolic augmented tree;
as already noted, Lemma 4.1 of \cite{SSIFS} and its proof also work in our setting;
finally, the inequality $\mathrm{diam}(K_{x_n \dots x_1}) \leq C^n \mathrm{diam}(K)$ often used in the proof of \cite[Theorem 4.3]{SSIFS} applies to our case too.
\end{proof}


\section{Belk-Forrest edge replacement systems}
\label{sec:ERSs}

In this section we realize as limit spaces of VERSs the limit spaces of expanding ERSs (edge replacement systems) introduced by Belk and Forrest in \cite{BF19}.

To be consistent with the literature about ERSs, here we will use right-infinite words in place of left-infinite words, and every word will be read from left to right.

\subsection{Background on Belk-Forrest ERSs}

Let us recall the basic notions.

\subsubsection{Edge replacement systems}

\begin{definition}
An \textbf{edge replacement system} (or \textbf{ERS}) consists of:
\begin{itemize}
    \item a set $C$ of colors;
    \item a \textbf{base graph} $\E_0$ colored by $C$;
    \item for each $c \in C$, a \textbf{replacement graph} $X_c$ colored by $C$ and equipped with two vertices $\iota_c$ and $\tau_c$, called the \textit{initial} and \textit{terminal} vertices of $X_c$.
\end{itemize}
\end{definition}

Given a graph $\E$ that is colored by $C$ and a $c$-colored edge $e \in E(\E)$, we can \textbf{expand} $e$ in $\E$ by replacing it with $X_c$, identifying $\iota(e)$ and $\tau(e)$ with $\iota_c$ and $\tau_c$, respectively.
We will often refer to this operation as an ERS expansion, to avoid confusion with VERS expansions.

\subsubsection{The gluing relation}

\begin{definition}
\label{def:full:expansion:sequence:ERS}
Given an ERS, its \textbf{full expansion sequence} is the sequence of graphs $\E_i$ such that $\E_1 = X_0$ and $\E_{i+1}$ is obtained by expanding every edge of $\E_i$.
\end{definition}

The \textbf{symbol space} of an ERS $\ERS$, denoted by $\Omega_\ERS$, is the set of infinite words $e_1 e_2 \dots$ such that, for all $n \in \mathbb{N}$, the prefix $e_1 \dots e_n$ is an edge of $\E_n$.
This is an initial edge shift, and is thus a Stone space when equipped with the topology generated by the cylinders (the sets of elements of $\Omega_\ERS$ with a given prefix).

\begin{definition}
The \textbf{gluing relation} is the relation $\sim$ on $\Omega_\ERS$ defined by
$$x_1 x_2 \dots \sim y_1 y_2 \dots \iff \forall n \in \mathbb{N},\ x_1 \dots x_n \text{ and } y_1 \dots y_n \text{ are equal or adjacent}.$$
\end{definition}

This is not always an equivalence relation, but it is if the ERS satisfies certain conditions, which we recall right below.

\subsubsection{Expanding ERSs and their limit spaces}

\begin{definition}[Definition 1.8 of \cite{BF19}]
\label{def:expanding:ERS}
An ERS is \textbf{expanding} if
\begin{itemize}
    \item the base and replacement graphs do not feature isolated vertices (i.e., vertices on which no edge is incident);
    \item each replacement graph $X_c$ features at least a vertex other than $\iota_c$ and $\tau_c$; 
    \item in each replacement graph $X_c$ there is no edge joining $\iota_c$ with $\tau_c$.
\end{itemize}
\end{definition}

It is proved in \cite[Proposition 1.9]{BF19} that the gluing relation of an expanding ERS is an equivalence relation, which prompts the following definition.

\begin{definition}
The \textbf{limit space} of an expanding ERS $\ERS$ is $\Omega_\ERS / \sim$.
\end{definition}

\begin{example}
\label{ex:ERS:basilica}
Consider the ERS $\ERS_\BasilicaSpc$ with a unique color $\star$ and with base and replacement graphs depicted in \cref{fig:ERS:basilica}.
The limit space is homeomorphic to the basilica Julia set, depicted in \cref{fig:basilica}.
\end{example}

\begin{figure}
\centering
\begin{tikzpicture}
    \begin{scope}[xshift=-4.5cm]
    \node at (-3/4,0) {$X_0 =$};
    \node[vertex] (center) at (1,0){};
    \draw[edge] (center) to[out=150,in=210,min distance=1.2cm] node[left]{$L$} (center);
    \draw[edge] (center) to[out=-30,in=30,min distance=1.2cm] node[right]{$R$} (center);
    \end{scope}
    \begin{scope}
    \node at (-2/3,0) {$X_{\star} =$};
    \node[vertex] (left) at (0,0){};
    \node[vertex] (center) at (1.25,0){};
    \node[vertex] (right) at (2.5,0){};
    \node[above] at (left) {$\iota_{\star}$};
    \node[above] at (right) {$\tau_{\star}$};
    \draw[edge] (left) to node[above]{$0$} (center);
    \draw[edge] (center) to[out=60,in=120,min distance=1cm] node[above]{$1$} (center);
    \draw[edge] (center) to node[above]{$2$} (right);
    \end{scope}
\end{tikzpicture}
\caption{An ERS for the basilica Julia set.}
\label{fig:ERS:basilica}
\end{figure}
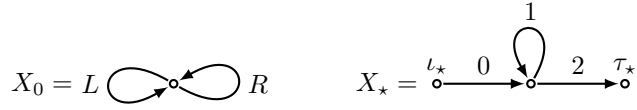

\subsection{VERSs for Belk-Forrest ERSs}

Let us start by defining the tools that we will use to associate every $\E_i$ of the ERS to some $\Gamma_i$ of a VERS.

\begin{definition}
\label{def:graph:partition}
We say that graphs $A_1, \dots, A_k$ form a \textbf{partition} of a graph $A$ if
\begin{itemize}
    \item $V(A_i) \subseteq V(A)$ for each $i$ and $\bigcup_{i=1,\ldots,k} V(A_i) = V(A)$;
    \item $E(A_i) \subseteq E(A)$ for each $i$ and $\bigcup_{i=1,\ldots,k} E(A_i) = E(A)$;
    \item $E(A_i) \cap E(A_j) = \emptyset$ for each $i \neq j$.
\end{itemize}
\end{definition}

\begin{definition}
\label{def:barycentric:subdivision}
Given a graph $\Gamma$ colored by a set $C$, its \textbf{barycentric subdivision} is the graph $\bary{\Gamma}$ colored by the union of two disjoint copies $C_\iota$ and $C_\tau$ of $C$, whose set of vertices is $V(\Gamma) \cup E(\Gamma)$ and whose edges are
\begin{itemize}
    \item $v \rightarrow e$, colored by $c_\iota$, if $v \in V(\Gamma)$, $e \in E(\Gamma)$ is colored by $c$ and $\iota(e)=v$;
    \item $e \rightarrow w$, colored by $c_\tau$, if $w \in V(\Gamma)$, $e \in E(\Gamma)$ is colored by $c$ and $\tau(e)=w$.
\end{itemize}
We equip barycentric subdivisions with a type $\type$ that maps every element of $V(\Gamma)$ to $\vtype$ and each edge $e \in E(\Gamma)$ to its color $\col(e)$, with $T = \{\vtype\} \cup C$ the set of types.
\end{definition}

\begin{remark}
\label{rmk:ERS:kappa}
Let $\kappa$ be the map $C_\iota \cup C_\tau \to \left( C \cup \{\vtype\} \right)^2$ that maps $c_\iota$ to $(\vtype,c)$ and $c_\tau$ to $(c,\vtype)$, for each $c \in C$.
Then the barycentric subdivision of a graph $\Gamma$ colored by $C$ is a $\kappa$-compatible graph (\cref{def:kappa:compatible:colors:types}).
\end{remark}

Given an ERS $\ERS$ with base graph $X_0$ and replacement graphs $X_c$ ($c \in C$), we build a VERS $\R_\ERS$ as follows.
\begin{itemize}
    \item The graph $\Sigma$ that defines the initial edge shift $\V$ is as follows:
    \begin{itemize}
        \item the set of vertices of $\Sigma$ is $C \cup \{\vtype, s\}$;
        \item $\Sigma$ has a loop at $\vtype$, which we call $V$;
        \item for each $c'$-colored $e \in E(X_c)$, $\Sigma$ has an edge $e$ from $c$ to $c'$;
        \item for each $v \in V(X_c) \setminus \{\iota_c,\tau_c\}$, $\Sigma$ has an edge $v$ from $c$ to $\vtype$;
        \item for each $c$-colored $e \in E(X_0)$, $\Sigma$ has an edge $e$ from $s$ to $c$;
        \item for each $v \in E(X_0)$, $\Sigma$ has an edge $v$ from $s$ to $\vtype$.
    \end{itemize}
    \item The set of colors of $\R_\ERS$ is the disjoint union $\{c_0\} \cup C_\iota \cup C_\tau$, with $C_\iota = \{c_\iota \mid c \in C\}$, $C_\tau = \{c_\tau \mid c \in C\}$ two disjoint copies of the set $C$ of colors of $\ERS$.
    \item For each color $c \in C$, the replacement graphs $R_{c_\iota}$ and $R_{c_\tau}$ form an arbitrary partition of the barycentric subdivision of $X_c$ such that $R_{c_\iota}$ does not feature the vertex $\tau_c$ and $R_{c_\tau}$ does not feature the vertex $\iota_c$.
    The graph $R_{c_0}$ is the barycentric subdivision of the base graph $X_0$ of $\ERS$.
    \item The graph $\Gamma_0$ is a single loop colored by $c_0$.
\end{itemize}

Following our \cref{ex:ERS:basilica}, the graph $\Sigma$ that defines the shift $\V$ and the replacement graphs for the VERS $\R_\BasilicaSpc$ are depicted in \cref{fig:VERS:basilica}.
The set of colors is $\{\textcolor{gray}{c_0},\textcolor{blue}{c_\iota},\textcolor{red}{c_\tau}\}$ and the base graph, not depicted, consists of a gray loop.

\begin{figure}
\centering
\begin{tikzpicture}[font=\small]
    \begin{scope}[xshift=-3.75cm,yshift=0cm]
    \node at (-1.75,1/2) {$\Sigma =$};
    \node[state] (s) at (-1,1){$s$};
    \node[state] (v) at (1,1){$\vtype$};
    \node[state] (b) at (0,-.5){$\textcolor{blue}{b}$};
    \draw[edge] (v) to[out=-30,in=30,min distance=1cm] node[right]{$v$} (v);
    \draw[edge] (s) to node[above]{$\nu_0$} (v);
    \draw[edge] (s) to[out=285,in=135] node[left]{$L$} (b);
    \draw[edge] (s) to[out=345,in=105] node[left]{$R$} (b);
    \draw[edge] (b) to node[right]{$\nu_1$} (v);
    \draw[edge] (b) to[out=150,in=210,min distance=1cm] node[left]{$0$} (b);
    \draw[edge] (b) to[out=240,in=300,min distance=1cm] node[below]{$1$} (b);
    \draw[edge] (b) to[out=-30,in=30,min distance=1cm] node[right]{$2$} (b);
    \end{scope}
    \begin{scope}[xshift=3cm,yshift=2cm]
    \node (eps) at (-2.5,0){$\varepsilon$};
    \draw[edge,gray] (eps) to[out=150,in=210,min distance=1cm] (eps);
    \node at (-1.75,0) {$\to$};
    \node (L) at (-1,0) {$L$};
    \node (C) at (.2,0) {$C$};
    \node (R) at (1.4,0) {$R$};
    \draw[edge,blue] (C) to[out=150,in=30] (L);
    \draw[edge,red] (L) to[out=-30,in=210] (C);
    \draw[edge,blue] (C) to[out=-30,in=210] (R);
    \draw[edge,red] (R) to[out=150,in=30] (C);
    \end{scope}
    \begin{scope}[xshift=3cm]
    \node (i) at (-3.5,0){$\mathrm{i}$};
    \node (t) at (-2.5,0){$\mathrm{t}$};
    \draw[edge,blue] (i) to (t);
    \node at (-1.75,0) {$\to$};
    \node (v1i) at (-1,-.5) {$\nu_1\mathrm{i}$};
    \node (0t) at (.2,-.5) {$0\mathrm{t}$};
    \node (vt) at (1.4,-.5) {$\nu\mathrm{t}$};
    \node (2t) at (2.6,-.5) {$2\mathrm{t}$};
    \node (1t) at (1.4,.5) {$1\mathrm{t}$};
    \draw[edge,blue] (v1i) to (0t);
    \draw[edge,red] (0t) to (vt);
    \draw[edge,blue] (vt) to (1t);
    \end{scope}
    \begin{scope}[xshift=3cm,yshift=-2cm]
    \node (i) at (-3.5,0){$\mathrm{i}$};
    \node (t) at (-2.5,0){$\mathrm{t}$};
    \draw[edge,red] (i) to (t);
    \node at (-1.75,0) {$\to$};
    \node (0i) at (-1,-.5) {$0\mathrm{i}$};
    \node (vi) at (.2,-.5) {$\nu\mathrm{i}$};
    \node (2i) at (1.4,-.5) {$2\mathrm{i}$};
    \node (v1t) at (2.6,-.5) {$\nu_1\mathrm{t}$};
    \node (1t) at (.2,.5) {$1\mathrm{t}$};
    \draw[edge,blue] (vi) to (2i);
    \draw[edge,red] (2i) to (v1t);
    \draw[edge,red] (1t) to (vi);
    \end{scope}
\end{tikzpicture}
\caption{The VERS associated to the ERS $\ERS_\BasilicaSpc$.}
\label{fig:VERS:basilica}
\end{figure}

\subsection{Expanding ERSs produce expanding VERS}

Let us explore the concrete connections between an ERS $\ERS$ and the VERS $\R_\ERS$.

\begin{proposition}
\label{prop:ERS:to:VERS}
Given a graph $X$ that is colored by $C$, let $X'$ be its full ERS expansion.
Then the VERS expansion of the barycentric subdivision $\bary{X}$ is the barycentric subdivision of the ERS expansion $\bary{(X')}$.
\end{proposition}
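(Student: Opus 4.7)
My plan is to unfold the definitions on both sides and verify that the vertex sets (with their types) and the edge sets (with their colors) match. The argument is essentially a bookkeeping exercise; the conceptual content is that the ancillary graph $\Sigma$ and the partition $R_{c_\iota}\sqcup R_{c_\tau}=\bary{X_c}$ have been designed so that a single VERS expansion of $\bary{X}$ performs exactly one ERS expansion of $X$ followed by a barycentric subdivision.

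First I would compare vertices. On one hand, $V(\bary{X'})=V(X')\sqcup E(X')$, and decomposing $X'$ according to which edge of $X$ was expanded yields
\[
V(X')=V(X)\sqcup\bigsqcup_{e\in E(X)}\bigl(V(X_{\col(e)})\setminus\{\iota_{\col(e)},\tau_{\col(e)}\}\bigr),\qquad E(X')=\bigsqcup_{e\in E(X)}E(X_{\col(e)}).
\]
On the other hand, by \cref{def:VERS} the vertices of the VERS expansion of $\bary{X}$ are pairs $xu$ with $u\in V(\bary{X})$ and $x\in E(\Sigma)$ satisfying $\iota(x)=\type(u)$. Reading off $\Sigma$: when $u=v\in V(X)$ (type $\vtype$) the only choice is the loop $x=V$, producing $Vv$; when $u=e\in E(X)$ (type $c=\col(e)$) the choices for $x$ are exactly $E(X_c)\cup\bigl(V(X_c)\setminus\{\iota_c,\tau_c\}\bigr)$. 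The matching $Vv\leftrightarrow v$, $v'e\leftrightarrow v'$, $fe\leftrightarrow f$ is then the desired bijection, and it preserves the type assignments on both sides.

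Next I would compare edges. Every edge of $\bary{X}$ is either a $c_\iota$-edge $v\to e$ or a $c_\tau$-edge $e\to w$, where $c=\col(e)$ and $v,w$ are the endpoints of $e$ in $X$. The VERS expansion substitutes copies of $R_{c_\iota}$ (with $\mathrm{i}\mapsto v$, $\mathrm{t}\mapsto e$) and $R_{c_\tau}$ (with $\mathrm{i}\mapsto e$, $\mathrm{t}\mapsto w$) for these edges. Because $R_{c_\iota}$ and $R_{c_\tau}$ partition $\bary{X_c}$ in the sense of \cref{def:graph:partition}, the union of the edges they contribute at $e$ is a copy of $\bary{X_c}$ with $\iota_c$ identified with $v$ and $\tau_c$ with $w$; this is precisely the barycentric subdivision of the copy of $X_c$ that replaces $e$ inside $X'$. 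Taking the disjoint union over $e\in E(X)$ then assembles these local pieces into $\bary{X'}$.

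The main subtlety I would want to treat carefully is why the substitutions $\iota_c\mapsto v$ and $\tau_c\mapsto w$ glue consistently across the two halves of the expansion of $e$ without double-counting. This relies precisely on the hypothesis that $\tau_c\notin V(R_{c_\iota})$ and $\iota_c\notin V(R_{c_\tau})$: these conditions ensure that $v$ enters only via the $\mathrm{i}$-substitution in $R_{c_\iota}$ and $w$ only via the $\mathrm{t}$-substitution in $R_{c_\tau}$, so no vertex of $\bary{X'}$ is produced twice. Everything else — color agreement, $\kappa$-compatibility (\cref{rmk:ERS:kappa}), and type preservation — follows mechanically from the definitions.
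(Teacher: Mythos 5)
Your argument is correct and takes essentially the same route as the paper's proof: the two halves of each edge of $X$ in $\bary{X}$ are replaced by the two pieces of the partition $R_{c_\iota}\sqcup R_{c_\tau}$ of $\bary{X_c}$, which reassemble into the barycentric subdivision of the ERS expansion of that edge. The paper states this in three sentences, while you make the vertex and edge bijections (and the role of $\tau_c\notin V(R_{c_\iota})$, $\iota_c\notin V(R_{c_\tau})$ in gluing consistently) explicit — a faithful elaboration rather than a different approach.
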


\begin{proof}
By the definition of $\R_\ERS$, the VERS expansion of $\bary{X}$ is obtained by replacing, for each color $c \in C$, every $c_\iota$- and a $c_\tau$-colored edges of $X$ by the two pieces of a partition of the barycentric subdivision of $X_c$.
Thus, replacing the two edges of $\bary{X}$ that correspond to the two halves of an edge of $X$ results in a replacement of the two edges by the barycentric subdivision of $X_c$.
Ultimately, this shows that the VERS expansion of $\bary{X}$ is the barycentric subdivision of $X'$.
\end{proof}

\begin{corollary}
\label{cor:ERS:to:VERS:full:expansion}
The $n$-th expansion $\Gamma_n$ of the VERS $\R_\ERS$ is the barycentric subdivision of the $n$-th graph $\E_n$ of the full expansion sequence of the ERS $\ERS$, $\forall n \geq 1$.
\end{corollary}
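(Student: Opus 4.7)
The plan is a straightforward induction on $n$, with essentially all the work already done by \cref{prop:ERS:to:VERS}. The only nontrivial aspects are handling the base case (which behaves slightly differently because of the distinguished color $c_0$) and verifying that the inductive step is a clean application of the previous proposition.

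For the base case $n=1$, I would unravel the definitions: $\Gamma_0$ is a single $c_0$-colored loop, so its VERS expansion $\Gamma_1$ is obtained by replacing this loop with a copy of $R_{c_0}$. But $R_{c_0}$ was defined to be the barycentric subdivision $\bary{X_0}$ of the base graph of $\ERS$, and by \cref{def:full:expansion:sequence:ERS} we have $\E_1 = X_0$. Hence $\Gamma_1 = \bary{X_0} = \bary{\E_1}$, as required.

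For the inductive step, assume $\Gamma_n = \bary{\E_n}$. Since $\E_n$ is a graph colored by $C$, we may apply \cref{prop:ERS:to:VERS} with $X = \E_n$: the VERS expansion of $\bary{\E_n}$ equals the barycentric subdivision of the ERS expansion of $\E_n$. By \cref{def:full:expansion:sequence:ERS}, the ERS expansion of $\E_n$ is precisely $\E_{n+1}$. Therefore
\[
\Gamma_{n+1} \;=\; \text{(VERS expansion of } \Gamma_n \text{)} \;=\; \text{(VERS expansion of } \bary{\E_n} \text{)} \;=\; \bary{\E_{n+1}},
\]
which closes the induction.

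I do not foresee a genuine obstacle here: the corollary is really a bookkeeping consequence of \cref{prop:ERS:to:VERS}. The only point where some care is needed is the base case, where one has to check that the special color $c_0$ and the single loop $\Gamma_0$ were set up in the definition of $\R_\ERS$ exactly so that the first expansion reproduces $\bary{X_0}$; this is by design and requires no new argument.
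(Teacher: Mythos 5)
Your proposal is correct and follows exactly the paper's own argument: the base case $n=1$ via $\Gamma_1 = R_{c_0} = \bary{X_0} = \bary{\E_1}$, and then induction using \cref{prop:ERS:to:VERS} together with \cref{def:full:expansion:sequence:ERS}. No differences worth noting.
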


\begin{proof}
For $n=1$, the graph $\Gamma_1$ is obtained by expanding the graph $\Gamma_0$, which consists of a single loop colored by $c_0$.
This produced a graph $R_{c_0}$ that is the barycentric subdivision of the base graph $X_0$ of $\ERS$, which is the first term $\E_1$ of the full expansion sequence.
Note also that each edge of $\Gamma_1$ is colored by $c_\iota$ if it consists of the initial half of an edge of $\E_1$ and by $c_\tau$ if it consists of the final half of an edge of $\E_1$.
Recall that $\E_{n+1}$ is obtained by expanding every edge of $\E_n$ (\cref{def:full:expansion:sequence:ERS}).
We can thus conclude by induction:
assuming that $\Gamma_n = \bary{(\E_n)}$, by \cref{prop:ERS:to:VERS} we have that $\Gamma_{n+1} = \bary{(\E_{n+1})}$.
\end{proof}

\begin{theorem}
\label{thm:expanding:ERS:is:expanding:VERS}
If an ERS $\ERS$ is expanding (in the sense of \cref{def:expanding:ERS}) then its VERS $\R_\ERS$ is expanding (in the sense of \cref{def:expanding:VERS}).
\end{theorem}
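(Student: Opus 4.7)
The plan is to leverage \cref{cor:ERS:to:VERS:full:expansion} and its generalization \cref{prop:ERS:to:VERS}, which together say that VERS expansion commutes with barycentric subdivision. First I would observe that for $n$ large enough to exclude $c_0$-colored edges from any path of length $n$ (since $c_0$ only appears in $\Gamma_0$), any $\kappa$-compatible path $\Gamma$ of length $n$ in $\R_\ERS$ is isomorphic to a path in $\bary{W}$ for some $C$-colored graph $W$: the $\vtype$-typed vertices of $\Gamma$ correspond to vertices of $W$, while the $C$-typed vertices correspond to midpoints of edges of $W$. Iterating \cref{prop:ERS:to:VERS}, the $n$-th VERS expansion $\Gamma^{(n)}$ then embeds as a subgraph of $\bary{W^{(n)}}$, where $W^{(n)}$ is the $n$-th ERS expansion of $W$.

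The central technical input would be a distance-doubling lemma: for any expanding ERS and any $C$-colored graph $P$ together with vertices $u,v \in V(P) \subseteq V(P^{(1)})$, one has $d_{P^{(1)}}(u,v) \geq 2 \cdot d_P(u,v)$. The idea is to decompose any walk in $P^{(1)}$ from $u$ to $v$ into segments through individual cells (copies of replacement graphs $X_c$ obtained by expanding single edges of $P$). The expanding condition forbids edges from $\iota_c$ to $\tau_c$ in $X_c$, forcing every segment that connects distinct boundary vertices of a cell to use at least $2$ edges; contracting each cell-segment to the corresponding edge of $P$ then yields a walk in $P$ of at most half the length, and iterating $n$ times gives $d_{P^{(n)}}(u,v) \geq 2^n d_P(u,v)$. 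Passing to the barycentric subdivision doubles distances once more for pairs of original vertices.

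Combining these ingredients, for any VERS path $\Gamma$ of length $n\geq 2$ with endpoints $u,v$ projecting to $\bar{u}, \bar{v} \in V(W)$, successors of $u$ and $v$ in $\Gamma^{(n)}$ sit near $\bar{u}$ and $\bar{v}$ in $\bary{W^{(n)}}$, and distances between any such pair of successors are bounded below by roughly $2^{n+1} \cdot d_W(\bar{u}, \bar{v}) - O(1)$. This exceeds $n$ for $n \geq 2$ whenever $d_W(\bar{u}, \bar{v}) \geq 1$, so no geodesic of length exactly $n$ can join a successor of $u$ to a successor of $v$, giving the $n$-expanding property.

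The hardest part will be the degenerate cases where $\bar{u} = \bar{v}$, i.e.\ closed paths or paths with both endpoints $C$-typed at the same midpoint, whose successors fill the interior of a single cell. For these one must track how the barycentric subdivision interacts with iterated ERS expansion inside a cell, and exploit the fact that $\Gamma^{(n)}$ is only a subgraph of $\bary{W^{(n)}}$ (so shortcuts between successors must travel through the part of $\Gamma^{(n)}$ actually spanned by the expansion of $\Gamma$) together with the cell-growth property of expanding ERSs to confirm that even in this regime the relevant distances avoid $n$.
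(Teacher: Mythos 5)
Your overall strategy---pass through \cref{prop:ERS:to:VERS} to barycentric subdivisions of ERS expansions and let the expanding condition of $\ERS$ force distances to grow---is the same as the paper's, and your distance-doubling lemma is essentially correct: the cell-decomposition argument (no $\iota_c$--$\tau_c$ edge forces every crossing of a cell to cost at least $2$) is a quantitative strengthening of what the paper uses. The paper, however, needs far less machinery: it fixes $n=4$, performs a \emph{single} expansion (each expanded edge either disconnects its endpoints or places them at distance at least $2$), splits into two cases according to whether the endpoints of the length-$4$ path are $\vtype$-typed or $C$-typed, and then relies on the fact that distances between successor sets never decrease under further expansions (\cref{rmk:lift:distance}); no iteration and no $2^n$ bound are required.

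The genuine gaps are in your endgame. First, the reason you give for discarding $c_0$-colored edges is not right: \cref{def:expanding:VERS} quantifies over abstract $\kappa$-compatible paths, not over subpaths of the $\Gamma_k$, so taking $n$ large does not exclude $c_0$; what prevents mixing is the typing ($\kappa(c_0)$ involves only the type $s$, which no color in $C_\iota\cup C_\tau$ touches), and paths made entirely of $c_0$-edges still deserve a word. Second, and more seriously, your lower bound $2^{n+1}\,\dist_W(\bar u,\bar v)-O(1)$ only makes sense when both endpoints are $\vtype$-typed with $\bar u\neq\bar v$: a $C$-typed endpoint does not project to a vertex of $W$, its successors spread through the whole $n$-fold expanded cell, and it is precisely for such endpoints (your ``degenerate case'') that an argument is needed. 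You explicitly defer this (``one must track \dots\ to confirm''), but it is the heart of the theorem: it is the analogue of the paper's second case, where the decisive observation is that the middle portion of the path is the barycentric subdivision of a single edge, so every path between successors of the two endpoints must traverse that expanded cell, costing at least $4$ edges there plus one on each side, hence total length at least $6>4$. Without an argument of this kind (and a justification that a $\kappa$-compatible path can indeed be realized inside some $\bary{W}$---for instance, a $C$-typed vertex met by two $c_\iota$-colored path edges cannot be), the proposal is an outline rather than a proof.
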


\begin{proof}
Let $\ERS$ be an expanding ERS and consider the VERS $\R_\ERS$.
Assume that $\Gamma$ is a $\kappa$-compatible path of length $4$, say that it is $v_0 \xrightarrow{e_1} \cdots \xrightarrow{e_4} v_4$.
Since it is $\kappa$-compatible, the vertices $v_0, \dots, v_4$ alternate between the type $\vtype$ and types $c \in C$, so let us distinguish between two possibilities.

Assume that $v_0$ has type $\vtype$, so $v_2$ and $v_4$ have type $\vtype$ too whereas $v_1, v_3$ have types $c_1, c_2 \in C$, respectively.
Then $\Gamma$ is the barycentric subdivision of a path $\tilde\Gamma$ of length $2$, whose two edges have colors $c_1$ and $c_2$.
Consider the ERS expansion of $\tilde\Gamma$:
since $\ERS$ is expanding, the ERS expansions of either of the two edges of $\tilde\Gamma$ either disconnects its initial and terminal vertices or strictly increases their distance.
Hence, if one of the edges of $\tilde\Gamma$, when expanded, has its initial and terminal vertices disconnected, then its full expansion cannot feature a geodesic between $v_0$ and $v_4$.
If instead none of the edges, when expanded, have their initial and terminal vertices disconnected, then the length of $\tilde\Gamma$ at least doubles.
Either way, a geodesic joining the endpoints of the path $\tilde\Gamma$ cannot have length $2$.
By \cref{prop:ERS:to:VERS}, the VERS expasion of $\Gamma$ is the barycentric subdivision of the ERS expansion of $\tilde\Gamma$, so the expansion of $\Gamma$ cannot feature geodesics of length less than $5$ joining successors of its endpoints.

Assume instead that $v_0$ has type $c_1 \in C$, so $v_2, v_4$ have types $c_2, c_3 \in C$, respectively, whereas $v_1$ and $v_3$ have type $\vtype$.
In this case, we cannot argue that $\Gamma$ is the barycentric subdivision of a graph.
However, the subgraph $v_1 \to v_2 \to v_3$ is the barycentric subdivision of a graph $\tilde{\Gamma}$ consisting of single edge $e$.
As in the previous paragraph, the ERS expansion of $e$ strictly increases the distance between the two vertices or disconnects them.
Then the same holds for the graph $\Gamma$:
its VERS expansion either sees $v_1$ and $v_3$ disconnected or their distance increases.
Using \cref{prop:ERS:to:VERS}, since the ERS is expanding and by construction of the VERS $\R_\ERS$, there is at least one edge in the VERS expansion of $\Gamma$ that is incident on $v_1V$ and on $v_0x$ for some $x \in E(X_{c_1})$ and one that is incident on $v_3V$ and on $v_4y$ for some $y \in E(X_{c_3})$.
The vertices $v_0x$ and $v_3y$ of the VERS expansion of $\Gamma$ cannot be joined by any path that does not involve the expansion of $e$, so ultimately any path between them has length at least $6$, as needed.
\end{proof}

The converse of \cref{thm:expanding:ERS:is:expanding:VERS} does not hold, since the notion of expansivity of ERSs requires each edge replacement to immediately ``expand'', instead of allowing for multiple steps.
For example, consider an ERS $\ERS$ with colors blue and red that replaces each red edge with a blue edge and each blue edge with a path of two red edges.
It is not expanding according to \cref{def:expanding:ERS} (it is, in a certain sense, ``expanding in $2$ steps''), but the VERS $\R_\ERS$ is expanding.

\subsection{The limit space of an ERS is the Gromov boundary of a VERS}

For the next proofs, recall \cref{def:asymptotic:relation,,def:VERS:limit:space}.

\begin{proposition}
\label{prop:gluing:implies:asymptotic}
Fix an ERS $\ERS$ and the associated VERS $\R_\ERS$.
If $\alpha, \beta \in \Omega$ are equivalent under the gluing relation of $\ERS$, then they are equivalent under the asymptotic relation of $\R_\ERS$.
\end{proposition}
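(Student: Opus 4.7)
The plan is to associate to each $\omega = e_1 e_2 \cdots \in \Omega_\ERS$ an infinite geodesic ray in $\AT_\ERS$ starting from $\varepsilon$, and then to verify that $\alpha \sim \beta$ forces the two resulting rays to stay uniformly close. The key identification, which sets up everything, relies on \cref{cor:ERS:to:VERS:full:expansion}: since $\Gamma_n = \bary{\E_n}$, the midpoint of each edge of $\E_n$ is a vertex of $\Gamma_n$, and unpacking the construction of $\R_\ERS$ one level at a time shows that the prefix $e_1 \cdots e_n$, viewed as an edge of $\E_n$, corresponds to the vertex $e_n e_{n-1} \cdots e_1 \in \Lang\V$ of $\Gamma_n$. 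Indeed, for $n=1$ the edge $e_1 \in E(X_0)$ appears in $\Sigma$ as an edge $s \to \col(e_1)$, so $e_1 \in \Lang\V$ is the midpoint vertex of $\bary{X_0} = \Gamma_1$; inductively, expanding the midpoint vertex $e_n \cdots e_1$ of $\Gamma_n$ yields vertices of the form $e_{n+1} e_n \cdots e_1$, which are precisely the midpoints of the edges $e_1 \cdots e_{n+1}$ of $\bary{\E_{n+1}}$.

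Set $a_0 \coloneqq \varepsilon$ and $a_n \coloneqq e_n \cdots e_1 \in \Lang\V$ for $n \geq 1$. Consecutive terms are joined by a vertical edge of $\AT$, and the resulting sequence is a geodesic ray because horizontal edges of $\AT$ preserve tree-depth while vertical edges change it by $\pm 1$, so every path from $\varepsilon$ to a depth-$n$ vertex has length at least $n$ --- a lower bound realised by the descending vertical path itself.

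Now let $\alpha = x_1 x_2 \cdots$ and $\beta = y_1 y_2 \cdots$ satisfy $\alpha \sim \beta$, and let $(a_n)$, $(b_n)$ be the associated geodesic rays. For each $n$, the defining property of the gluing relation gives either $x_1 \cdots x_n = y_1 \cdots y_n$, whence $a_n = b_n$, or else these two edges of $\E_n$ share a common vertex $v \in V(\E_n)$. In the latter case, $v$ is also a vertex of $\bary{\E_n} = \Gamma_n$, and the two barycentric-subdivision edges joining $v$ to the midpoints $a_n$ and $b_n$ form a horizontal path of length $2$ in $\Gamma_n$, hence in $\AT$. Thus $d_{\AT}(a_n, b_n) \leq 2$ for every $n$, which is exactly asymptotic equivalence with constant $D = 2$.

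The only genuinely delicate step is the bookkeeping in the first paragraph: one must simultaneously track the order reversal between ERS prefixes $e_1 \cdots e_n$ and VERS words $e_n \cdots e_1$, and distinguish, within $\bary{\E_n}$, original vertices from edge-midpoints. Once that identification is firmly in place, the conclusion is forced by the elementary fact that adjacent edges in any graph have midpoints at distance exactly $2$ in the barycentric subdivision.
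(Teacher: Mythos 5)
Your argument is correct and follows essentially the same route as the paper: gluing-equivalence makes equal-length prefixes equal or adjacent in $\E_n$, and since $\Gamma_n = \bary{(\E_n)}$ (\cref{cor:ERS:to:VERS:full:expansion}) the corresponding vertices are at distance at most $2$, giving asymptotic equivalence with $D=2$. (One cosmetic remark: the word reversal you track is unnecessary here, since in the ERS section the paper switches to right-infinite words read left to right, so prefixes of $\Omega$ are literally vertices of $\Gamma_n$.)
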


\begin{proof}
If $\alpha$ and $\beta$ are equivalent under the gluing relation, each of their prefixes of equal length $i$ are the same or they are adjacent in $\E_i$.
By \cref{cor:ERS:to:VERS:full:expansion}, these same prefixes are vertices at distance at most $2$ in $\Gamma_i$ for all $i$, so $\alpha$ and $\beta$ are in the same asymptotic class.
\end{proof}

\begin{lemma}
\label{lem:asymptotic:classes:prefix:distance}
Assume that $\ERS$ is an expanding ERS and let $\alpha = a_1 a_2 \ldots$ and $\beta = b_1 b_2 \ldots$ be asymptotically equivalent elements of $\V$.
Then there exists $N \in \{0,1,2\}$ such that $\dist_{\Gamma_k}(a_1 \ldots a_k, b_1 \ldots b_k) = N$ for all $k$ large enough.
\end{lemma}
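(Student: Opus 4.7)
I set $u_k := a_1 \ldots a_k$, $v_k := b_1 \ldots b_k$, and $h_k := \dist_{\Gamma_k}(u_k, v_k)$, and aim to show that $h_k$ is non-decreasing, bounded, and eventually constant with value in $\{0,1,2\}$. By \cref{thm:expanding:ERS:is:expanding:VERS,thm:expanding:VERS:hyperbolic}, $\AT_{\R_\ERS}$ is hyperbolic, so \cref{thm:Kaimano} bounds the sizes of its geodesic squares.

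First, any horizontal geodesic of length $h_k$ in $\Gamma_k$ lifts edge by edge via the spanning lift (\cref{def:spanning:lift}) to a horizontal walk of the same length in $\Gamma_{k-1}$ joining $u_{k-1}$ to $v_{k-1}$, so $h_{k-1} \le h_k$ by \cref{rmk:lift:distance}. Boundedness of $h_k$ follows from the asymptotic hypothesis $\dist_{\AT_{\R_\ERS}}(u_k, v_k) \le D$ combined with hyperbolicity: an $\AT$-geodesic between $u_k$ and $v_k$ can be put in a normal form that ascends at most $D/2$ levels, traverses a horizontal segment of bounded length, and descends; iteratively expanding this horizontal segment back down to level $k$ connects $u_k$ to $v_k$ in $\Gamma_k$ along a path of bounded length. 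Being monotonic and bounded, $h_k$ stabilizes to some $N$. Then for $k$ at least $N$ larger than the stabilization level, iterated spanning lifts of a length-$N$ horizontal geodesic at level $k$ are themselves length-$N$ horizontal geodesics at every intermediate level (each lift cannot shrink by \cref{rmk:lift:distance} and cannot grow by monotonicity), and together with the vertical geodesics on the sides they form a geodesic $N$-square in $\AT_{\R_\ERS}$.

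The core step is to prove that $\AT_{\R_\ERS}$ admits no geodesic $N$-square when $N \ge 3$. In any such square the top side is a length-$N$ horizontal geodesic in $\Gamma_k = \bary{\E_k}$ (\cref{cor:ERS:to:VERS:full:expansion}). The bipartite structure of $\bary{\E_k}$ on the parts $V(\E_k)$ and $E(\E_k)$ forces its two endpoints to be a vertex–edge pair, two vertices, or two edges, and in each case the closest underlying vertices of $\E_k$ on each side are at $\E_k$-distance at least $1$. Since $\ERS$ is expanding (\cref{def:expanding:ERS}), $\dist_{X_c}(\iota_c, \tau_c) \ge 2$ for every replacement graph, and a path-decomposition argument yields the distance-doubling inequality $\dist_{\E_{k+j}}(p, q) \ge 2^j \dist_{\E_k}(p, q)$ for all distinct $p, q \in V(\E_k)$: any $\E_{k+1}$-path between them splits into segments each crossing a replacement from $\iota_c$ to $\tau_c$ and hence contributing at least $2$ to the length. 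Because V-type descendants coincide with the original vertex and E-type descendants remain trapped inside the iterated replacement of the original edge, the bottom of the hypothetical $N$-square would need to join points whose nearest underlying vertices are at $\E_{k+N}$-distance of order $2^N$, forcing its $\bary{\E_{k+N}}$-length to be at least of order $2^N$, which for $N \ge 3$ vastly exceeds $N$ — a contradiction.

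The main obstacle is executing this last step cleanly in each of the three type configurations for the endpoints of the top side, and in particular handling descendants of E-type vertices whose endpoints may be new interior vertices of an iterated replacement rather than persistent vertices inherited from $\E_k$.
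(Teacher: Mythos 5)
There is a genuine gap, and it sits in your boundedness step. You claim that from an $\AT_{\R_\ERS}$-geodesic between $u_k=a_1\ldots a_k$ and $v_k=b_1\ldots b_k$ (ascend, bounded horizontal segment, descend) one can ``iteratively expand this horizontal segment back down to level $k$'' and so connect $u_k$ to $v_k$ in $\Gamma_k$ by a path of bounded length. Expansion does not work this way: the expansion of a horizontal path joining $u_m$ to $v_m$ at some higher level $m$ need not contain \emph{any} path joining the designated successors $u_{m+1}$ and $v_{m+1}$ — for an expanding ERS the expansion of an edge either strictly lengthens the connection between its endpoints or destroys it altogether (the replacement graph may disconnect $\iota_c$ from $\tau_c$), and over $k-m$ levels the distance between the specific prefixes can grow without control or become infinite. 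Indeed, bounded $\AT$-distance only forces a \emph{nearby ancestor} level to have small horizontal distance; it does not bound $\dist_{\Gamma_k}(u_k,v_k)$ itself, which is exactly what the lemma is trying to establish. Since your whole architecture (monotone $+$ bounded $\Rightarrow$ stabilizes to $N$, then build a geodesic $N$-square and exclude $N\geq 3$) rests on this boundedness, the proof does not go through as written. Moreover, your ``core step'' — that $\AT_{\R_\ERS}$ has no geodesic $N$-squares for $N\geq 3$ — cannot be outsourced to hyperbolicity (\cref{thm:Kaimano} only bounds square size), and your doubling sketch is left unfinished precisely in the cases you acknowledge: endpoints of the horizontal geodesics that are barycentric vertices of edge type, whose descendants are new interior vertices of iterated replacements rather than persistent vertices of $\E_k$, so the inequality $\dist_{\E_{k+j}}(p,q)\geq 2^j\dist_{\E_k}(p,q)$ for old vertices does not directly apply to them.

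The paper's argument is much shorter and avoids both problems. Monotonicity of $h_k=\dist_{\Gamma_k}(a_1\ldots a_k,b_1\ldots b_k)$ via \cref{rmk:lift:distance} is the same as yours; but instead of proving boundedness first, one shows directly that $h_k\geq 3$ at some level is incompatible with asymptotic equivalence: since $\Gamma_k=\bary{(\E_k)}$ by \cref{cor:ERS:to:VERS:full:expansion}, any path of length at least $3$ between the two prefixes contains a subpath $v \leftrightarrow e \leftrightarrow w \leftrightarrow f$ with $v,w$ vertices and $e,f$ edges of $\E_k$, and expanding $e$ (the ERS being expanding, \cref{def:expanding:ERS}) strictly lengthens or destroys the connection between $v$ and $w$; applied to every path, this gives $h_{k+1}>h_k$ once $h_k\geq 3$, so $h_k$ increases strictly forever and the rays cannot be asymptotically equivalent. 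Hence $h_k\leq 2$ for all $k$, and being non-decreasing it is eventually constant in $\{0,1,2\}$. If you want to salvage your approach, you should replace the boundedness paragraph by this dichotomy (which then makes the square construction and the $N\geq 3$ exclusion unnecessary).
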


\begin{proof}
The sequence $\dist_{\Gamma_k}(a_1 \ldots a_k, b_1 \ldots b_k)$ is non-decreasing by \cref{rmk:lift:distance}.
Hence, it suffices to show that, if there exists $k$ such that $\dist_{\Gamma_k}(a_1 \ldots a_k, b_1 \ldots b_k) \geq 3$, then $\alpha$ and $\beta$ are not asymptotically equivalent.
Assuming that $\dist_{\Gamma_k}(a_1 \ldots a_k, b_1 \ldots b_k) \geq 3$, there is a path of length at least $3$ joining $a_1 \ldots a_k$ and $b_1 \ldots b_k$ in $\Gamma_k$.
Since $\Gamma_k$ is the barycentric subdivision of $\E_k$ by \cref{cor:ERS:to:VERS:full:expansion}, the path must include a subpath $v \leftrightarrow e \leftrightarrow w \leftrightarrow f$, where $v,w$ represent vertices of $\E_k$ and $e,f$ represent edges.
Since $\ERS$ is an expanding ERS, the expansion of $e$ produces a strictly longer path between $v$ and $w$ or no path at all joining them.
This applies to any path between $a_1 \ldots a_k$ and $b_1 \ldots b_k$, so $\dist_{\Gamma_{k+1}}(a_1 \ldots a_{k+1}, b_1 \ldots b_{k+1}) > \dist_{\Gamma_k}(a_1 \ldots a_k, b_1 \ldots b_k)$, showing that $\alpha$ and $\beta$ are not asymptotically equivalent. 
\end{proof}

For the following proofs, let us keep in mind that every element of $\V$ either ends with $\overline{V}$ (so its prefixes eventually represent the same vertex) or does not feature the symbol $V$ at all (in which case all of its prefixes represent edges).

\begin{lemma}
\label{lem:ERS:non:trivial:asymptotic:classes}
If $\ERS$ is an expanding ERS, then each non-trivial asymptotic class includes some $\nu \in \V$ that ends with $\overline{V}$ and some $\epsilon \in \V$ that does not feature $V$.
\end{lemma}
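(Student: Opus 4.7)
The plan is to exploit the bipartite structure of $\Gamma_k = \bary{\E_k}$ given by \cref{cor:ERS:to:VERS:full:expansion}: vertex-type prefixes (those ending in $V$) correspond to vertices of $\E_k$, while edge-type prefixes correspond to edges of $\E_k$. Every edge of $\bary{\E_k}$ joins a vertex of $\E_k$ to an edge of $\E_k$, so paths alternate and two prefixes at $\Gamma_k$-distance $1$ have opposite types, whereas two prefixes at $\Gamma_k$-distance $2$ share the same type.

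The key observation will be that two distinct vertex-type elements $\nu, \nu'$ cannot be asymptotically equivalent. Indeed, by \cref{lem:asymptotic:classes:prefix:distance} the sequence $\dist_{\Gamma_k}(\nu_k, \nu'_k)$ is eventually some $N \in \{1, 2\}$, which must equal $2$ by bipartiteness. Then $\nu_k$ and $\nu'_k$ are distinct vertices of $\E_k$ joined by an edge $e$, but by the third clause of \cref{def:expanding:ERS} the replacement $X_{\col(e)}$ contains no edge from $\iota_{\col(e)}$ to $\tau_{\col(e)}$, so the $\E_{k+1}$-distance between $\nu_k$ and $\nu'_k$ (viewed in $\E_{k+1}$) is at least $2$, forcing $\dist_{\Gamma_{k+1}}(\nu_{k+1}, \nu'_{k+1}) \geq 4$ and contradicting $N = 2$. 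Since a non-trivial class has at least two elements but at most one vertex-type element, it must contain an edge-type element.

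For the vertex-type element, I will assume that $\alpha \in [\alpha]$ is edge-type (otherwise we are done) and pick $\beta \in [\alpha] \setminus \{\alpha\}$. If their eventual $\Gamma_k$-distance is $1$, then $\beta$ is vertex-type and we are done. Otherwise it is $2$, so both $\alpha$ and $\beta$ are edge-type and for $k \geq k_0$ the edges $\alpha_k, \beta_k$ of $\E_k$ share a common vertex. Let $V_k \subseteq V(\E_k)$ be the non-empty, finite set of common vertices of $\alpha_k$ and $\beta_k$. The key step is to show $V_{k+1} \subseteq V_k$ under the natural inclusion $V(\E_k) \hookrightarrow V(\E_{k+1})$: a common vertex $u$ of $\alpha_{k+1}$ and $\beta_{k+1}$ in $\E_{k+1}$ must lie in both edge-replacements of $\alpha_k$ and $\beta_k$, and since distinct replacements meet only at shared boundary vertices of the original edges, $u$ must be an endpoint of both $\alpha_k$ and $\beta_k$, hence $u \in V_k$.

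With this nested sequence of finite sets, $V_\infty \coloneqq \bigcap_{k \geq k_0} V_k$ is non-empty; picking $v \in V_\infty$ represented in $\Gamma_{k_0}$ by some word $w$ of type $\vtype$, I will define $\nu \coloneqq w V V V \ldots \in \V$. At every level $k \geq k_0$ the prefix $\nu_k = w V^{k - k_0}$ represents the same vertex $v \in V_k$, which is an endpoint of the edge $\alpha_k$ in $\E_k$, so $\nu_k$ is adjacent to $\alpha_k$ in $\bary{\E_k} = \Gamma_k$. Thus $\nu \in [\alpha]$ is vertex-type, completing the proof. The main obstacle is the inclusion $V_{k+1} \subseteq V_k$, whose verification relies on the combinatorial fact that different edge-replacements share only the identified boundary vertices of the original edges.
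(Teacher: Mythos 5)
Your proof is correct, but its first half takes a genuinely different route from the paper. For the element that does not feature $V$, the paper argues constructively: starting from a ray $\nu$ ending in $\overline{V}$, it uses the first clause of \cref{def:expanding:ERS} (no isolated vertices) to build, level by level, an infinite word of edges each incident on the corresponding prefix of $\nu$, which is then an element of the class at horizontal distance $1$ from $\nu$. You instead deduce existence indirectly: using the third clause (no edge joining $\iota_c$ and $\tau_c$ in $X_c$) together with \cref{lem:asymptotic:classes:prefix:distance} and the bipartite structure of $\bary{\E_k}$, you show that a class can contain at most one element ending with $\overline{V}$, so a non-trivial class automatically contains an element without $V$. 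Your argument is shorter and never invokes the no-isolated-vertices condition, at the price of being non-constructive; it also anticipates the uniqueness statement that the paper only proves later, inside \cref{prop:asymptotic:implies:gluing}, by essentially the same expansion argument. For the element ending in $\overline{V}$, your route coincides with the paper's (two $V$-free rays at eventual horizontal distance $2$ correspond to adjacent edges of $\E_k$ at every level, and a shared endpoint stabilizes), but your nested-intersection argument $V_{k+1}\subseteq V_k$ with $\bigcap_{k\geq k_0} V_k\neq\emptyset$ is a more careful rendition: the paper's phrasing tacitly assumes that the common vertex at level $k$ is the one surviving to level $k+1$, which your treatment justifies correctly even when the two edges share both endpoints (parallel edges).
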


\begin{proof}
Let $\xi$ be a non-trivial asymptotic class.
Let us first assume that it includes a $\nu$ that ends with $\overline{V}$.
Recall that the ERS $\ERS$ features no isolated vertices, because it is expanding (\cref{def:expanding:ERS}).
Then every $\E_k$ (for all $k$ large enough) features edges that are incident on the vertex represented by the $k$-th prefix of $\nu$.
Each such edge of $\E_{k+1}$ comes from an expansion of an edge in $\E_k$ with the same property (that of being incident on a prefix of $\nu$), so it is easy to build an element of $\V$ whose every long enough prefix is an edge that is incident on the prefix of $\nu$ with the same length.
Such element of $\V$ never features the symbol $V$ and each prefix is at distance $1$ with the prefix of $\nu$, so it is asymptotically equivalent to $\nu$, as needed.\

Assume now that $\xi$ includes an $\alpha$ that does not feature the symbol $V$.
Since $\xi$ is non-trivial, it features some element $\beta$ other than $\alpha$.
If $\beta$ features the symbol $V$ then it must end with $\overline{V}$ and there is nothing to prove, so let us assume that $\beta$ does not feature the symbol $V$.
By \cref{lem:asymptotic:classes:prefix:distance}, every long enough prefix of $\alpha$ and $\beta$ is at distance $2$ (the distance cannot always be $0$ or $\alpha$ and $\beta$ would be the same, and it cannot be odd as the prefixes represent edges in barycentric subdivisions).
In terms of graphs of the ERS, if $\alpha=a_1 a_2 \dots$ and $\beta=b_1 b_2 \dots$, this means that $a_1 \dots a_k$ and $b_1 \dots b_k$ are adjacent edges of $\E_k$ for all $k$ large enough, by \cref{cor:ERS:to:VERS:full:expansion}.
Note that, since $\ERS$ is expanding, if one assumes that $a_1 \dots a_k$ and $b_1 \dots b_k$ are both incident on $x_1 \dots x_m V^{k-m}$ and that $a_1 \dots a_k a_{k+1}$ and $b_1 \dots b_k b_{k+1}$ are adjacent, then $a_1 \dots a_k a_{k+1}$ and $b_1 \dots b_k b_{k+1}$ must be incident on the representative $x_1 \dots x_m V^{k+1-m}$ for the same vertex.
In particular, this means that there is some $\nu = x_1 \dots x_m \overline{V}$ whose long enough prefixes have distance $1$ from those of $\alpha$ and $\beta$, so $\nu \in \xi$ as needed.
\end{proof}

\begin{proposition}
\label{prop:asymptotic:implies:gluing}
If $\ERS$ is an expanding ERS, for each non-trivial asymptotic class $\xi$ there exists a unique $w \in \Lang\V$ that does not feature $V$ and such that
$$\xi = \{ w\overline{V} \} \cup \{ w x_1 x_2 \ldots \mid \forall k \in \mathbb{N},\ \dist(w x_1 \ldots x_k, wV^k) = 1 \}.$$
\end{proposition}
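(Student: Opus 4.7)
My plan is as follows. For existence of $w$, I apply \cref{lem:ERS:non:trivial:asymptotic:classes}: since $\xi$ is non-trivial, it contains some $\nu$ ending with $\overline{V}$, and writing $\nu = w\overline{V}$ with $w \in \Lang\V$ the initial segment before the $V$-tail gives a candidate $w$ not featuring $V$. For uniqueness, I would assume $w_1\overline{V}, w_2\overline{V} \in \xi$ both satisfy the conditions and compare their $k$-th prefixes $w_i V^{k-|w_i|}$ inside $\Gamma_k = \bary{\E_k}$ (using \cref{cor:ERS:to:VERS:full:expansion}). Both prefixes are $\vtype$-typed, and since any barycentric subdivision is bipartite between $\vtype$-typed and edge-typed vertices, their distance in $\Gamma_k$ is always even. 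Combined with the bound from \cref{lem:asymptotic:classes:prefix:distance}, the stable distance $N$ between these prefixes lies in $\{0,2\}$.

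The case $N = 2$ is ruled out by the expanding hypothesis: it would correspond to an edge of $\E_k$ joining the two underlying vertices, whose expansion yields no direct $\iota_c$-to-$\tau_c$ edge in the replacement (\cref{def:expanding:ERS}), and no expansion of any other edge of $\E_k$ can create a direct edge between the same pair of vertices either; hence the distance in $\Gamma_{k+1}$ jumps to at least $4$, contradicting stabilization. Thus $N=0$, the prefixes coincide as words of $\Lang\V$, and $w_1 = w_2$.

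With $w$ pinned down, I prove the set equality in two directions. The $\supseteq$ inclusion is straightforward: any sequence $\alpha = wx_1 x_2 \ldots$ satisfying $\dist(wx_1\ldots x_k, wV^k) = 1$ for all $k$ stays within bounded distance of $w\overline{V}$ at every level, so $\alpha \sim w\overline{V}$ and $\alpha \in \xi$. For the $\subseteq$ inclusion, take $\alpha \in \xi$ with $\alpha \neq w\overline{V}$. If $\alpha$ featured $V$, it would necessarily stabilize as $\overline{V}$ (since $V$ is the only edge leaving $\vtype$ in $\Sigma$), giving $\alpha = u\overline{V}$ for some $u$ not featuring $V$; the uniqueness of $w$ would force $u = w$, a contradiction. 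So $\alpha$'s prefixes are all edge-typed, while $w\overline{V}$'s prefixes are $\vtype$-typed from level $|w|$ onwards. By bipartiteness their distance is odd, and by \cref{lem:asymptotic:classes:prefix:distance} it is at most $2$, hence exactly $1$ for every such level.

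The main obstacle will be extracting the explicit presentation $\alpha = wx_1 x_2 \ldots$ from this distance condition. Since $w$ ends in a vertex-letter while $\alpha$ is edge-typed, interpreting the juxtaposition $wx_1 x_2 \ldots$ requires care: my strategy is to use \cref{rmk:lift:distance} to propagate the distance-$1$ condition down the tree, and then exploit the fact that each edge-typed vertex of $\bary{\E_{|w|+k}}$ is adjacent to exactly its two endpoint $\vtype$-typed vertices, so the adjacency $wx_1 \ldots x_k \leftrightarrow wV^k$ at every level forces a coherent identification of $\alpha$'s prefixes with the edges of $\E_{|w|+k}$ incident on the vertex represented by $w$. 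This combinatorial alignment, together with the shift dynamics of $\V$, should yield the explicit form claimed.
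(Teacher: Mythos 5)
Your argument for the mathematical substance is the same as the paper's: existence of the $\overline{V}$-tailed representative via \cref{lem:ERS:non:trivial:asymptotic:classes}; uniqueness by combining \cref{cor:ERS:to:VERS:full:expansion} and \cref{lem:asymptotic:classes:prefix:distance} with the parity of distances in a barycentric subdivision and the expanding condition (a stable distance $2$ would correspond to an edge of $\E_k$ joining the two vertices, whose expansion --- and no other expansion --- could join them again directly, so the distance jumps, contradicting stabilization); and the identification of the remaining elements of $\xi$ as the $V$-free words whose prefixes lie at distance exactly $1$ from the prefixes $wV^k$ (odd by parity, at most $2$ by the lemma, hence $1$). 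This is precisely how the paper argues, and it is also where the paper's proof \emph{stops}.

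The one step you leave open --- ``extracting the explicit presentation $\alpha = w x_1 x_2 \ldots$'' --- is not something you can close, because the literal reading is false. Since $w\overline{V}\in\V$ forces $\type(w)=\vtype$, and the only letter of $\Sigma$ readable after a $\vtype$-typed word is the loop $V$, a word $wx_1x_2\ldots$ with $x_1\neq V$ is not even in $\V$; and elements of $\xi$ other than $w\overline{V}$ genuinely need not have $w$ as a prefix. For instance, in the basilica ERS of \cref{ex:ERS:basilica}, the class of the base vertex contains $\nu_0\overline{V}$ together with addresses such as $L\overline{0}$, whose prefixes $L0^k$ are edges of $\E_{k+1}$ incident on that vertex but do not begin with the letter $\nu_0$. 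The displayed formula is an abuse of notation: the intended content, and all that the paper's own proof establishes, is that $\xi$ consists of $w\overline{V}$ together with those $V$-free elements of $\V$ whose length-$(|w|+k)$ prefixes are at distance $1$ from $wV^k$ for every $k$ --- which is exactly what your parity argument already delivers. So you should drop the planned ``combinatorial alignment'' step (it would be chasing a statement that fails) and instead record that the juxtaposition in the display is to be read in this prefix-distance sense; with that reading, your proof is complete and coincides with the paper's.
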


\begin{proof}
Assume that $\xi$ is a non-trivial asymptotic class.
By \cref{lem:ERS:non:trivial:asymptotic:classes}, it must feature a $\nu$ that ends with $\overline{V}$ and an $\epsilon$ that does not feature the symbol $V$, say $\nu=x_1 \dots x_k \overline{V}$ (with $x_k \neq V$) and $\epsilon=e_1 e_2 \dots$ (with $e_i \neq V$ for all $i$).

Let us first show that $\nu$ is the only element of $\xi$ that ends with $\overline{V}$.
Assume by contradiction there exists some $\nu' = y_1 \ldots y_m \overline{V} \in \xi$ other than $\nu$.
If $M = \mathrm{max}\{k,m\}$, then the prefixes of length $M$ of $\nu$ and $\nu'$ represent vertices of $\E_M$ that are joined by some edge $e$, by \cref{cor:ERS:to:VERS:full:expansion,,lem:asymptotic:classes:prefix:distance}.
Since the ERS is expanding, as one expands the edge $e$, the distance between $\nu$ and $\nu'$ must increase (possibly becoming infinite if $\nu$ and $\nu'$ end in different connected components).
This contradicts the fact that the distance between the prefixes of $\nu$ and $\nu'$ is bounded, so $\nu'$ cannot belong to the same asymptotic class of $\nu$.
Hence, $\nu$ must be the sole element of $\xi$ ending with $\overline{V}$.

In order to conclude the proof, by \cref{cor:ERS:to:VERS:full:expansion} it only remains to note that the elements of $\xi$ have long enough prefixes at distance at most $1$ from those of $\nu$.
This holds because of \cref{lem:asymptotic:classes:prefix:distance} together with the previously proved fact that $\xi$ does not feature elements ending with $\overline{V}$ other than $\nu$ and the observation that the prefixes of elements that do not feature the symbol $V$ can only have odd distance with the prefixes of $\nu$ by \cref{cor:ERS:to:VERS:full:expansion}.
\end{proof}

\begin{theorem}
\label{thm:ERS:limit:space:is:VERS:limit:space}
The limit space of an expanding ERS $\ERS$ is homeomorphic to thelimit space of the VERS $\R_\ERS$.
\end{theorem}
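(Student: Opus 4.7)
The plan is to build a natural map from the ERS limit space to the VERS limit space, verify it is a continuous bijection, and conclude by compactness.

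By inspection of $\Sigma$, the vertex $\vtype$ has only the loop $V$ as an outgoing edge, so any element of $\V$ that ever reaches $\vtype$ must thereafter be constantly $V$. Hence $\V$ splits into those elements that never feature $V$ and those ending with $\overline V$; the former match $\Omega_\ERS$ exactly, since their prefixes are edges of $\E_n$ via the identification $\Gamma_n = \bary{\E_n}$ of \cref{cor:ERS:to:VERS:full:expansion}. This yields a canonical inclusion $\iota\colon\Omega_\ERS \hookrightarrow \V$, and composing with the asymptotic-class quotient $q\colon\V\twoheadrightarrow\partial\AT_{\R_\ERS}$ from \cref{def:VERS:limit:space}, \cref{prop:gluing:implies:asymptotic} ensures that $q\circ\iota$ is constant on gluing classes and hence descends to a well-defined map $\overline\phi\colon\Omega_\ERS/{\sim}\to\partial\AT_{\R_\ERS}$.

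Next I would verify that $\overline\phi$ is bijective. For injectivity, if $\alpha,\beta\in\Omega_\ERS$ are asymptotically equivalent, either the class is trivial (so $\alpha=\beta$) or, by \cref{prop:asymptotic:implies:gluing}, both have the form $w x_1 x_2\dots$ with every prefix at distance $1$ from $wV^k$ in $\Gamma_k$; then the length-$k$ prefixes of $\alpha$ and $\beta$ are at distance at most $2$ in $\bary{\E_k}$, forcing them to represent equal or adjacent edges of $\E_k$, which is exactly the gluing condition. For surjectivity, given an asymptotic class $\xi$, if some element of $\xi$ does not feature $V$ then that element lies in $\Omega_\ERS$; otherwise $\xi$ contains some $\nu=w\overline V$, and the explicit construction in the first paragraph of the proof of \cref{lem:ERS:non:trivial:asymptotic:classes} (which requires only the no-isolated-vertices property from expansiveness) produces an $\epsilon\in\V$ not featuring $V$ whose every prefix is at distance $1$ from the corresponding prefix of $\nu$, so $\epsilon\in\xi\cap\Omega_\ERS$.

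The remaining and most delicate step will be continuity. By the universal property of the quotient topology, it suffices to show $q\circ\iota\colon\Omega_\ERS\to\partial\AT_{\R_\ERS}$ is continuous, the idea being that convergence $\alpha^{(n)}\to\alpha$ in the cylinder topology of $\Omega_\ERS$ means that length-$k$ prefixes stabilise, which in turn forces the Gromov product at the root of the corresponding asymptotic classes to grow without bound and thus gives visual-topology convergence on $\partial\AT_{\R_\ERS}$. Since $\Omega_\ERS/{\sim}$ is compact Hausdorff by \cite{BF19}, and $\partial\AT_{\R_\ERS}$ is too because $\R_\ERS$ is an expanding VERS by \cref{thm:expanding:ERS:is:expanding:VERS} and so has a hyperbolic history graph by \cref{thm:expanding:VERS:hyperbolic}, the continuous bijection $\overline\phi$ will be a homeomorphism. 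The main obstacle I anticipate is writing the continuity argument cleanly; the IFS section handled an analogous step by importing essentially verbatim the proof of \cite[Theorem 4.3]{SSIFS}, and I expect the same template to transport here.
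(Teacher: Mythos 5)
Your proposal follows essentially the same route as the paper's proof: the map is induced by the inclusion $\Omega_\ERS \hookrightarrow \V$, well-definedness comes from \cref{prop:gluing:implies:asymptotic}, injectivity from \cref{prop:asymptotic:implies:gluing}, surjectivity from the adjacent-ray construction behind \cref{lem:ERS:non:trivial:asymptotic:classes}, and the homeomorphism is obtained via a continuous bijection between compact Hausdorff spaces. The only cosmetic difference is that the paper dispatches continuity by noting the inclusion $\Omega \to \V$ is continuous on cylinders and passing to the quotients, rather than running a Gromov-product argument in the style of \cite{SSIFS}.
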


\begin{proof}
Let $\Omega$ and $X$ be the symbol space and the limit space of the ERS $\ERS$, respectively, and let $\pi_\Omega \colon \Omega \to X$ be the quotient map for the gluing relation.
Let $Y$ be the Gromov boundary of the history graph $\AT_{\R_\ERS}$ for the VERS $\R_\ERS$ and let $\pi_\AT \colon \V \to Y$ be the quotient map for the asymptotic relation.
Consider the injective map $\Phi \colon \Omega \to \V$ that sends each $e_1 e_2 \ldots \in \Omega$ to $e_1 e_2 \ldots \in \V$.
It is continuous, since the topology of $\V$ is generated by the set of cylinders and the preimage of a cylinder of $\V$ is a clopen subset of $\Omega$.

Let $\tilde{\Phi} \colon X \to Y$ be defined by setting $\tilde{\Phi}(\pi_\Omega(\alpha)) = \pi_\AT(\alpha)$ for all $\alpha \in \Omega$.
This really defines a map, since $\pi_\Omega(\alpha)=\pi_\Omega(\beta)$ implies $\pi_\AT(\alpha)=\pi_\AT(\beta)$ by \cref{prop:gluing:implies:asymptotic}.
Since $\Phi$ is continuous, passing to the quotient we have that $\tilde{\Phi}$ is continuous too.
It now suffices to show that $\tilde{\Phi}$ is bijective, since $X$ and $Y$ are compact and Hausdorff.

Given any $\alpha \in \V$ let us show that $\pi_\AT(\alpha) \in \tilde{\Phi}(X)$.
If all the digits of $\alpha$ are edges of the base and replacement graphs $X_0$ and $X_c$ ($c \in C$), then $\alpha \in \Omega$ and so $\tilde{\Phi}\left(\pi_\Omega(\alpha)\right) = \pi_\AT(\alpha)$.
If instead $\alpha$ features a digit $V$, then $\alpha = a \overline{V}$ for some finite (possibly empty) prefix $a$ whose digits are all edges of $X_0$ and $X_c$ ($c \in C$).
In this case, consider any $a \beta \in \Omega$ whose digits are all edges of $X_0$ and $X_c$ ($c \in C$) and such that, for all $k \in \mathbb{N}$ larger than the length of $a$, the $k$-th prefix of $a \beta$ is adjacent to the vertex corresponding to $a V$ in the ERS.
Now, all the prefixes of $a \overline{V}$ and $a \beta$ of equal lengths have distance at most $1$, so $\pi_\AT(a \overline{V})=\pi_\AT(a \beta)$.
And $a\beta$ is as in the previous case, so $\pi_\AT(a \overline{V}) = \pi_\AT(a \beta) = \tilde{\Phi}\left(\pi_\Omega(a\beta\right)$.
Hence, $\tilde{\Phi}$ is surjective.
Injectivity follows from \cref{prop:asymptotic:implies:gluing}, so we are done.
\end{proof}

As an consequence, we recover \cite[Theorem 1.25]{BF19}:
limit spaces of expanding ERSs are compact and metrizable.

\begin{remark}
The replacement systems of \cite{ESS} deal with directed hypergraphs:
instead of edges, with two boundary vertices $\iota(e)$ and $\tau(e)$, one has \textit{hyper}edges, which have any finite amount of boundary vertices $\lambda_1(e), \dots, \lambda_k(e)$.
One may consider the following notion of barycentric subdivision of hypergraphs:
$V(\bary{\Gamma}) = V(\Gamma) \cup E(\Gamma)$ (as for graphs) and each hyperedge $e$ of $\Gamma$ with boundary vertices $\lambda_1, \dots, \lambda_k$ is replaced by the hyperedges $e_1,\dots,e_k$ of $\bary{\Gamma}$, where each $e_i$ has boundary vertices $\lambda_1', \dots, \lambda_k'$, with $\lambda_j' = \lambda_j$ for all $j \neq i$ and $\lambda_i' = e$.
It looks like the arguments from this section can be generalized in this direction, but doing so would require developing a notion of vertex and \textit{hyper}edge replacement systems, which is beyond the scope of this paper and would significantly extend its length.

Moreover, \cite{ESS} introduces \textbf{\textit{almost} expanding} (hyper)edge replacement systems, where colors are allowed to not expand when the edges they color are isolated (i.e., they never share vertices with other edges).
This allows for limit spaces with isolated points.
The VERS associated to an almost expanding edge replacement system may not be expanding in the sense of \cref{def:expanding:VERS}, but it can be proved that the history graph is hyperbolic because the ``problematic'' geodesics (that generate big squares) never occur in the actual VERS expansions.
This is the same phenomenon discussed in \cref{rmk:expanding:not:iff}.
\end{remark}


\section*{Acknowledgements}

The authors are thankful to Alex Bishop and Alexander Teplyaev for useful discussions and to Rachel Skipper and the University of Utah for their hospitality.

\printbibliography[heading=bibintoc]

\end{document}